\documentclass[twoside,11pt]{amsart}
\usepackage{indentfirst,latexsym,bm}
\usepackage{amsfonts}
\usepackage{amssymb}
\usepackage{amsmath}
\usepackage{amsbsy}
\usepackage{dsfont}
\usepackage{amsthm}
\usepackage{hyperref}
\usepackage{amscd}
\usepackage[all]{xy}
\usepackage{chemarrow}
\usepackage{multirow}
\allowdisplaybreaks[4]
\usepackage[titletoc]{appendix}

\newcommand{\K}{\mathds{k}}

\newcommand{\G}{\mathbf{G}}

\newcommand{\Pp}{\mathcal{P}}

\newcommand{\fA}{\mathfrak{A}}

\newcommand\Aut{\operatorname{Aut}}

\newcommand\Ker{\operatorname{Ker}}

\newcommand\id{\operatorname{id}}

\newcommand{\Hom}{\operatorname{Hom}}
\newcommand\ord{\operatorname{ord}}

\begin{document}
	
	\title[Exact Factorizations of Rank-One Hopf Algebras I]{Exact Factorizations of Rank-One Pointed Hopf Algebras in Positive Characteristic, I}
	\author{Rongchuan Xiong}
	\address{Department of Mathematics, Changzhou University, Changzhou 213164, China}
	\email{rcxiong@foxmail.com}
	
	\makeatletter
	\@namedef{subjclassname@2020}{\textup{2020} Mathematics Subject Classification}
	\makeatother
	
	\subjclass[2020]{16T05, 16S35, 18D10}
	\date{}
	
	\begin{abstract}
		We classify exact factorizations of the third-type rank-one pointed Hopf algebras over an algebraically closed field of positive characteristic.
		
		The main step is a complete classification of matched pairs between such an algebra and a group algebra. It turns out that the group-like part must form a matched pair of finite groups, while the only possible action on the skew-primitive generator is a shift by a scalar multiple of $1-g$, where $g$ is the distinguished group-like element, encoded by a single group homomorphism. The bicrossed product is again a third-type rank-one pointed Hopf algebra, and we give a necessary and sufficient condition for two such products to be isomorphic as Hopf algebras.
		
		Consequently, up to interchanging the two factors, exact factorizations of a fixed third-type algebra correspond bijectively to exact factorizations of its underlying finite group, with the distinguished group-like element lying in the rank-one factor.
		
		As an application, all matched pairs between the Radford algebra and cyclic group algebras are classified, and the corresponding exact factorizations are determined explicitly when the cyclic group has prime order.

		\medskip
		\noindent\textbf{Keywords:} Exact factorization; matched pair; bicrossed product; rank-one pointed Hopf algebra; Radford algebra; positive characteristic.
	\end{abstract}

	\maketitle
	
	\newtheorem{question}{Question}
	\newtheorem{defi}{Definition}[section]
	\newtheorem{conj}{Conjecture}
	\newtheorem{thm}[defi]{Theorem}
	\newtheorem{lem}[defi]{Lemma}
	\newtheorem{pro}[defi]{Proposition}
	\newtheorem{cor}[defi]{Corollary}
	\newtheorem{rmk}[defi]{Remark}
	\newtheorem{example}{Example}[section]

	\section{Introduction}
	A Hopf algebra $E$ is said to admit an \emph{exact factorization} through Hopf subalgebras $B$ and $T$ if the multiplication map
	\[
	B\otimes T\longrightarrow E,\qquad b\otimes t\longmapsto bt,
	\]
	is bijective. When $B$ and $T$ are regarded as fixed Hopf subalgebras of $E$, we write $E=BT$. By Majid's factorization theorem \cite[Proposition~3.12]{Majid90} (see also\cite[Theorem~7.2.3]{Majid95}), $E$ factors through $B$ and $T$ if and only if $E$ is isomorphic to a bicrossed product associated with a matched pair of $B$ and $T$. Thus, the factorization problem reduces to classifying the relevant matched pairs and then determining whether the corresponding bicrossed products are isomorphic to the Hopf algebra under consideration.
	
	Bicrossed products have been studied in many concrete settings. Agore, Bontea, and Militaru \cite{ABM14} developed a systematic approach to their classification. Subsequent work has treated, among other examples, bicrossed products of two Sweedler Hopf algebras \cite{Bontea14}, two Taft algebras \cite{Agore18-Taft}, Taft algebras with group algebras \cite{Agore18-Cn,Agore-Nastasescu17}, $H_8$ with $H_4$ \cite{Lu-Ning-Wang20}, and generalized Taft algebras with group algebras \cite{Wang-Cheng-Lu22}.
	
    In positive characteristic, Scherotzke \cite{Scherotzke} classified finite-dimensional pointed rank-one Hopf algebras generated as algebras by the first term of the coradical filtration. The classification consists of three types. The third type has no counterpart in characteristic zero \cite{KropRadford}. In this type, the conjugation action of the distinguished group-like element on the skew-primitive generator is not semisimple; it is of additive type. This paper is the first of a series devoted to exact factorizations of rank-one pointed Hopf algebras in positive characteristic. It treats this third type. The remaining two types have semisimple conjugation action on the distinguished skew-primitive element. They will be addressed separately.

	Over an algebraically closed field, this third type can be normalized; the resulting Hopf algebra is denoted by $\mathfrak A^3_G(g,f)$, where $G$ is a finite group, $g\in Z(G)$ is the distinguished group-like element, and $f:G\to(\K,+)$ is a group homomorphism with $f(g)=1$, subject to the compatibility condition imposed by the defining relation $x^p=x$; see Definition~\ref{def:fA3}. The basic example is the Radford algebra \cite{Radford77}, which is the unique noncommutative noncocommutative pointed Hopf algebra of dimension $p^2$ over $\K$ \cite{WW14}.

	In this paper we determine all exact factorizations of the normalized third-type algebras $\mathfrak A^3_G(g,f)$. The main technical step is a complete classification of matched pairs between a group algebra and a third-type rank-one pointed Hopf algebra. Let
	\[
	A=\K[\Gamma],\qquad H=\mathfrak A^3_G(g,f),
	\]
	where $\Gamma$ is finite. For every matched pair $(A,H,\triangleright,\triangleleft)$, the distinguished generators satisfy the rigidity relations: for $h\in\Gamma$, 
	\[
	g\triangleright h=h,\qquad x\triangleright h=0,\qquad g\triangleleft h=g.
	\]
	The right action on $x$ is necessarily
	\[
	x\triangleleft h=x+\gamma_h(1-g)
	\]
	for a uniquely determined additive parameter $\gamma_h$. The classification is summarized as follows.
	
	\begin{thm}[Matched-pair classification]\label{thm:intro-matchedpair}
		Let $A=\K[\Gamma]$ and $H=\mathfrak A^3_G(g,f)$. Matched pairs $(A,H,\triangleright,\triangleleft)$ are in bijection with the following data:
		\begin{enumerate}
			\item a matched pair of finite groups $(\Gamma,G,\triangleright,\triangleleft)$ such that
			\[
			g\triangleright h=h,\qquad g\triangleleft h=g,\qquad \forall h\in\Gamma;
			\]
			\item a group homomorphism $\gamma:\Gamma\to \K_g^+$, where
			\[
			\K_g=\{a\in\K\mid (a^p-a)(1-g^p)=0\},
			\]
			satisfying
			\[
			f(t\triangleleft h)=f(t)+\gamma_{t\triangleright h}-\gamma_h,
			\qquad \forall t\in G,\ \forall h\in\Gamma.
			\]
		\end{enumerate}
		For these data,
		\[
		\K[\Gamma]\bowtie\mathfrak A^3_G(g,f)
		\cong
		\mathfrak A^3_{\Gamma\bowtie G}(g,F),
		\qquad
		F(h,t)=f(t)-\gamma_h.
		\]
	\end{thm}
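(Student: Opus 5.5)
The classification will rest on the standard description of a matched pair of Hopf algebras $(A,H,\triangleright,\triangleleft)$: $A$ is a left $H$-module coalgebra via $\triangleright$, $H$ is a right $A$-module coalgebra via $\triangleleft$, and the compatibilities
\[
t\triangleright(ab)=(t_{(1)}\triangleright a_{(1)})\bigl((t_{(2)}\triangleleft a_{(2)})\triangleright b\bigr),\qquad
(ts)\triangleleft a=\bigl(t\triangleleft(s_{(1)}\triangleright a_{(1)})\bigr)(s_{(2)}\triangleleft a_{(2)})
\]
hold together with the usual twist condition. I would first exploit the coalgebra structure. Since $\triangleright$ is a coalgebra map $H\otimes A\to A$ and $A=\K[\Gamma]$ is cosemisimple, group-likes of $H\otimes A$ go to group-likes. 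Hence $G$ acts on $\Gamma$ by permutations. Likewise $\triangleleft$ maps $G\times\Gamma$ into $G(H)=G$. Restricting the compatibilities to group-likes then gives a matched pair of groups $(\Gamma,G)$.

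Next I would treat $x$. The pair $x\otimes h$ is $(g\otimes h,1\otimes h)$-skew primitive in $H\otimes A$, so $x\triangleright h$ is $(g\triangleright h,h)$-skew primitive in $\K[\Gamma]$. Skew primitives in a group algebra are scalar multiples of differences of group-likes, so $x\triangleright h=\lambda(g\triangleright h-h)$. Similarly $x\triangleleft h$ is $(g\triangleleft h,1)$-skew primitive in $H$. By the known description of skew primitives of $\mathfrak A^3_G(g,f)$, this forces either $g\triangleleft h=g$ and $x\triangleleft h=\mu x+\gamma(1-g)$, or $x\triangleleft h\in\K(1-g\triangleleft h)$.

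To obtain the rigidity relations I would then use the module-algebra-type compatibilities applied to the defining relations $tx=xt+f(t)\,x$-type relations, $x^p-x=\ldots$ and $g$ central. The key point is that $x\triangleleft h$ must be nonzero modulo group-likes. The reason is that $\triangleleft h$ is invertible, with inverse $\triangleleft h^{-1}$, so $\mu\neq0$. In particular $g\triangleleft h=g$.

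Applying $\triangleleft h$ to $gx=xg$ and $x^p=x+(\text{group-like term})$, and comparing with $\mu^p=\mu$ and the additive-type relation $gxg^{-1}=x+(1-g)$, should force $\mu=1$. I would derive $g\triangleright h=h$ from $(gx)\triangleright h=(xg)\triangleright h$ combined with centrality. Then $x\triangleright h=\lambda(g\triangleright h-h)=0$.

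The relation $x^p=x+c(1-g^p)$, expanded after acting, yields the condition $(\gamma_h^p-\gamma_h)(1-g^p)=0$, that is, $\gamma_h\in\K_g$. The compatibility $(xy)\triangleleft a$ for $y=x\cdot$ group-like gives $\gamma_{hk}=\gamma_h+\gamma_k$, using that $x\triangleright h=0$ and $g\triangleright h=h$; so $\gamma$ is a homomorphism. Finally, acting on the relation $txt^{-1}=x+f(t)(1-g)$ (the normalized commutation) and using $t\triangleleft(\cdot)$ gives exactly
\[
f(t\triangleleft h)=f(t)+\gamma_{t\triangleright h}-\gamma_h .
\]

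I expect this step, getting all relations of Definition~\ref{def:fA3} and their images right, to be the main computational obstacle. The risk lies especially in tracking the Sweedler terms $t_{(1)}\triangleright a_{(1)}$ when $t=x$.

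For the converse, I would define the actions by these formulas on generators and verify that they extend. Well-definedness comes down to checking that the defining relations are preserved, which is exactly the conditions listed. The compatibilities then hold because they hold on generators and are multiplicative.

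Lastly, in $\K[\Gamma]\bowtie H$ I would set $X=x$ and observe
\[
(h\bowtie 1)(1\bowtie x)(h^{-1}\bowtie1)=x\triangleleft h^{-1}\text{-conjugate}=x-\gamma_h(1-g).
\]
So $X$ is $(g,1)$-skew primitive, $g$ is central in $\Gamma\bowtie G$, and conjugation by $(h,t)$ shifts $X$ by $(f(t)-\gamma_h)(1-g)$. A dimension count together with the universal property of $\mathfrak A^3_{\Gamma\bowtie G}(g,F)$ then gives the isomorphism, with $F$ a homomorphism precisely by the displayed cocycle condition.
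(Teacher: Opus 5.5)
Your forward direction has a genuine gap at the rigidity step $g\triangleright h=h$, $x\triangleright h=0$. Everything afterwards depends on it: the simplification of (MP3) when you apply $\triangleleft h$ to the relations of $H$, the compatibility $f(t\triangleleft h)=f(t)+\gamma_{t\triangleright h}-\gamma_h$, and the cross relation $xh=h(x\triangleleft h)$ in the bicrossed product.

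You derive it from $(gx)\triangleright h=(xg)\triangleright h$ and ``centrality''. But $g$ is not central in $H$: $gx-xg=g(1-g)\neq0$, as your own formula $gxg^{-1}=x+(1-g)$ shows. With the correct relation, write $\tau=g\triangleright(-)$ and $x\triangleright h=\lambda_h(\tau h-h)$. You then only obtain $(\lambda_h-\lambda_{\tau h}+1)(\tau^2h-\tau h)=0$, i.e.\ $\lambda_{\tau h}=\lambda_h+1$ along $\tau$-orbits. This permits nontrivial orbits of length divisible by $p$.

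In fact the module-coalgebra structure alone cannot force rigidity, already for the Radford algebra. Suppose $g$ permutes $\Gamma$ with an orbit $h_0,\dots,h_{p-1}$ (indices mod $p$), and set $x\triangleright h_i=(\lambda_0+i)(h_{i+1}-h_i)$. This respects $gx-xg=g(1-g)$, respects $x^p=x$ (the operator has characteristic polynomial $t^p-t$), and is compatible with $\Delta$.

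The missing idea is the twist condition (MP4) at $(x,h)$. With $q=g\triangleright h$ it reads $(x\triangleleft h)\otimes(h-q)=(1-g\triangleleft h)\otimes(x\triangleright h)$. If $q\neq h$, then $x\triangleright h\in\K(h-q)$, and cancelling $h-q$ gives $x\triangleleft h\in\K(1-g\triangleleft h)\subseteq\K[G]$. This contradicts exactly the fact you already use to get $\mu\neq0$: $\triangleleft h$ is bijective and maps $\K[G]$ onto itself. Hence $q=h$ and $x\triangleright h\in\Pp_{h,h}(A)=0$.

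Two smaller corrections in the same passage:
\begin{itemize}
\item $\mu=1$ comes from applying $\triangleleft h$ to $gx-xg=g(1-g)$; the relation $x^p=x$ only gives $\mu\in\mathbb F_p^\times$.
\item Additivity of $\gamma$ is just the module law $(x\triangleleft h)\triangleleft k=x\triangleleft(hk)$ together with $(1-g)\triangleleft k=1-g$, not a consequence of (MP3).
\end{itemize}

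Your converse is only sketched. ``The compatibilities hold because they hold on generators'' needs real work. You must first define $\triangleleft h$ on all of $H$ (e.g.\ on the PBW basis via (MP3)), prove it is well defined and a right module coalgebra structure, and then check (MP2)--(MP4).

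The paper avoids all of this, and your last paragraph already contains the ingredients:
\begin{itemize}
\item Verify that $g=(1,g)$ is central in $L=\Gamma\bowtie G$.
\item Verify that $F(h,t)=f(t)-\gamma_h$ is a homomorphism; this is your displayed condition.
\item Verify that $(F(\ell)^p-F(\ell))(1-g^p)=0$, using $\gamma(\Gamma)\subseteq\K_g$, and $f(G)\subseteq\mathbb F_p$ when $g^p\neq1$. Then $\fA^3_L(g,F)$ is defined.
\item The PBW basis shows that multiplication $\K[\Gamma]\otimes H\to\fA^3_L(g,F)$ is bijective.
\item Majid's theorem then yields a matched pair.
\item The cross relations $th=(t\triangleright h)(t\triangleleft h)$ and $xh=h\bigl(x+\gamma_h(1-g)\bigr)$ show that its actions are the prescribed ones.
\end{itemize}
Organized this way, your identification of the bicrossed product doubles as the existence proof.
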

	
	In particular, every bicrossed product appearing in Theorem~\ref{thm:intro-matchedpair} again belongs to the normalized third-type family. Since many previously studied examples have \(G=\langle g\rangle\), the left action is automatically trivial there; for general \(G\), however, it need not be (see Remark~\ref{rmk:nontrivial-left-action}).
	
	We also obtain an isomorphism criterion.
	
	\begin{thm}[Isomorphism classification]\label{thm:intro-isom}
		Let $(L,F)$ and $(L',F')$ be the group/function data attached to two bicrossed products in Theorem~\ref{thm:intro-matchedpair}. Then
		\[
		\mathfrak A^3_L(g,F)\cong\mathfrak A^3_{L'}(g,F')
		\]
		as Hopf algebras if and only if there is a group isomorphism $\Theta:L\to L'$ such that
		\[
		\Theta(g)=g,\qquad F'\circ\Theta=F.
		\]
		For matched pairs with trivial left action, the factor-preserving version reduces to the conjugacy condition in Theorem~\ref{thm:isomorphism} below.
	\end{thm}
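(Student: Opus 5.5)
The plan is to use the two standard invariants of a pointed Hopf algebra: its group of group-likes and its spaces of skew-primitives. Throughout I use the presentation of Definition~\ref{def:fA3}. $\mathfrak A^3_L(g,F)$ is generated by the group $L$ and an element $x$, with $\Delta(x)=x\otimes 1+g\otimes x$. The commutation relation is $txt^{-1}=x+F(t)(1-g)$ for $t\in L$, together with the relation $x^p=x$ (up to the normalized correction term prescribed there). The set $\{tx^i: t\in L,\ 0\le i<p\}$ is a basis. If my recollection of the commutation relation is off by a convention (for example $t^{-1}xt$ instead of $txt^{-1}$), the computation below changes only by a sign or by replacing $\Theta$ with $\Theta^{-1}$, and the conclusion is unaffected.

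\emph{Sufficiency.} Suppose $\Theta:L\to L'$ is a group isomorphism with $\Theta(g)=g$ and $F'\circ\Theta=F$. Define $\phi$ on generators by $\phi(t)=\Theta(t)$ and $\phi(x)=x'$. Conjugation by $\Theta(t)$ sends $x'$ to $x'+F'(\Theta(t))(1-g)=x'+F(t)(1-g)$, so the commutation relation is preserved. The relation $x^p=x$ (with its correction term) is preserved because $\Theta(g)=g$ fixes $g^p$. Hence $\phi$ is a well-defined algebra map. It is a coalgebra map because it sends $g\mapsto g$ and $x\mapsto x'$, so it respects $\Delta(x)=x\otimes1+g\otimes x$. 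Finally, $\phi$ maps the basis $\{tx^i\}$ bijectively onto the basis $\{t'x'^i\}$, so it is an isomorphism.

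\emph{Necessity.} Let $\phi$ be a Hopf algebra isomorphism.
\begin{enumerate}
\item Since $\phi$ preserves group-likes, it restricts to a group isomorphism $\Theta:L\to L'$.
\item I will compute the skew-primitive spaces in $\mathfrak A^3_{L'}(g,F')$ using the coradical filtration. Its first term is $\K L'\oplus \K L'x'$, so for $a\in L'$ the space $P_{a,1}$ of $(a,1)$-primitives is $\K(1-a)$ when $a\neq g$. For $a=g$ it is $\K x'\oplus\K(1-g)$.
\item In $\mathfrak A^3_L(g,F)$ the element $x$ is a nontrivial $(g,1)$-primitive. Its image $\phi(x)$ is therefore a nontrivial $(\Theta(g),1)$-primitive. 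By step 2 this forces $\Theta(g)=g$ and $\phi(x)=\alpha x'+\beta(1-g)$ with $\alpha\neq0$.
\item Apply $\phi$ to $txt^{-1}=x+F(t)(1-g)$. The left side becomes $\alpha x'+\alpha F'(\Theta(t))(1-g)+\beta(1-g)$ and the right side becomes $\alpha x'+\beta(1-g)+F(t)(1-g)$. Comparing gives $\alpha F'(\Theta(t))=F(t)$ for all $t$.
\item Take $t=g$. Since $F(g)=F'(g)=1$, this gives $\alpha=1$, and then step 4 reads $F'\circ\Theta=F$.
\end{enumerate}

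The main obstacle is the skew-primitive computation in step 2. It needs the standard argument: write a skew-primitive in the basis $\{tx^i\}$, apply $\Delta$, and observe that terms with $i\ge2$ produce $\binom{i}{j}$-type cross terms that cannot cancel. This uses $g\neq1$, which follows from $f(g)=1$. The remark on trivial left actions then follows by restricting $\Theta$ to the factors and applying Theorem~\ref{thm:isomorphism}.
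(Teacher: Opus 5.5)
Your proposal is correct and follows essentially the same route as the paper. You restrict the isomorphism to group-likes, use the description of $\Pp_{1,g}$ (Lemma~\ref{lem:primitive-H}) to force $\Theta(g)=g$ and $\phi(x)=\alpha x'+\beta(1-g)$, normalize $\alpha=1$ via the relation for $g$ (the paper does this inside Lemma~\ref{lem:aut-H}), and compare the commutation relations; the converse is $\Theta$ together with $x\mapsto x'$. Three small remarks: your relation $txt^{-1}=x+F(t)(1-g)$ is exactly the paper's $tx-xt=F(t)t(1-g)$ because $g$ is central, and the power relation is simply $x^p=x$ with no correction term. For the factor-preserving clause you should spell out, as the paper does, that $\Theta=\psi\times\sigma$ respecting the bicrossed multiplication gives $\sigma\circ\phi_k=\phi'_{\psi(k)}\circ\sigma$, while $F'\circ\Theta=F$ gives $f\circ\sigma=f$ and $\gamma'\circ\psi=\gamma$.
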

	
	This classification also determines all exact factorizations of a fixed third-type algebra.
	
	\begin{thm}[Exact factorization structure]\label{thm:intro-factorization}
		Let $H=\mathfrak A^3_G(g,f)$. Up to interchanging the two factors, exact factorizations $H=BT$ by proper Hopf subalgebras are in bijection with nontrivial exact factorizations
		\[
		G=G_B\Gamma,
		\qquad G_B\cap\Gamma=\{1\},
		\qquad g\in G_B,
		\qquad \Gamma\ne\{1\}
		\]
		of the underlying group. The corresponding Hopf subalgebras are
		\[
		B=\mathfrak A^3_{G_B}\bigl(g,f|_{G_B}\bigr),
		\qquad
		T=\K[\Gamma].
		\]
	\end{thm}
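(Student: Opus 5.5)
The plan is to first classify the Hopf subalgebras of $H=\mathfrak A^3_G(g,f)$, and then combine that classification with Theorem~\ref{thm:intro-matchedpair} and the dimension count that an exact factorization forces.

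\emph{Step 1: Hopf subalgebras.} $H$ is pointed with coradical $\K[G]$. A Hopf subalgebra $K\subseteq H$ is again pointed, with group of group-likes $G_K=G(K)\le G$. Its first term of the coradical filtration is spanned by $\K[G_K]$ together with the $(1,g)$-skew primitives lying in $K$. Those skew primitives are spanned by $x$ and $1-g$, up to right translation by group-likes. So either $K=\K[G_K]$, or $x\in K$. In the second case $g\in G_K$, and $K=\mathfrak A^3_{G_K}(g,f|_{G_K})$ by the PBW-type basis $\{tx^i\}$. The one point needing care is that no twisted element $x+\lambda(1-g)$ escapes this description; this holds because any such element together with $g$ generates $x$.

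\emph{Step 2: the factorization.} Let $H=BT$ be exact with $B,T$ proper. Since $\dim H=p|G|$ and $\dim B\cdot\dim T=\dim H$, at most one factor contains $x$, and if neither did then $BT\subseteq \K[G]$. So exactly one factor, say $B$ after interchanging, is of the form $\mathfrak A^3_{G_B}(g,f|_{G_B})$, and $T=\K[\Gamma]$. Comparing dimensions gives $p|G_B|\,|\Gamma|=p|G|$.

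Bijectivity of multiplication restricts to group-likes as follows. The elements $t\gamma$ with $t\in G_B$ and $\gamma\in\Gamma$ are linearly independent images of independent tensors, so $G_B\cap\Gamma=\{1\}$. The dimension count then gives $G=G_B\Gamma$. Properness of $T$ forces $\Gamma\neq\{1\}$. Properness of $B$ holds automatically, since $\Gamma\ne 1$ means $G_B\ne G$.

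Conversely, take an exact factorization $G=G_B\Gamma$ with $g\in G_B$. The subalgebras $B=\mathfrak A^3_{G_B}(g,f|_{G_B})$ and $T=\K[\Gamma]$ are Hopf subalgebras. The map $B\otimes T\to H$ sends the basis $\{t x^i\otimes\gamma\}$ to $\{t x^i\gamma\}$. Using $x\gamma=\gamma(\gamma^{-1}x\gamma)$ together with the conjugation rule for $x$ in Definition~\ref{def:fA3}, which sends $x$ to $x$ plus a multiple of $1-g$, a triangularity argument in the filtration by powers of $x$ shows these elements form a basis. Hence the map is bijective.

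\emph{Step 3: bijectivity of the correspondence.} The factorization is determined by its pair of subgroups $(G_B,\Gamma)$, and conversely, so the correspondence is a bijection. The main obstacle is Step 1: ruling out Hopf subalgebras whose skew-primitive part is not exactly $x$ up to translation. I would settle it by computing $P_{1,g}(H)$ directly from the relations, in particular that $x^p=x+\ldots$ prevents new primitives. Then Step 2 uses Theorem~\ref{thm:intro-matchedpair} only as a consistency check: the bicrossed product $\K[\Gamma]\bowtie B$ is recovered with $F$ matching $f$.
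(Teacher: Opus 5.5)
Your strategy is the paper's: split Hopf subalgebras of $H$ into group algebras $\K[G_K]$ and algebras containing $x$ (Lemma~\ref{lem:subalgebra}), use exactness plus a dimension count, and prove the converse by triangularity of $tx^i h=th\,x^i+(\text{lower})$. There is, however, a real gap in Step 1, at ``$K=\mathfrak A^3_{G_K}(g,f|_{G_K})$ by the PBW-type basis''.

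The PBW basis only shows that the subalgebra generated by $G_K$ and $x$ is spanned by $\{tx^i\mid t\in G_K,\ 0\le i<p\}$, has dimension $p|G_K|$, and lies in $K$. It does not explain why $K$ cannot contain further elements $\sum_t c_t\,tx^i$ with $i\ge1$ and some $t\notin G_K$. Your Step 2 derives $G=G_B\Gamma$ from $\dim B=p|G_B|$, so the argument is incomplete exactly there.

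The paper closes this in the last part of the proof of Lemma~\ref{lem:subalgebra}. For $\bar y=\sum_t c_t\,t\bar x^n\in(\operatorname{gr}K)_n$, the bidegree-$(n,0)$ component of $\Delta(\bar y)$ is $\sum_t c_t\,t\bar x^n\otimes t$. It must lie in $(\operatorname{gr}K)_n\otimes\K[G_K]$, which forces $c_t=0$ for $t\notin G_K$, so $\dim K=p|G_K|$.

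Alternatively, you can repair it inside your own Step 2. The multiplication map $B\otimes T\to H$ is a coalgebra map, so when bijective it is a coalgebra isomorphism. It therefore restricts to a bijection $\G(B\otimes T)=G_B\times\Gamma\to G$, $(s,h)\mapsto sh$. This gives $G=G_B\Gamma$ directly, not just $G_B\cap\Gamma=\{1\}$. Then $\dim B=p|G|/|\Gamma|=p|G_B|$ shows that $B$ equals the subalgebra generated by $G_B$ and $x$.

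Two smaller points. First, in Step 2 the claim that at most one factor contains $x$ does not follow from dimensions. Take $G=\langle g\rangle\times\langle s\rangle$ with $\ord(g)=p$, $\ord(s)=p^2$, $f(g^is^j)=i$; then $B=T=\mathfrak A^3_{\langle g\rangle}(g,f|_{\langle g\rangle})$ has $\dim B\cdot\dim T=p^4=\dim H$. The correct reason is injectivity of multiplication: $x\otimes1$ and $1\otimes x$ have the same image, so $B\cap T=\K1$, which is what the paper uses.

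Second, you place the main obstacle in computing $\Pp_{1,g}(H)$, but that is the routine part (Lemma~\ref{lem:primitive-H}). Besides the reverse inclusion above, the substantive content of Step 1 is that $K\not\subseteq\K[G]$ forces a nontrivial skew-primitive in $K$. You use this tacitly. It holds because $K_1=K_0$ implies $K=K_0$ for any coalgebra, and the paper proves it by a minimal-degree argument in $\operatorname{gr}K$. Theorem~\ref{thm:intro-matchedpair} plays no role in the proof.
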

	
	Thus the rank-one factor is obtained by restricting the datum to a subgroup
	containing $g$, while the other factor is a group algebra. The canonical
	matched pair and the classification of exact factorizations up to Hopf
	automorphism are given in Section~\ref{sec:internal} and
	Corollary~\ref{cor:factorization-iso}.
	
	Finally, we specialize the general classification to two basic examples: the Radford algebra \(R=\mathfrak A^3_{C_p}(g,f)\) and cyclic group algebras \(\K[C_n]\). We first classify all matched pairs between $R$ and $\K[C_n]$; they are parametrized by a scalar $\alpha\in\K$, with $\alpha=0$ forced when $p\nmid n$, while every scalar occurs when $p\mid n$. We also determine the unrestricted Hopf-isomorphism classes of the resulting bicrossed products $E_\alpha$; when $p\mid n$, these classes are the affine $\mathbb F_p$-orbits of $\alpha$.
	
	Using the exact factorization structure theorem, we then study the nontrivial exact factorizations of $E_\alpha$. When $n$ is prime, the list is explicit: if $n\ne p$, then $E_0$ has exactly one nontrivial exact factorization, namely
	\[
	E_0=R\cdot \K[C_n];
	\]
	if $n=p$, then every $E_\alpha$ has exactly $p$ nontrivial exact factorizations, given by
	\[
	E_\alpha=R\cdot \K[\langle hg^c\rangle],
	\qquad c\in\mathbb F_p.
	\]

	The paper is organized as follows. Section~\ref{sec:prelim} fixes conventions and recalls matched pairs, bicrossed products, Majid's theorem, the normalized third-type family, skew primitive spaces, Hopf subalgebras, and the form of Jacobson's formula used later. Section~\ref{sec:external} proves the rigidity results, classifies all matched pairs with a group algebra, and gives the isomorphism classification. Section~\ref{sec:internal} proves the exact factorization structure theorem and identifies the canonical matched pair attached to a group factorization. Section~\ref{sec:application} treats the Radford algebra and cyclic group algebras.

	\section{Preliminaries}\label{sec:prelim}
	
	\subsection*{Conventions}
	
	Throughout, $\K$ is an algebraically closed field of characteristic $p>0$. All vector spaces, algebras, coalgebras, Hopf algebras, and tensor products are over $\K$. All groups appearing below are finite unless  stated otherwise. We use Sweedler notation
	\[
	\Delta(u)=u_{(1)}\otimes u_{(2)}
	\]
	with the summation suppressed. For a Hopf algebra $C$, we denote by $\G(C)$ the set of group-like elements of $C$. For group-like elements $a,b$ of a Hopf algebra $B$, put
	\[
	\Pp_{a,b}(B)=\{z\in B\mid \Delta(z)=z\otimes a+b\otimes z\}.
	\]
	For a finite group $\Gamma$, let $\Gamma^{\mathrm{ab}}=\Gamma/[\Gamma,\Gamma]$ and set
	\[
	r_p(\Gamma):=\dim_{\mathbb F_p}\bigl(\Gamma^{\mathrm{ab}}\otimes_{\mathbb Z}\mathbb F_p\bigr).
	\]
	
	\subsection{Matched pairs, bicrossed products, and exact factorizations}
	The general theory of matched pairs and bicrossed products can be found in \cite{Takeuchi81,Majid90,Majid95}. A \emph{matched pair of Hopf algebras} is a quadruple $(A,H,\triangleright,\triangleleft)$
	consisting of a left action $\triangleright:H\otimes A\to A$ and a right action $\triangleleft:H\otimes A\to H$ such that $A$ is a left $H$-module coalgebra, $H$ is a right $A$-module coalgebra, and, for all $a,b\in A$ and $u,v\in H$,
	\begin{align}
		u\triangleright1_A&=\varepsilon_H(u)1_A,
		&1_H\triangleleft a&=\varepsilon_A(a)1_H,
		\tag{MP1}\label{eq:MP1}\\
		u\triangleright(ab)
		&=\bigl(u_{(1)}\triangleright a_{(1)}\bigr)
		\bigl((u_{(2)}\triangleleft a_{(2)})\triangleright b\bigr),
		\tag{MP2}\label{eq:MP2}\\
		(uv)\triangleleft a
		&=\bigl(u\triangleleft(v_{(1)}\triangleright a_{(1)})\bigr)
		\bigl(v_{(2)}\triangleleft a_{(2)}\bigr),
		\tag{MP3}\label{eq:MP3}\\
		u_{(1)}\triangleleft a_{(1)}\otimes u_{(2)}\triangleright a_{(2)}
		&=u_{(2)}\triangleleft a_{(2)}\otimes u_{(1)}\triangleright a_{(1)}.
		\tag{MP4}\label{eq:MP4}
	\end{align}
	The corresponding \emph{bicrossed product} $A\bowtie H$ is the tensor coalgebra $A\otimes H$, written $a\bowtie u$ on elementary tensors, with multiplication
	\begin{equation}\label{eq:bicross-product-mult}
		(a\bowtie u)(b\bowtie v)
		=a\bigl(u_{(1)}\triangleright b_{(1)}\bigr)
		\bowtie\bigl(u_{(2)}\triangleleft b_{(2)}\bigr)v.
	\end{equation}

	\begin{defi}[Exact factorization]\label{def:factor}
		A Hopf algebra $E$ is said to \emph{factor exactly through} Hopf algebras $A$ and $H$ if there are injective Hopf algebra maps $i:A\to E$ and $j:H\to E$ such that
		\[
		A\otimes H\longrightarrow E,
		\qquad a\otimes u\longmapsto i(a)j(u),
		\]
		is bijective. When $A$ and $H$ are fixed Hopf subalgebras of $E$, we simply write $E=AH$.
	\end{defi}
	
	\begin{thm} \cite[Proposition~3.12]{Majid90}\cite[Theorem~7.2.3]{Majid95}\label{thm:majid}
		Let $A$ and $H$ be Hopf algebras. A Hopf algebra $E$ factors exactly through $A$ and $H$ if and only if there exists a matched pair $(A,H,\triangleright,\triangleleft)$ such that
		\[
		E\cong A\bowtie H
		\]
		as Hopf algebras.
	\end{thm}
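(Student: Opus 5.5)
The statement is Majid's factorization theorem, so I will prove both implications directly. The plan is to verify that the bicrossed product is a Hopf algebra containing copies of $A$ and $H$ whose product map is bijective, and conversely to extract a matched pair from a given factorization.

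\emph{The bicrossed product factors.} Suppose $(A,H,\triangleright,\triangleleft)$ is a matched pair. I will work through the following steps.
\begin{enumerate}
\item Associativity of \eqref{eq:bicross-product-mult} reduces to the cases $(1\bowtie u)(b\bowtie 1)(c\bowtie 1)$ and $(1\bowtie u)(1\bowtie v)(b\bowtie 1)$. These are exactly \eqref{eq:MP2} and \eqref{eq:MP3}, together with the module-coalgebra axioms. The unit is $1\bowtie 1$ by \eqref{eq:MP1}.
\item Multiplicativity of the tensor-product comultiplication is where \eqref{eq:MP4} enters. Expanding $\Delta\bigl((1\bowtie u)(b\bowtie 1)\bigr)$ requires swapping the order of $u_{(1)}\triangleleft b_{(1)}$ and $u_{(2)}\triangleright b_{(2)}$, and \eqref{eq:MP4} is precisely this swap. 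The counit is multiplicative because both actions preserve counits.
\item The antipode is $S(a\bowtie u)=(1\bowtie S_H(u))(S_A(a)\bowtie 1)$. Since the convolution inverse of the identity is unique once the bialgebra structure is established, it suffices to check it on the generators $a\bowtie1$ and $1\bowtie u$ and then use anti-multiplicativity.
\item Then $i(a)=a\bowtie1$ and $j(u)=1\bowtie u$ are injective Hopf maps with $i(a)j(u)=a\bowtie u$, so the multiplication map is the identity on $A\otimes H$.
\end{enumerate}

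\emph{A factorization gives a matched pair.} Conversely, suppose $\mu: A\otimes H\to E$, $a\otimes u\mapsto i(a)j(u)$, is bijective. Since $i$ and $j$ are coalgebra maps, $\mu$ is a coalgebra isomorphism. Define $\Psi=\mu^{-1}\circ m_E\circ(j\otimes i): H\otimes A\to A\otimes H$, and set
\[
u\triangleright a=(\id\otimes\varepsilon)\Psi(u\otimes a),\qquad u\triangleleft a=(\varepsilon\otimes\id)\Psi(u\otimes a).
\]
The key claim is that
\[
\Psi(u\otimes a)=(u_{(1)}\triangleright a_{(1)})\otimes(u_{(2)}\triangleleft a_{(2)}).
\]
This follows because $\Psi$ is a coalgebra map, being a composite of coalgebra maps, and any coalgebra map $\phi: C\to A\otimes H$ satisfies $\phi=((\id\otimes\varepsilon)\phi\otimes(\varepsilon\otimes\id)\phi)\Delta$. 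Once this formula holds, the multiplication of $E$ transported along $\mu$ is exactly \eqref{eq:bicross-product-mult}.

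The matched-pair axioms are then recovered from properties of $E$:
\begin{itemize}
\item Associativity of $E$ applied to $j(u)j(v)i(a)$ and $j(u)i(a)i(b)$ yields the action properties, \eqref{eq:MP2} and \eqref{eq:MP3}. One applies $\mu^{-1}$ and then projects with $\varepsilon$.
\item Unitality yields \eqref{eq:MP1}.
\item Cocommutativity-free compatibility of $\Delta_E$ with the product $j(u)i(a)$ yields \eqref{eq:MP4}.
\item The module-coalgebra conditions follow from $\Psi$ being a coalgebra map.
\end{itemize}
Hence $E\cong A\bowtie H$ via $\mu$.

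The main obstacle is the bookkeeping in the verification of multiplicativity of $\Delta$ and the extraction of \eqref{eq:MP4}, since no cocommutativity is available and the Sweedler indices must be tracked carefully.
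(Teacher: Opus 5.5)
Your proof is correct and is the standard proof of this result. The paper gives no proof of its own and only quotes it from Majid, whose argument runs as yours does: the exchange map $\Psi=\mu^{-1}m_E(j\otimes i)$ is a coalgebra map; the actions are its counit projections; the axioms come from associativity, unitality and multiplicativity of $\Delta_E$; and conversely one builds the bicrossed product with $S(a\bowtie u)=(1\bowtie S_H(u))(S_A(a)\bowtie 1)$.

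One step needs tightening. The proposed $S$ is not known to be anti-multiplicative in advance, and uniqueness of convolution inverses does not help. Instead, verify $S(X_{(1)})X_{(2)}=\varepsilon(X)1=X_{(1)}S(X_{(2)})$ directly for $X=a\bowtie u=(a\bowtie 1)(1\bowtie u)$; this uses only associativity and the antipode axioms of $A$ and $H$.
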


	The following lemma follows directly from the module-coalgebra axioms in the definition of a matched pair; see, e.g., \cite[Lemma 1.3]{Agore18-Taft}
	\begin{lem}\label{lem:matched-pair-skew}
		Let $(A,H,\triangleright,\triangleleft)$ be a matched pair of Hopf algebras, $a \in \G(A)$ and $u\in \G(H)$. Then:
		\begin{enumerate}
			\item $u \triangleright a \in \G(A)$ and $u \triangleleft a \in \G(H)$;
			\item If $x \in \Pp_{1,a}(A)$, then $u \triangleleft x \in \Pp_{u,u \triangleleft a}(H)$ and $u \triangleright x \in \Pp_{1,u \triangleright a}(A)$;
			\item If $y \in \Pp_{1,u}(H)$, then $y \triangleleft a \in \Pp_{1,u \triangleleft a}(H)$ and $y \triangleright a \in \Pp_{a,u \triangleright a}(A)$.
		\end{enumerate}
	\end{lem}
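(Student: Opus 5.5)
The plan is to derive all three parts from two facts in the definition of a matched pair. First, $\triangleright$ makes $A$ a left $H$-module coalgebra. Second, $\triangleleft$ makes $H$ a right $A$-module coalgebra. Concretely, these say that for every $u\in H$ and $a\in A$
\[
\Delta_A(u\triangleright a)=(u_{(1)}\triangleright a_{(1)})\otimes(u_{(2)}\triangleright a_{(2)}),\qquad \varepsilon_A(u\triangleright a)=\varepsilon_H(u)\varepsilon_A(a),
\]
and symmetrically
\[
\Delta_H(u\triangleleft a)=(u_{(1)}\triangleleft a_{(1)})\otimes(u_{(2)}\triangleleft a_{(2)}),\qquad \varepsilon_H(u\triangleleft a)=\varepsilon_H(u)\varepsilon_A(a).
\]
Everything then reduces to substituting the coproducts of the relevant group-like and skew-primitive elements. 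The pairing axioms (MP1)--(MP4) are not needed.

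For (1), take $u\in\G(H)$ and $a\in\G(A)$, so $\Delta(u)=u\otimes u$ and $\Delta(a)=a\otimes a$. The first display gives $\Delta(u\triangleright a)=(u\triangleright a)\otimes(u\triangleright a)$, and the counit condition gives $\varepsilon(u\triangleright a)=1$. In particular $u\triangleright a\neq 0$, so it is group-like. The argument for $u\triangleleft a$ is identical, using the second display.

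For (2), take $x\in\Pp_{1,a}(A)$, so $\Delta(x)=x\otimes 1+a\otimes x$. Expanding the module-coalgebra identity for $\triangleleft$ gives
\[
\Delta(u\triangleleft x)=(u\triangleleft x)\otimes(u\triangleleft 1)+(u\triangleleft a)\otimes(u\triangleleft x).
\]
To finish we need $u\triangleleft 1=u$, which holds because $\triangleleft$ is a unital action. This yields $u\triangleleft x\in\Pp_{u,u\triangleleft a}(H)$. In the same way, using $u\triangleright 1_A=\varepsilon(u)1_A=1_A$ from (MP1),
\[
\Delta(u\triangleright x)=(u\triangleright x)\otimes 1+(u\triangleright a)\otimes(u\triangleright x),
\]
so $u\triangleright x\in\Pp_{1,u\triangleright a}(A)$.

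Part (3) is symmetric. Here $y\in\Pp_{1,u}(H)$, so $\Delta(y)=y\otimes 1+u\otimes y$, and we act on $a\in\G(A)$. We use $1_H\triangleleft a=\varepsilon(a)1_H=1_H$ from (MP1) and $1_H\triangleright a=a$ since $\triangleright$ is unital. The first display then gives $\Delta(y\triangleright a)=(y\triangleright a)\otimes a+(u\triangleright a)\otimes(y\triangleright a)$, so $y\triangleright a\in\Pp_{a,u\triangleright a}(A)$. The second display gives $\Delta(y\triangleleft a)=(y\triangleleft a)\otimes 1+(u\triangleleft a)\otimes(y\triangleleft a)$, so $y\triangleleft a\in\Pp_{1,u\triangleleft a}(H)$.

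No step is a real obstacle. The only point needing care is keeping the tensor-factor order consistent with the convention $\Delta(z)=z\otimes a+b\otimes z$ for $\Pp_{a,b}$, and remembering to invoke unitality of the actions in each case.
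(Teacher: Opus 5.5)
Your argument is correct and is exactly the direct computation from the module-coalgebra axioms that the paper relies on; the paper gives no further proof and cites Agore's Lemma~1.3. One small slip: you say (MP1)--(MP4) are not needed, yet you use (MP1) to get $u\triangleright 1_A=1_A$ and $1_H\triangleleft a=1_H$. That use is legitimate, since (MP1) is part of the definition of a matched pair, so only the remark should be corrected.
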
 
	\subsection{The third-type rank-one pointed Hopf algebras}

	\begin{defi}[The third family $\mathfrak A^3$]\label{def:fA3}
		A \emph{third-type datum} is a triple $(G,g,f)$ such that
		\begin{enumerate}
			\item $G$ is a finite group and $g\in Z(G)$;
			\item $f:G\to(\K,+)$ is a group homomorphism;
			\item $f(g)=1$;
			\item for every $t\in G$,
			\begin{equation}\label{eq:A3-data}
				\bigl(f(t)^p-f(t)\bigr)(1-g^p)=0
				\qquad\text{in }\K[G].
			\end{equation}
		\end{enumerate}
		The Hopf algebra $\mathfrak A^3_G(g,f)$ is generated by the group algebra $\K[G]$ and an element $x$, subject to
		\begin{equation}\label{eq:A3-relations}
			tx-xt=f(t)t(1-g),
			\qquad x^p=x,
		\end{equation}
		where $t\in G$, with  coalgebra structure and antipode given by
		\begin{align}
			\Delta(t)&=t\otimes t,& \varepsilon(t)&=1,& S(t)&=t^{-1},\label{eq:A3-Hopf-group}\\
			\Delta(x)&=x\otimes1+g\otimes x,& \varepsilon(x)&=0,& S(x)&=-g^{-1}x.\label{eq:A3-Hopf-x}
		\end{align}
		It has PBW basis
		\begin{equation}\label{eq:A3-PBW}
			\{t x^i\mid t\in G,\ 0\le i<p\},
		\end{equation}
		and hence
		\[
		\dim\mathfrak A^3_G(g,f)=p|G|.
		\]
	\end{defi}
	
	\begin{rmk}\label{rmk:A3-data}
		Because $f(g)=1$ and $g$ has finite order, $p\mid\ord(g)$; in particular $g\neq1$. Condition \eqref{eq:A3-data} is automatic when $g^p=1$; otherwise, it is equivalent to $f(G)\subseteq\mathbb F_p.$ The relation with the distinguished group-like is
		\begin{equation}\label{eq:g-x-relation}
			gx-xg=g(1-g).
		\end{equation}
	\end{rmk}
	
	\begin{rmk}[Radford algebra]\label{rmk:Radford-fA3}
		Take $G=C_p=\langle g\rangle$ and $f(g)=1$. Then $f(g^i)=i\in\mathbb F_p$, $g^p=1$, and $\mathfrak A^3_{C_p}(g,f)$ is the Radford algebra. In the classification of pointed Hopf algebras of dimension $p^2$, this is the unique noncommutative and noncocommutative type \cite{WW14}.
	\end{rmk}

	\begin{lem}\label{lem:primitive-group}
		Let $\Gamma$ be a finite group and $A=\K[\Gamma]$. Then
		\[
		\Pp_{1,h}(A)=
		\begin{cases}
			0,&h=1,\\
			\K(1-h),&h\neq1.
		\end{cases}
		\]
	\end{lem}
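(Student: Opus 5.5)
We write a general element as $z=\sum_{s\in\Gamma}c_s s$ with $c_s\in\K$ and compare both sides of $\Delta(z)=z\otimes 1+h\otimes z$ in the basis $\{s\otimes s'\mid s,s'\in\Gamma\}$ of $A\otimes A$. Since $\Delta(z)=\sum_s c_s\, s\otimes s$, the left-hand side only involves diagonal tensors. The right-hand side is $\sum_s c_s\, s\otimes 1+\sum_s c_s\, h\otimes s$.

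First take $s\ne 1$. The tensor $s\otimes s$ can appear on the right only in the second sum with $s=h$. So $c_s=0$ for all $s\notin\{1,h\}$.

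Next compare off-diagonal tensors. When $h\ne 1$, the tensors $h\otimes 1$ and $1\otimes 1$ are distinct. On the right, the coefficient of $h\otimes 1$ is $c_h+c_1$, coming from $s=h$ in the first sum and $s=1$ in the second. On the left this coefficient is $0$, since $h\otimes 1$ is off-diagonal. Hence $c_h=-c_1$, which gives $z=c_1(1-h)\in\K(1-h)$. Conversely, a direct check shows $\Delta(1-h)=(1-h)\otimes 1+h\otimes(1-h)$, so $\Pp_{1,h}(A)=\K(1-h)$.

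When $h=1$, the condition reads $\Delta(z)=z\otimes 1+1\otimes z$. By the first step only $c_1$ can be nonzero, so $z=c_1\cdot 1$. Comparing coefficients of $1\otimes 1$ gives $c_1=2c_1$, so $c_1=0$; this holds in every characteristic, including $p=2$. Alternatively, applying $\varepsilon\otimes\id$ gives $z=\varepsilon(z)+z$, so $\varepsilon(z)=0$, and then $z=c_1=\varepsilon(z)=0$. Either way $\Pp_{1,1}(A)=0$.

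The argument is routine coefficient comparison with no real obstacle. The only point needing care is the case $h=1$, where the $p=2$ situation is handled by the $\varepsilon$ argument.
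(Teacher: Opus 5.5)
Your proof is correct and is the paper's own argument: the paper simply says the lemma follows by comparing coefficients in the basis $\{r\otimes s\mid r,s\in\Gamma\}$, and you carry out that comparison in full. The closing remark about $p=2$ is unnecessary, since, as you already observe, $c_1=2c_1$ forces $c_1=0$ in every characteristic, so the $\varepsilon$ argument is not needed.
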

	
	\begin{proof}
		This follows immediately by comparing coefficients in the basis $\{r\otimes s\mid r,s\in\Gamma\}$.
	\end{proof}
	
	\begin{lem}\label{lem:primitive-H}
		Let $H=\mathfrak A^3_G(g,f)$. Then
		\[
		\Pp_{1,h}(H)=
		\begin{cases}
			0,&h=1,\\[2mm]
			\K(1-g)\oplus\K x,&h=g,\\[2mm]
			\K(1-h),&h\neq1,g.
		\end{cases}
		\]
		In particular, $g$ is the unique group-like element $h\neq1$ for which $\Pp_{1,h}(H)$ contains an element outside the coradical $\K[G]$.
	\end{lem}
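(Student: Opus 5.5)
We compute $\Pp_{1,h}(H)$ by working in the PBW basis \eqref{eq:A3-PBW} and using a filtration argument. The coradical of $H$ is $\K[G]$, and $H$ is a pointed Hopf algebra. Write an arbitrary element as $z=\sum_{i=0}^{p-1} z_i x^i$ with $z_i\in\K[G]$.

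First we handle degree $\ge 2$ in $x$. Since $\Delta(x)=x\otimes1+g\otimes x$, the element $x$ is $(1,g)$-primitive in this paper's convention. Hence
\[
\Delta(x^i)=\sum_{j=0}^{i}\binom{i}{j}_{\!*}\,(\text{terms in }g^{j}x^{i-j}\otimes x^{j}),
\]
up to lower-order corrections coming from the nonsemisimple commutation $gx=xg+g(1-g)$. Consider the associated graded Hopf algebra for the coradical filtration. In $\operatorname{gr}H$ the relation becomes $gx=xg$, so $\operatorname{gr}H\cong \K[G]\#\K[x]/(x^p)$ with $x$ primitive-type and trivial braiding. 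The coradical filtration is $H_n=\bigoplus_{i\le n}\K[G]x^i$. Elements of $\Pp_{1,h}(H)$ lie in $H_1$, because a skew-primitive $z$ satisfies $\Delta(z)\in H_0\otimes H+H\otimes H_0$. Therefore $z=z_0+z_1x$ with $z_0,z_1\in\K[G]$. If one prefers to avoid invoking the coradical filtration, compare the components in $\K[G]x^{i-1}\otimes \K[G]x$ for the top degree $i\ge2$: the coefficient $i\,z_i$-type term (with $i<p$ invertible) must vanish.

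Now write $z_1=\sum_t c_t t$. Then
\[
\Delta(z_1x)=\sum_t c_t\,(tx\otimes t+tg\otimes tx).
\]
Requiring $\Delta(z)-z\otimes 1-h\otimes z\in \K[G]\otimes\K[G]$ gives $\sum_t c_t\, tx\otimes t=z_1x\otimes 1$, so $c_t=0$ unless $t=1$. Thus $z_1=c\in\K$. The $\K[G]\otimes\K[G]x$ component then forces $c\,g\otimes x=h\otimes c\,x$, so $c=0$ unless $h=g$. In the remaining group-algebra part, $z_0$ must satisfy $\Delta(z_0)=z_0\otimes1+h\otimes z_0$ when $c=0$. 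The same holds when $h=g$, since $cx$ is already $(1,g)$-primitive. By Lemma~\ref{lem:primitive-group} applied to $\K[G]$, $z_0\in\K(1-h)$ for $h\ne1$ and $z_0=0$ for $h=1$.

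Assembling the cases: for $h=1$ we get $0$; for $h=g$ we get $\K(1-g)\oplus\K x$, and the sum is direct by the PBW basis; for $h\ne1,g$ we get $\K(1-h)$. The final sentence of the statement is immediate, since $x\notin\K[G]$.

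The main obstacle is the reduction to $H_1$, i.e., ruling out $x^i$ with $i\ge2$. The cleanest route is the standard fact that skew-primitives lie in $H_1$, combined with the explicit description $H_1=\K[G]\oplus\K[G]x$. The alternative is a leading-term computation of $\Delta(x^i)$ modulo lower degree: the cross term $i\,g x^{i-1}\otimes x$ survives because $1\le i<p$, and this contradicts skew-primitivity.
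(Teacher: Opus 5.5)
Your proof is correct. The paper states this lemma without proof, so there is no argument of its own to compare against. The closest analogue is the proof of Lemma~\ref{lem:subalgebra}, which uses exactly your mechanism: the filtration $H_n=\sum_{i\le n}\K[G]x^i$, together with the fact that the bidegree $(n-1,1)$ component of $\Delta$ applied to a top-degree element carries the factor $n\neq0$ in $\K$.

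You give two routes for reducing to $z=z_0+z_1x$. The direct leading-term computation is the one that actually carries the proof, because simply asserting $H_1=\K[G]\oplus\K[G]x$ already contains the content of that reduction.

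That computation works as you indicate. Suppose $z=\sum_{i\le N}z_ix^i$ with $z_N=\sum_t c_t t\neq0$ and $2\le N\le p-1$. Then the component of $\Delta(z)$ in the span of the PBW tensors $tx^{N-1}\otimes sx$ is $N\sum_t c_t\,tgx^{N-1}\otimes tx$, for three reasons:
\begin{enumerate}
\item In $(x\otimes1+g\otimes x)^N$, a word containing $k$ letters $g\otimes x$ has second tensor factor exactly $x^k$, and the reordering $xg=gx-g(1-g)$ only lowers the $x$-degree of the first factor.
\item The terms $z_ix^i$ with $i<N$ only reach bidegrees of total degree at most $i<N$.
\item The terms $z\otimes1+h\otimes z$ only have bidegrees $(\ast,0)$ and $(0,\ast)$.
\end{enumerate}
Since $N$ is invertible in $\K$, this forces $z_N=0$. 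Your degree-one analysis and the appeal to Lemma~\ref{lem:primitive-group} then finish the argument.
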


	\begin{lem}\label{lem:subalgebra}
		Let $H=\mathfrak A^3_G(g,f)$ and let $C\subseteq H$ be a Hopf subalgebra. Put
		\[
		G_C=C\cap G.
		\]
		Then exactly one of the following occurs:
		\begin{enumerate}
			\item $C\subseteq\K[G]$, in which case $C=\K[G_C]$;
			\item $C\not\subseteq\K[G]$, in which case $g\in G_C$, $x\in C$, and
			\[
			C=\mathfrak A^3_{G_C}\bigl(g,f|_{G_C}\bigr).
			\]
		\end{enumerate}
	\end{lem}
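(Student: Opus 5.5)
The plan is to use the coradical filtration of $H$ together with Lemma~\ref{lem:primitive-H}. Since $H$ is pointed with coradical $\K[G]$, the coradical of any Hopf subalgebra $C$ is $C\cap\K[G]$. This intersection is a Hopf subalgebra of $\K[G]$ spanned by group-likes, so it equals $\K[G_C]$, where $G_C=C\cap G$ is a subgroup. This places $G_C$ inside $C$ in both cases.

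\textbf{Case (1).} If $C\subseteq\K[G]$, then $C$ is a Hopf subalgebra of a group algebra. Hence $C$ is cosemisimple and pointed, and is spanned by its group-likes. The group-likes of $C$ are exactly $C\cap G=G_C$, so $C=\K[G_C]$.

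\textbf{Case (2).} Suppose $C\not\subseteq\K[G]$. Then $C$ is not cosemisimple-coradical, so $C_1\neq C_0$. By the Taft--Wilson theorem applied to the pointed Hopf algebra $C$, $C_1$ is spanned by $C_0$ together with skew-primitives $\Pp_{a,b}(C)$ lying outside $C_0$. Translating by group-likes $a^{-1}\in C$ lands in $\Pp_{1,a^{-1}b}(H)$. By Lemma~\ref{lem:primitive-H}, this forces $a^{-1}b=g$. It also yields an element $z=\lambda(1-g)+\mu x\in C$ with $\mu\neq0$.

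Since $g=a^{-1}b\in G_C$, we get $1-g\in C$, and therefore $x\in C$. So the subalgebra $\mathfrak A^3_{G_C}(g,f|_{G_C})$, which is generated by $G_C$ and $x$, lies inside $C$. The restricted triple $(G_C,g,f|_{G_C})$ is visibly a third-type datum, because conditions (1)--(4) restrict from $G$. By the PBW basis \eqref{eq:A3-PBW}, this subalgebra has basis $\{tx^i\mid t\in G_C,\ 0\le i<p\}$ inside $H$.

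For the reverse inclusion, write any $c\in C$ in the PBW basis as $c=\sum_i c_i x^i$ with $c_i\in\K[G]$. The goal is to show that each $c_i$ lies in $\K[G_C]$; then $c$ lies in the subalgebra above, proving $C=\mathfrak A^3_{G_C}(g,f|_{G_C})$.

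\textbf{Main obstacle.} Showing $c_i\in\K[G_C]$ is the step I expect to be hardest. The first thing to try is the map
\[
(\id\otimes\pi)\Delta,
\]
where $\pi:H\to\K[G]$ is the coalgebra projection killing $x^i$ for $i\ge1$. This works because $\Delta(tx^i)$ has a term $tx^i\otimes tg^i$, up to lower terms.

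From this one would extract that every component $c_i\otimes t$ lies in $C\otimes H$. Then an induction on the top $x$-degree, peeling off terms using $x\in C$, should give $c_i\in C\cap\K[G]=\K[G_C]$.

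An alternative route is a dimension count via the Nichols--Zoeller theorem. One would identify $C$ as a $\K[G_C]$-module whose associated graded (for the coradical filtration) is $\K[G_C]\otimes\K[x]/(x^p)$, and compare dimensions.
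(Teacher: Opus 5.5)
The steps up to $x\in C$ are fine and follow the paper. Your remark that $C\not\subseteq\K[G]$ forces $C_1\neq C_0$ is the standard fact that $C_1=C_0$ implies $C=C_0$, and it can replace the paper's graded minimal-degree argument. The gap is in the reverse inclusion, which is the real content of the lemma: the mechanism you propose does not deliver it.

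With $\Delta(x)=x\otimes1+g\otimes x$, the relevant term of $\Delta(tx^i)$ is $tx^i\otimes t$, not $tx^i\otimes tg^i$. In fact $(\id\otimes\pi)\Delta(tx^i)=tx^i\otimes t$ exactly. Hence, for $c=\sum_{t,i}c_{t,i}\,tx^i\in C$,
\[
(\id\otimes\pi)\Delta(c)=\sum_{t\in G}t\,p_t(x)\otimes t,\qquad p_t(x)=\sum_i c_{t,i}x^i,
\]
and separating the group-likes in the second factor only yields $t\,p_t(x)\in C$ for each $t$. This gives neither $c_i\in C$ nor $t\in C$, for two reasons:
\begin{enumerate}
\item The second factor lies in $\pi(C)$, which a priori is only $\K[S]$ for some $S\supseteq G_C$.
\item ``Peeling off'' the top term $c_{t,N}tx^N$ using $x\in C$ presupposes $t\in C$, which is exactly what must be shown. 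Right multiplication by $\K[x]\subseteq C$ cannot remove $p_t(x)$ unless it is a unit of $\K[x]\cong\K^p$; for example, $p_t=x$ is a zero divisor.
\end{enumerate}
The Nichols--Zoeller alternative does not close this either. It only gives $p|G_C|\mid\dim C\mid p|G|$. The needed upper bound $\dim C\le p|G_C|$ is precisely the statement $\operatorname{gr}C=\K[G_C]\otimes\K[\bar x]/(\bar x^p)$, which you assert rather than prove.

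The missing idea is to use the opposite end of the coproduct, or to pass to the associated graded. Staying in $H$, let $N$ be the top $x$-degree occurring in $c$, and let $\varphi_{t,N}$ be the coordinate functional of the PBW vector $tx^N$. The second tensor factors occurring in $\Delta(sx^i)$ are exactly the basis vectors $sx^j$ with $j\le i$. The only term with $j=N$ is $sg^N\otimes sx^N$, which occurs for $i=N$. Therefore
\[
(\id\otimes\varphi_{t,N})\Delta(c)=c_{t,N}\,tg^N\in C .
\]
So $c_{t,N}\neq0$ implies $tg^N\in G_C$, hence $t\in G_C$, since $g\in G_C$. Then $\sum_t c_{t,N}tx^N\in C$, and you can subtract it and induct on $N$. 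This is an elementary substitute for the paper's argument.

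The paper works instead in $\operatorname{gr}C\subseteq\operatorname{gr}H$. There the degree-zero component of the second tensor factor lands in $(\operatorname{gr}C)_0=\K[G_C]$, which is exactly what fails for your $\pi$ on $C$ itself. This gives $(\operatorname{gr}C)_n\subseteq\K[G_C]\bar x^n$, hence $\dim C=p|G_C|$, and the conclusion follows by comparing with the subalgebra generated by $G_C$ and $x$.
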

	\begin{proof}
		Since $H$ is pointed, so is every Hopf subalgebra $C$, and
		\[
		C_0=\K[G_C],\qquad G_C=C\cap G.
		\]
		If $C\subseteq\K[G]$, then it is immediate that $C=\K[G_C]$. Assume from now on that $C$ is not contained in $\K[G]$.
		
		We first claim that $C_1\ne C_0$. Consider the coradical filtration
		\[
		H_n=\sum_{0\le i\le\min\{n,p-1\}}\K[G]x^i,
		\]
		and the associated graded algebra $\operatorname{gr}H$. Its homogeneous basis is $\{t\bar x^i\mid t\in G,\ 0\le i<p\}$, where $\deg\bar x=1$. In $\operatorname{gr}H$, the element $\bar x$ commutes with $G$, satisfies $\bar x^p=0$, and
		\[
		\Delta(\bar x)=\bar x\otimes1+g\otimes\bar x.
		\]
		For the subcoalgebra $C$, we have $C_n=C\cap H_n$, so $\operatorname{gr}C$ is a graded Hopf subalgebra of $\operatorname{gr}H$. Let $n>0$ be the least integer such that $(\operatorname{gr}C)_n\ne0$, and choose
		\[
		0\ne y=\sum_{t\in G}c_t\,t\bar x^n\in(\operatorname{gr}C)_n.
		\]
		Then $1\le n<p$. If $n>1$, the bidegree $(n-1,1)$ component of $\Delta(y)$ is
		\[
		n\sum_{t\in G}c_t\,tg\bar x^{n-1}\otimes t\bar x.
		\]
		This is nonzero because $n\ne0$ in $\K$. On the other hand, since
		\[
		\Delta(y)\in\operatorname{gr}C\otimes\operatorname{gr}C,
		\]
		that component must lie in
		\[
		(\operatorname{gr}C)_{n-1}\otimes(\operatorname{gr}C)_1,
		\]
		which is zero by the minimality of $n$, a contradiction. Hence $n=1$, and therefore $C_1\ne C_0$.
		
		By the Taft--Wilson Theorem \cite{TaftWilson}, there exist $c\in G_C$ and
		\[
		z\in\Pp_{1,c}(C)\setminus\K[G_C].
		\]
		Since $\Pp_{1,c}(C)\subseteq\Pp_{1,c}(H)$, Lemma~\ref{lem:primitive-H} forces $c=g$, and then there exist $a\in\K^\times$ and $b\in\K$ such that
		\[
		z=ax+b(1-g).
		\]
		Thus $g\in G_C$ and $x\in C$.
		
		It remains to show that no additional group-like elements from outside $G_C$ occur. Since $x\in C$, the graded algebra $\operatorname{gr}C$ contains $\bar x$, and its degree-zero part is $\K[G_C]$. Let $0<n<p$, and take
		\[
		y=\sum_{t\in G}c_t\,t\bar x^n\in(\operatorname{gr}C)_n.
		\]
		The component of $\Delta(y)$ whose second tensor factor has degree zero is
		\[
		\sum_{t\in G}c_t\,t\bar x^n\otimes t.
		\]
		This belongs to $(\operatorname{gr}C)_n\otimes\K[G_C]$. By linear independence of group-like elements, every $t$ with $c_t\ne0$ must lie in $G_C$. Hence
		\[
		(\operatorname{gr}C)_n\subseteq\K[G_C]\bar x^n.
		\]
		The reverse inclusion is clear, since both $\K[G_C]$ and $\bar x$ lie in $\operatorname{gr}C$. Therefore
		\[
		\operatorname{gr}C=\K[G_C]\langle\bar x\rangle,
		\]
		and consequently $\dim C=p|G_C|$. But the Hopf subalgebra generated by $G_C$ and $x$ inside $H$ is precisely $\mathfrak A^3_{G_C}(g,f|_{G_C})$ and has the same dimension. Hence $C=\mathfrak A^3_{G_C}(g,f|_{G_C}).$
	\end{proof}

	\subsection{Jacobson's formula and a consequence}
	
	For an associative algebra $R$ and $y\in R$, write
	\[
	b(\operatorname{ad}_R y):=[b,y]=by-yb.
	\]
	
	The following results play an important role in the classification of pointed Hopf algebras in positive characteristic; see \cite{WW14} and \cite{Scherotzke}.
	\begin{pro}\label{proJ}
		Let $\K$ be a field of characteristic $p>0$, $R$ an associative $\K$-algebra, and $B\subseteq R$ a commutative subalgebra such that
		\[
		[B,y]\subseteq B
		\]
		for some $y\in R$. Then, for every $b\in B$,
		\begin{equation}\label{eq:Jacobson-special}
			(b+y)^p=b^p+y^p+b(\operatorname{ad}_R y)^{p-1}.
		\end{equation}
		For a given $\lambda\in\K$,	if $q\in B$ satisfies
		\[
		[q,y]=\lambda q(1-q),
		\]
		then
		\begin{equation}\label{eq:Jacobson-iterate}
			q(\operatorname{ad}_R y)^{p-1}
			=\lambda^{p-1}(q-q^p).
		\end{equation}
	\end{pro}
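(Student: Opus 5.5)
The statement has two parts, and I would handle them separately. For \eqref{eq:Jacobson-special}, I would avoid the full Lie-polynomial form of Jacobson's formula. Instead I would expand $(b+y)^p$ as a sum of words in $b$ and $y$ and sort the words by how many times $b$ occurs.

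\emph{Words with no $b$ or all $b$'s, and the words linear in $b$.} The word with no $b$ gives $y^p$, and the word with only $b$'s gives $b^p$. The words linear in $b$ add up to $\sum_{i=0}^{p-1}y^iby^{p-1-i}$. Write $L_y,R_y$ for left and right multiplication by $y$; they commute, and $\operatorname{ad}_R y$ acts on $b$ as $R_y-L_y$. Then
\[
b(\operatorname{ad}_Ry)^{p-1}=\sum_{i}\binom{p-1}{i}(-1)^i y^iby^{p-1-i},
\]
and since $\binom{p-1}{i}(-1)^i\equiv1 \pmod p$, this is exactly the linear part.

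\emph{Words with at least two $b$'s.} I would show these contribute nothing in total. The standard route is Jacobson's identity $(b+y)^p=b^p+y^p+\sum_i s_i(b,y)$, where each $s_i$ is a linear combination of Lie monomials of length $p$. I would prove by induction on length that every Lie monomial in $b,y$ that contains $b$ lies in $B$, and is $0$ if it contains $b$ at least twice. Take a monomial $[u,v]$. If both $u$ and $v$ contain $b$, they lie in $B$, so $[u,v]=0$ because $B$ is commutative. Otherwise one of them is free of $b$, so it is $\pm y$, since $[y,y]=0$; then $[u,y]\in[B,y]\subseteq B$, and it vanishes if $u$ already vanishes. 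So only the part of $\sum_i s_i$ linear in $b$ survives, and this must be the linear part computed above. One can make this comparison cleanly by replacing $b$ with $\tau b$ for a central indeterminate $\tau$ and reading off the coefficient of $\tau$.

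For \eqref{eq:Jacobson-iterate}, set $D=\operatorname{ad}_R y$ restricted to $B$. Since $B$ is commutative and $[B,y]\subseteq B$, $D$ is a derivation of $B$. Because $D(q)=\lambda(q-q^2)$ is a polynomial in $q$, induction gives $D^n(q)=\lambda^nP_n(q)$, where $P_0=X$ and $P_{n+1}=(X-X^2)P_n'$, with all $P_n\in\mathbb F_p[X]$. So it suffices to prove the polynomial identity $P_{p-1}=X-X^p$ in $\mathbb F_p[X]$.

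For that identity, I would map $\mathbb F_p[X]\hookrightarrow\mathbb F_p[[w]]$ by $X\mapsto w/(1+w)$. This map is injective, and it intertwines $(X-X^2)\frac{d}{dX}$ with the Euler derivation $w\frac{d}{dw}$. To see this, note that $w=X/(1-X)$ and $(X-X^2)\frac{dw}{dX}=w$. In these coordinates $q=\sum_{k\ge1}(-1)^{k-1}w^k$, and applying the Euler derivation $p-1$ times gives
\[
\sum_k(-1)^{k-1}k^{p-1}w^k=\sum_{p\nmid k}(-1)^{k-1}w^k .
\]
By Fermat's little theorem this equals $q-\sum_{j}(-1)^{pj-1}w^{pj}=q-\bigl(w/(1+w)\bigr)^p=q-q^p$, using the Frobenius on $\mathbb F_p[[w]]$; the signs work out for $p=2$ as well.

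The main obstacle is the rigorous bookkeeping in the first part: justifying that the terms with at least two $b$'s vanish. The second part then reduces to the short formal power series computation above.
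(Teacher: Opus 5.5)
Your proposal is correct and follows the paper's route. The paper proves \eqref{eq:Jacobson-special} simply by citing Jacobson's formula; your induction showing that Lie monomials with two or more $b$'s vanish, together with the identification $\sum_i y^iby^{p-1-i}=b(\operatorname{ad}_Ry)^{p-1}$ via $\binom{p-1}{i}(-1)^i\equiv1$, spells out exactly that special case, still resting on the Lie-polynomial form of Jacobson's identity. For \eqref{eq:Jacobson-iterate} you use the paper's substitution $w=X/(1-X)$, which turns $(X-X^2)\frac{d}{dX}$ into the Euler derivation, followed by Fermat and Frobenius; the only difference is that your recursion $D^n(q)=\lambda^nP_n(q)$ treats all $\lambda$ uniformly, avoiding the paper's separate case $\lambda=0$ and its rescaling $\widetilde D=\lambda^{-1}D$.
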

	\begin{proof}
		Formula \eqref{eq:Jacobson-special} is the standard special case of Jacobson's formula in which the iterated commutators remain in a commutative subalgebra; see \cite[pp.~186--187]{J}.
		
		For \eqref{eq:Jacobson-iterate}, set $D:=\operatorname{ad}_R y$ for short, 	so that
		\[
		D(q)=[q,y]=\lambda q(1-q).
		\]
		If $\lambda=0$, then $D(q)=0$, and hence
		\[
		D^{p-1}(q)=0=\lambda^{p-1}(q-q^p).
		\]
		Suppose therefore that $\lambda\neq0$. Put $\widetilde D=\lambda^{-1}D.$ Then $\widetilde D(q)=q(1-q).$ We claim that
		\[
		\widetilde D^{p-1}(q)=q-q^p.
		\]
		Since $\widetilde D=[\,\cdot\,,\lambda^{-1}y]$ is a derivation and
		$\widetilde D(q)=q(1-q)\in\K[q]$, it preserves $\K[q]$. Thus, for every
		polynomial $P(t)\in\K[t]$,
		\[
		\widetilde D(P(q))
		=P'(q)\widetilde D(q)
		=\bigl(t(1-t)P'(t)\bigr)(q).
		\]
		Consequently, it suffices to prove the polynomial identity
		\[
		\bigl(t(1-t)\partial_t\bigr)^{p-1}t=t-t^p
		\]
		in $\K[t]$.
		
		Work in the formal power series ring $\K[[t]]$ and set
		\[
		u=\frac{t}{1-t}.
		\]
		Then
		\[
		t=\frac{u}{1+u},
		\qquad
		1-t=\frac{1}{1+u},
		\]
		and
		\[
		t(1-t)\partial_t=u\partial_u.
		\]
		Moreover,
		\[
		t=\frac{u}{1+u}
		=\sum_{m\ge1}(-1)^{m-1}u^m.
		\]
		Since
		\[
		(u\partial_u)^{p-1}u^m=m^{p-1}u^m,
		\]
		we obtain
		\[
		(u\partial_u)^{p-1}t
		=
		\sum_{m\ge1}(-1)^{m-1}m^{p-1}u^m.
		\]
		In characteristic $p$, Fermat's little theorem gives
		\[
		m^{p-1}=
		\begin{cases}
			1,&p\nmid m,\\
			0,&p\mid m.
		\end{cases}
		\]
		Therefore
		\[
		(u\partial_u)^{p-1}t
		=
		\sum_{p\nmid m}(-1)^{m-1}u^m
		=
		\frac{u}{1+u}-\frac{u^p}{1+u^p}.
		\]
		By the Frobenius identity in characteristic $p$,
		\[
		t^p
		=\left(\frac{u}{1+u}\right)^p
		=\frac{u^p}{(1+u)^p}
		=\frac{u^p}{1+u^p},
		\]
		it follows that
		\[
		(u\partial_u)^{p-1}t=t-t^p.
		\]
		This proves the desired polynomial identity, and hence
		\[
		\widetilde D^{p-1}(q)=q-q^p.
		\]
		Multiplying both sides by $\lambda^{p-1}$ gives
		\[
		D^{p-1}(q)=\lambda^{p-1}(q-q^p),
		\]
		which is \eqref{eq:Jacobson-iterate}.
	\end{proof}

	\begin{cor}\label{cor:p-shift}
		In $H=\mathfrak A^3_G(g,f)$, for every $a\in\K$,
		\begin{equation}\label{eq:p-shift}
			\bigl(x+a(1-g)\bigr)^p-\bigl(x+a(1-g)\bigr)
			=(x^p-x)+(a^p-a)(1-g^p).
		\end{equation}
	\end{cor}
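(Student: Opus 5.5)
Apply Proposition~\ref{proJ} with $R=H$, $y=x$, and $B=\K[\langle g\rangle]$, which is a commutative subalgebra of $H$. Take $b=a(1-g)$. The relation \eqref{eq:g-x-relation} gives $[g,x]=g(1-g)$. Since $\operatorname{ad}x$ is a derivation, it maps $g^k$ to $kg^k(1-g)\in B$. Hence $[B,x]\subseteq B$, and the hypotheses of the first part of Proposition~\ref{proJ} hold. Formula \eqref{eq:Jacobson-special} then yields
\[
\bigl(x+a(1-g)\bigr)^p=a^p(1-g)^p+x^p+\bigl(a(1-g)\bigr)(\operatorname{ad}x)^{p-1}.
\]
By Frobenius in the commutative algebra $B$, the first term is $a^p(1-g^p)$.

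Next we compute the iterated commutator. It is linear, and $[1,x]=0$, so $\bigl(a(1-g)\bigr)(\operatorname{ad}x)^{p-1}=-a\,g(\operatorname{ad}x)^{p-1}$. Apply \eqref{eq:Jacobson-iterate} with $q=g$ and $\lambda=1$; the condition $[g,x]=g(1-g)$ is exactly \eqref{eq:g-x-relation}. This gives $g(\operatorname{ad}x)^{p-1}=g-g^p$. Therefore the commutator term equals $-a(g-g^p)=a\bigl((1-g)-(1-g^p)\bigr)$.

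Now subtract $x+a(1-g)$ and collect terms:
\[
a^p(1-g^p)+x^p-a(1-g^p)+a(1-g)-x-a(1-g)=(x^p-x)+(a^p-a)(1-g^p),
\]
which is \eqref{eq:p-shift}.

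The only step that needs care is the sign convention. The paper defines $b(\operatorname{ad}_R y)=[b,y]=by-yb$, and \eqref{eq:g-x-relation} reads $gx-xg=g(1-g)$. These match, so $\lambda=1$ rather than $-1$. Even with the opposite sign, $(-1)^{p-1}=1$ for odd $p$, and for $p=2$ the sign is irrelevant, so the result is the same either way. We also never use $x^p=x$ here, which is why the corollary keeps $x^p-x$ explicit on the right-hand side.
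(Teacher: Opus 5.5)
Your proof is correct and follows essentially the same route as the paper. Both arguments apply Proposition~\ref{proJ} in the commutative subalgebra $\K[g]$ with $y=x$, and both evaluate the iterated commutator via \eqref{eq:Jacobson-iterate} with $q=g$, $\lambda=1$. The only difference is cosmetic: the paper takes $b=-ag$ and adds the central scalar $a$ afterwards, while you take $b=a(1-g)$ directly and use $[1,x]=0$ together with Frobenius on $(1-g)^p$.
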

	
	\begin{proof}
		Apply Proposition~\ref{proJ} in the commutative subalgebra $\K[g]$ to $b=-ag$ and $y=x$. Since $[g,x]=g(1-g)$, equations \eqref{eq:Jacobson-special}--\eqref{eq:Jacobson-iterate} give
		\[
		(x-ag)^p=x^p-a^p g^p-a(g-g^p).
		\]
		Because the scalar $a$ is central,
		\[
		x+a(1-g)=(x-ag)+a,
		\]
		and taking the $p$th power and simplifying yields \eqref{eq:p-shift}.
	\end{proof}
	
	\section{Classification of matched pairs with group algebras}\label{sec:external}
	
	Throughout this section, we fix
	\[
	A=\K[\Gamma],\qquad H=\fA^3_G(g,f),
	\]
	where $\Gamma$ is a finite group and $(G,g,f)$ is a
	third-type datum. We classify all matched pairs
	$(A,H,\triangleright,\triangleleft)$.

	\subsection{Complete classification of matched pairs}
	\begin{thm}\label{thm:distinguished-rigidity}
		For every $h\in\Gamma$,
		\[
		g\triangleright h=h,
		\qquad
		x\triangleright h=0.
		\]
		Consequently, if $G=\langle g\rangle$, then the whole left action is trivial: for $u\in H,\ h\in\Gamma$,
		\[
		u\triangleright h=\varepsilon_H(u)h.
		\]
		
	\end{thm}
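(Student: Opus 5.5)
The plan is to combine the skew-primitive constraints of Lemma~\ref{lem:matched-pair-skew} with the compatibility axiom \eqref{eq:MP4}, applied to the pair $(x,h)$. Fix $h\in\Gamma$. By Lemma~\ref{lem:matched-pair-skew}(1), $\sigma(h):=g\triangleright h\in\Gamma$. Since $x\in\Pp_{1,g}(H)$, Lemma~\ref{lem:matched-pair-skew}(3) gives $x\triangleright h\in\Pp_{h,\sigma(h)}(A)$.

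Left multiplication by $h^{-1}$ is a coalgebra automorphism of $A$. It carries $\Pp_{h,\sigma(h)}(A)$ onto $\Pp_{1,h^{-1}\sigma(h)}(A)$. Lemma~\ref{lem:primitive-group} then gives two cases:
\begin{itemize}
\item if $\sigma(h)=h$, then $x\triangleright h=0$;
\item if $\sigma(h)\neq h$, then $x\triangleright h=\lambda_h\bigl(h-\sigma(h)\bigr)$ for some scalar $\lambda_h\in\K$.
\end{itemize}
It therefore suffices to show that $\sigma(h)=h$; both claimed identities follow at once.

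Suppose, for contradiction, that $\sigma(h)\ne h$. Expand \eqref{eq:MP4} with $u=x$ and $a=h$, using $\Delta(x)=x\otimes1+g\otimes x$ and $\Delta(h)=h\otimes h$. The left side is
\[
(x\triangleleft h)\otimes h+(g\triangleleft h)\otimes(x\triangleright h).
\]
The right side is
\[
(1\triangleleft h)\otimes(x\triangleright h)+(x\triangleleft h)\otimes\sigma(h).
\]
Note that $1\triangleleft h=1$ by \eqref{eq:MP1}. Substituting $x\triangleright h=\lambda_h(h-\sigma(h))$ and collecting terms gives
\[
\bigl(x\triangleleft h-\lambda_h(1-g\triangleleft h)\bigr)\otimes\bigl(h-\sigma(h)\bigr)=0 .
\]
Since $h-\sigma(h)\neq0$, we conclude that $x\triangleleft h=\lambda_h(1-g\triangleleft h)\in\K[G]$.

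Now apply $\triangleleft h^{-1}$, which is legitimate because $\triangleleft$ is a right $A$-module action. This yields
\[
x=(x\triangleleft h)\triangleleft h^{-1}=\lambda_h\bigl(1-(g\triangleleft h)\triangleleft h^{-1}\bigr).
\]
By Lemma~\ref{lem:matched-pair-skew}(1), the right side lies in $\K[G]$. This contradicts the PBW basis \eqref{eq:A3-PBW}. Hence $g\triangleright h=h$, and so $x\triangleright h=0$.

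For the consequence, suppose $G=\langle g\rangle$. Then $H$ is generated as an algebra by $g$ and $x$. Since $\triangleright$ is a module action, $u\mapsto(u\triangleright h)$ is determined by the actions of $g$ and $x$ on $h$. The assignment $u\mapsto\varepsilon_H(u)h$ is also a module action and agrees with $\triangleright$ on these generators, so the two actions coincide on all of $H$.

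The main obstacle is ruling out $\sigma(h)\ne h$. The algebra relations alone ($gx-xg=g(1-g)$ and $x^p=x$) seem compatible with a nontrivial permutation action on orbits. The decisive step is therefore the use of \eqref{eq:MP4} to force $x\triangleleft h$ into the coradical, together with the invertibility of the right action by $h$.
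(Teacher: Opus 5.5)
Your proposal is correct and follows the paper's proof essentially step for step. It uses the same reduction via Lemma~\ref{lem:matched-pair-skew}(3) and Lemma~\ref{lem:primitive-group} to showing $g\triangleright h=h$, the same application of \eqref{eq:MP4} to $(x,h)$ to force $x\triangleleft h\in\K[G]$, and the same generator argument for the consequence. The only cosmetic difference is that you obtain the contradiction by applying $\triangleleft h^{-1}$ explicitly, whereas the paper says that $u\mapsto u\triangleleft h$ is a coalgebra automorphism and therefore preserves the coradical $H_0=\K[G]$.
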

	\begin{proof}
		Fix $h\in\Gamma$. The right action $\triangleleft$ makes $H$ a right $A$-module coalgebra, so the map
		\[
		\rho_h:H\to H,\qquad \rho_h(u)=u\triangleleft h,
		\]
		is a coalgebra automorphism; in particular $\rho_h(H_0)=H_0$.
		
		Let $q=g\triangleright h$. By Lemma~\ref{lem:matched-pair-skew}(1), $q\in \G(A)=\Gamma$, and by Lemma~\ref{lem:matched-pair-skew}(3),
		\[
		x\triangleright h\in \Pp_{h,q}(A).
		\]
		Left multiplication by $h^{-1}$ identifies $\Pp_{h,q}(A)$ with $\Pp_{1,h^{-1}q}(A)$, so Lemma~\ref{lem:primitive-group} gives
		\[
		\Pp_{h,q}(A)=
		\begin{cases}
			0,&q=h,\\
			\K(h-q),&q\ne h.
		\end{cases}
		\]
		
		Assume $q\ne h$. Then $x\triangleright h=\lambda(h-q)$ for some $\lambda\in\K$. Applying \eqref{eq:MP4} to $(x,h)$, we obtain
		\[
		(x\triangleleft h)\otimes(h-q)
		=(1-g\triangleleft h)\otimes(x\triangleright h)
		=\lambda(1-g\triangleleft h)\otimes(h-q).
		\]
		Since $h-q\neq0$, cancellation in the tensor product gives
		\[
		x\triangleleft h=\lambda(1-g\triangleleft h)\in\K[G]=H_0.
		\]
		But $\rho_h$ preserves the coradical, so $x\notin H_0$ forces $x\triangleleft h\notin H_0$, a contradiction. Hence $q=h$. Consequently $g\triangleright h=h$, and then $x\triangleright h\in \Pp_{h,h}(A)=0$, as desired.
		
		Finally, if $G=\langle g\rangle$, then $H$ is generated as an algebra by $g$ and $x$. Since $g\triangleright h=h$ and $x\triangleright h=0$ for every $h\in\Gamma$, the $H$-module structure yields $u\triangleright h=\varepsilon_H(u)h$ for $u\in H,\ h\in\Gamma$.
	\end{proof}

	\begin{rmk}\label{rmk:nontrivial-left-action}
		The condition $G=\langle g\rangle$ in the last assertion of Theorem~\ref{thm:distinguished-rigidity} cannot be omitted. For instance, take $p>2$, $G=C_p\times C_2=\langle g\rangle\times\langle s\rangle$, and $f(g^is^j)=i\in\mathbb F_p$. Let $\Gamma=C_3=\langle h\rangle$. Define $\triangleright$ by
		\[
		g\triangleright h^i=h^i,\qquad s\triangleright h^i=h^{-i},\qquad x\triangleright h^i=0
		\]
		for all $i$, extended multiplicatively to an action of $H$ on $A$, and take $\triangleleft$ to be trivial. A direct verification shows that  these data satisfy the matched-pair axioms. Thus $s\triangleright h=h^{-1}\ne h$, so the left action need not be trivial for general $G$.
	\end{rmk}

	\begin{lem}\label{lem:g-fix}
		For every $h\in\Gamma$,
		\[
		g\triangleleft h=g,\qquad 	x\triangleleft h\in \Pp_{1,g}(H).
		\]
	\end{lem}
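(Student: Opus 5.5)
We will show $g\triangleleft h=g$ first. The second claim then follows at once: by Lemma~\ref{lem:matched-pair-skew}(3), applied with $y=x\in\Pp_{1,g}(H)$ and $a=h$, we get $x\triangleleft h\in\Pp_{1,g\triangleleft h}(H)=\Pp_{1,g}(H)$.

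Put $g'=g\triangleleft h$. By Lemma~\ref{lem:matched-pair-skew}(1), $g'\in\G(H)=G$, and by Lemma~\ref{lem:matched-pair-skew}(3), $x\triangleleft h\in\Pp_{1,g'}(H)$. As in the proof of Theorem~\ref{thm:distinguished-rigidity}, the map $\rho_h(u)=u\triangleleft h$ is a coalgebra automorphism of $H$. Its inverse is $\rho_{h^{-1}}$, since $\triangleleft$ is a right action of the group algebra. So $\rho_h$ preserves the coradical filtration, $\rho_h(H_0)=H_0=\K[G]$ and $\rho_h(H_1)=H_1$. Because $x\notin H_0$, we get $x\triangleleft h\notin\K[G]$.

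Now we use Lemma~\ref{lem:primitive-H}. If $g'=1$, then $\Pp_{1,g'}(H)=0$, which forces $x\triangleleft h=0$ and contradicts injectivity of $\rho_h$. If $g'\neq 1,g$, then $\Pp_{1,g'}(H)=\K(1-g')\subseteq\K[G]$, which contradicts $x\triangleleft h\notin\K[G]$. Hence $g'=g$.

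The only step needing care is that $\rho_h$ is bijective and preserves the coradical. Bijectivity follows from $\rho_{h^{-1}}\rho_h=\rho_{hh^{-1}}=\id$, using the module axiom together with $1_H\triangleleft$ conventions, i.e. $u\triangleleft 1=u$. A bijective coalgebra map sends simple subcoalgebras to simple subcoalgebras, hence preserves $H_0$. Alternatively, we can bypass the coradical: if $x\triangleleft h\in\K[G]$, then applying $\rho_{h^{-1}}$, which maps $\K[G]$ into $\K[G]$ by Lemma~\ref{lem:matched-pair-skew}(1), would give $x\in\K[G]$, a contradiction.
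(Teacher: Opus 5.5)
Your proposal is correct and follows essentially the same route as the paper. Lemma~\ref{lem:matched-pair-skew}(3) places $x\triangleleft h$ in $\Pp_{1,g\triangleleft h}(H)$; since $u\mapsto u\triangleleft h$ is a bijective coalgebra map, $x\triangleleft h\notin\K[G]$, and Lemma~\ref{lem:primitive-H} then forces $g\triangleleft h=g$. Your explicit inverse $\rho_{h^{-1}}$ and the alternative argument via $\rho_{h^{-1}}(\K[G])\subseteq\K[G]$ just fill in details that the paper leaves implicit.
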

	\begin{proof}
		Since $x\in \Pp_{1,g}(H)$ and $h\in \G(A)$, Lemma~\ref{lem:matched-pair-skew}(3) gives
		\[
		x\triangleleft h\in \Pp_{1,g\triangleleft h}(H).
		\]
		The map $u\mapsto u\triangleleft h$ is a coalgebra automorphism, and $x\notin H_0$, hence $x\triangleleft h\notin H_0$. By Lemma~\ref{lem:primitive-H}, the only group-like $r$ for which $\Pp_{1,r}(H)$ contains a noncoradical element is $r=g$. Therefore $g\triangleleft h=g$ and then $x\triangleleft h\in \Pp_{1,g}(H)$.
	\end{proof}

	\begin{lem}\label{lem:x-and-phi}
		The restrictions of $\triangleright$ and $\triangleleft$ to $G\times\Gamma$ form a matched pair of groups.  Moreover, for every $h\in\Gamma$ there is a unique scalar $\gamma_h\in\K$ such that
		\begin{equation}\label{eq:x-right-general}
			x\triangleleft h=x+\gamma_h(1-g).
		\end{equation}
		
	\end{lem}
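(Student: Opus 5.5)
The plan has two parts: first restrict the matched-pair axioms to group-like elements, then pin down $x\triangleleft h$ from Lemma~\ref{lem:g-fix}.

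\emph{Group-like part.} By Lemma~\ref{lem:matched-pair-skew}(1), for $t\in G$ and $h\in\Gamma$ we have $t\triangleright h\in\Gamma$ and $t\triangleleft h\in\G(H)=G$. So the actions restrict to maps $G\times\Gamma\to\Gamma$ and $G\times\Gamma\to G$.
\begin{itemize}
\item These restrictions are a left action of $G$ on $\Gamma$ and a right action of $\Gamma$ on $G$, because $\triangleright$ and $\triangleleft$ are module structures on $H$ and $A$.
\item Specializing \eqref{eq:MP1}--\eqref{eq:MP3} to group-likes, where $\Delta(u)=u\otimes u$, gives $t\triangleright1=1$, $1\triangleleft h=1$, $t\triangleright(hk)=(t\triangleright h)\bigl((t\triangleleft h)\triangleright k\bigr)$ and $(ts)\triangleleft h=\bigl(t\triangleleft(s\triangleright h)\bigr)(s\triangleleft h)$.
\item These are exactly the matched-pair axioms for groups; \eqref{eq:MP4} is automatic for group-likes.
\end{itemize}

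\emph{Form of $x\triangleleft h$.} By Lemma~\ref{lem:g-fix}, $x\triangleleft h\in\Pp_{1,g}(H)$. Lemma~\ref{lem:primitive-H} then gives scalars $a_h,b_h$ with
\[
x\triangleleft h=a_hx+b_h(1-g).
\]
It remains to show $a_h=1$; then we set $\gamma_h:=b_h$, which is unique because $x$ and $1-g$ are linearly independent by the PBW basis \eqref{eq:A3-PBW}.

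\emph{Showing $a_h=1$.} This is the step I expect to be the main obstacle. I would apply \eqref{eq:MP3} to $(u,v)=(g,x)$, using $\Delta(x)=x\otimes1+g\otimes x$, $g\triangleleft h=g$ and $g\triangleright h=h$ (Theorem~\ref{thm:distinguished-rigidity}).

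First, on $gx$:
\[
(gx)\triangleleft h=\bigl(g\triangleleft(x\triangleright h)\bigr)(1\triangleleft h)+\bigl(g\triangleleft(g\triangleright h)\bigr)(x\triangleleft h).
\]
Since $x\triangleright h=0$, the first term vanishes, so
\[
(gx)\triangleleft h=g\,(x\triangleleft h).
\]

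Next, on $xg$: here $g\triangleright h=h$ and $\Delta(g)=g\otimes g$, and the $g\triangleleft h$ term drops out. This gives
\[
(xg)\triangleleft h=(x\triangleleft h)\,g.
\]

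Now act by $h$ on the relation $gx-xg=g(1-g)$. Since $g\triangleleft h=g$, the right-hand side $g-g^2$ is fixed, so
\[
[g,\,x\triangleleft h]=g(1-g).
\]
Substituting $x\triangleleft h=a_hx+b_h(1-g)$ gives $a_h\,g(1-g)=g(1-g)$. As $g\neq1$, the element $g(1-g)$ is nonzero, hence $a_h=1$.

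I would double-check the bookkeeping of the Sweedler terms in \eqref{eq:MP3}, in particular that the factor $v_{(2)}\triangleleft a_{(2)}$ lands correctly when $v=x$.

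If that route fails, a fallback is available. Since $h$ has finite order $n$, iterating gives $x\triangleleft h^n=a_h^nx+(\cdots)(1-g)=x$, so $a_h$ is a root of unity; this would then need to be combined with the relation $x^p=x$ via Corollary~\ref{cor:p-shift}. I expect the commutator argument above to suffice.
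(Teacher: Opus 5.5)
Your proposal is correct and follows the paper's proof essentially step for step: you restrict \eqref{eq:MP1}--\eqref{eq:MP3} to group-likes, write $x\triangleleft h=a_hx+b_h(1-g)$ via Lemma~\ref{lem:g-fix} and Lemma~\ref{lem:primitive-H}, and apply $\triangleleft h$ to $gx-xg=g(1-g)$ using \eqref{eq:MP3} to force $a_h=1$. Two small remarks: the paper also shows $a_h\neq0$ by the coalgebra-automorphism argument, which your commutator computation makes unnecessary; and in the $xg$ computation the factor $g\triangleleft h$ does not drop out but equals $g$, which is exactly what yields $(x\triangleleft h)g$.
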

	\begin{proof}
		Let $t\in G$ and $h\in\Gamma$. By Lemma~\ref{lem:matched-pair-skew}(1), both $t\triangleright h\in\Gamma$ and $t\triangleleft h\in G$. Thus $\triangleright$ and $\triangleleft$ restrict to maps
		\[
		G\times\Gamma\to\Gamma,\qquad G\times\Gamma\to G.
		\]
		Restricting \eqref{eq:MP2} and \eqref{eq:MP3} to group-like elements gives the identities
		\[
		t\triangleright(hk)=(t\triangleright h)\bigl((t\triangleleft h)\triangleright k\bigr),
		\]
		\[
		(ts)\triangleleft h=\bigl(t\triangleleft(s\triangleright h)\bigr)(s\triangleleft h),
		\]
		which are precisely the matched-pair axioms for the groups $\Gamma$ and $G$.
		
		Now fix $h\in\Gamma$. Since $x\in \Pp_{1,g}(H)$ and $h\in \G(A)$, Lemma~\ref{lem:matched-pair-skew}(3) gives
		\[
		x\triangleleft h\in \Pp_{1,g\triangleleft h}(H)=\Pp_{1,g}(H),
		\]
		because $g\triangleleft h=g$ by Lemma~\ref{lem:g-fix}. Moreover, the right action of $h$ is a coalgebra automorphism and $x\notin H_0$, so $x\triangleleft h\notin H_0$. Hence Lemma~\ref{lem:primitive-H} yields
		\[
		x\triangleleft h=\beta_h x+\gamma_h(1-g)
		\]
		with $\beta_h\in\K^\times$ and $\gamma_h\in\K$.
		
		Apply $\triangleleft h$ to the relation $gx-xg=g(1-g)$. Using \eqref{eq:MP3} together with $g\triangleright h=h$, $x\triangleright h=0$, and $g\triangleleft h=g$, we obtain
		\[
		(gx)\triangleleft h=g(x\triangleleft h),\qquad
		(xg)\triangleleft h=(x\triangleleft h)g,
		\]
		and
		\[
		(g(1-g))\triangleleft h=g(1-g).
		\]
		Substituting $x\triangleleft h=\beta_h x+\gamma_h(1-g)$ into the left-hand side gives
		\[
		\beta_h(gx-xg)=g(1-g).
		\]
		Since $gx-xg=g(1-g)\neq0$, it follows that $\beta_h=1$. The scalar $\gamma_h$ is uniquely determined because $1-g\neq0$. Hence
		\[
		x\triangleleft h=x+\gamma_h(1-g).
		\]
	\end{proof}

	\begin{lem}\label{lem:constraint}
		Let $\gamma_h$ be defined by \eqref{eq:x-right-general}.  Then
		\begin{enumerate}
			\item $\gamma:\Gamma\to(\K,+)$ is a group homomorphism;
			\item $\gamma_h\in \K_g$ for every $h$, where
			\begin{equation}\label{eq:Kg-def}
				\K_g:=\{a\in\K\mid(a^p-a)(1-g^p)=0\}
				=\begin{cases}
					\K,&g^p=1,\\
					\mathbb F_p,&g^p\ne1;
				\end{cases}
			\end{equation}
			\item for all $t\in G$ and $h\in\Gamma$,
			\begin{equation}\label{eq:compat-f-gamma}
				f(t\triangleleft h)
				=f(t)+\gamma_{t\triangleright h}-\gamma_h.
			\end{equation}
		\end{enumerate}
	\end{lem}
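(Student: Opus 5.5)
We obtain all three parts by applying the matched-pair axioms to the generators $x$, $g$ and $t\in G$, using the rigidity results already established: $g\triangleright h=h$ and $x\triangleright h=0$ (Theorem~\ref{thm:distinguished-rigidity}), $g\triangleleft h=g$ (Lemma~\ref{lem:g-fix}), and $x\triangleleft h=x+\gamma_h(1-g)$ (Lemma~\ref{lem:x-and-phi}).

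For (1), we use that $\triangleleft$ is a right action of $A$ on $H$, so $x\triangleleft(hk)=(x\triangleleft h)\triangleleft k$. Since $g\triangleleft k=g$ and $1\triangleleft k=1$, the right-hand side is
\[
(x+\gamma_h(1-g))\triangleleft k=x+\gamma_k(1-g)+\gamma_h(1-g).
\]
The left-hand side is $x+\gamma_{hk}(1-g)$. Because $1-g\neq0$, comparing gives $\gamma_{hk}=\gamma_h+\gamma_k$.

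For (2), we apply $\triangleleft h$ to the relation $x^p=x$. Iterating \eqref{eq:MP3} with $x\triangleright h=0$ and $g\triangleright h=h$, and using $\Delta(x)=x\otimes1+g\otimes x$, shows that $u\mapsto u\triangleleft h$ is multiplicative on the subalgebra generated by $g$ and $x$. More precisely, by induction $(x^n)\triangleleft h=(x\triangleleft h)^n$. Indeed, in $(x\cdot x^{n-1})\triangleleft h$, \eqref{eq:MP3} produces terms $x\triangleleft\bigl((x^{n-1})_{(1)}\triangleright h\bigr)$. The comultiplication of $x^{n-1}$ has first legs in the span of $g^ix^j$, and these act on $h$ through $\varepsilon$ unless $j=0$. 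Hence $(x^p)\triangleleft h=(x+\gamma_h(1-g))^p$. By Corollary~\ref{cor:p-shift} and $x^p=x$, this equals $x+\gamma_h(1-g)+(\gamma_h^p-\gamma_h)(1-g^p)$. It must also equal $x\triangleleft h=x+\gamma_h(1-g)$, so $(\gamma_h^p-\gamma_h)(1-g^p)=0$, that is, $\gamma_h\in\K_g$. The description of $\K_g$ in \eqref{eq:Kg-def} follows because $1-g^p$ is a nonzero element of $\K[G]$ exactly when $g^p\ne1$.

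For (3), we apply $\triangleleft h$ to the relation $tx-xt=f(t)t(1-g)$. By \eqref{eq:MP3}, with $\Delta(x)=x\otimes1+g\otimes x$, $x\triangleright h=0$ and $g\triangleright h=h$,
\[
(tx)\triangleleft h=(t\triangleleft h)(x\triangleleft h).
\]
In the other order,
\[
(xt)\triangleleft h=\bigl(x\triangleleft(t\triangleright h)\bigr)(t\triangleleft h)
+\bigl(g\triangleleft(t\triangleright h)\bigr)\bigl(\text{term with }x\triangleright(\cdot)=0\bigr),
\]
so only $(x\triangleleft(t\triangleright h))(t\triangleleft h)$ survives. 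For the right-hand side, $(t(1-g))\triangleleft h=(t\triangleleft h)(1-g)$, since $g$ is fixed and $g\triangleright h=h$. Writing $s=t\triangleleft h$ and $h'=t\triangleright h$, the identity becomes
\[
s(x+\gamma_h(1-g))-(x+\gamma_{h'}(1-g))s=f(t)s(1-g).
\]
Using $sx-xs=f(s)s(1-g)$ and the centrality of $g$, this reduces to $(f(s)+\gamma_h-\gamma_{h'})s(1-g)=f(t)s(1-g)$. Since $s(1-g)\ne0$, we get \eqref{eq:compat-f-gamma}.

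The main obstacle is the bookkeeping in (2): justifying $(x^n)\triangleleft h=(x\triangleleft h)^n$ from \eqref{eq:MP3}. The same care is needed in the two-sided expansion in (3), where we must track exactly which coproduct terms vanish.
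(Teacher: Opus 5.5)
Your proof is correct and follows the paper's argument step for step. You use the right-module law for (1), apply $\triangleleft h$ to $x^p=x$ together with Corollary~\ref{cor:p-shift} for (2), and apply $\triangleleft h$ to $tx-xt=f(t)t(1-g)$ via \eqref{eq:MP3} for (3); your justification of $(x^n)\triangleleft h=(x\triangleleft h)^n$, using that $g^ix^j\triangleright h=\varepsilon(g^ix^j)h$, spells out the induction the paper leaves implicit.

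One harmless slip: in $(xt)\triangleleft h$, \eqref{eq:MP3} uses the coproduct of the right factor $t$, which is group-like. So $(xt)\triangleleft h=\bigl(x\triangleleft(t\triangleright h)\bigr)(t\triangleleft h)$ holds directly, and there is no second term to discard.
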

	\begin{proof}
		(1) For $h,k\in\Gamma$, the right module law gives
		\[
		(x\triangleleft h)\triangleleft k=x\triangleleft(hk).
		\]
		Using $g\triangleleft k=g$ and \eqref{eq:x-right-general}, the left-hand side is
		\[
		x+(\gamma_h+\gamma_k)(1-g),
		\]
		while the right-hand side is $x+\gamma_{hk}(1-g)$. Since $1-g\neq0$, it follows that $\gamma_{hk}=\gamma_h+\gamma_k$.
		
		(2) Since $x^p=x$, applying $\triangleleft h$ to both sides and using
		\[
		(x^m)\triangleleft h=(x\triangleleft h)^m\qquad(m\ge1),
		\]
		which follows by induction from \eqref{eq:MP3}, we get $(x\triangleleft h)^p=x\triangleleft h$. Thus
		\[
		(x+\gamma_h(1-g))^p=x+\gamma_h(1-g).
		\]
		By Corollary~\ref{cor:p-shift} with $a=\gamma_h$,
		\[
		(\gamma_h^p-\gamma_h)(1-g^p)=0,
		\]
		so $\gamma_h\in \K_g$. If $g^p\neq1$, then $1-g^p\neq0$, and because $\gamma_h^p-\gamma_h$ is a scalar, we must have $\gamma_h^p=\gamma_h$. Hence $\gamma_h\in\mathbb F_p$, which proves \eqref{eq:Kg-def}.
		
		(3) Fix $t\in G$ and $h\in\Gamma$. Apply $\triangleleft h$ to
		\[
		tx-xt=f(t)t(1-g).
		\]
		Using \eqref{eq:MP3}, Theorem~\ref{thm:distinguished-rigidity}, and Lemma~\ref{lem:g-fix}, we obtain
		\begin{align*}
			(tx)\triangleleft h&=(t\triangleleft h)(x\triangleleft h),\\
			(xt)\triangleleft h&=(x\triangleleft(t\triangleright h))(t\triangleleft h),\\
			(t(1-g))\triangleleft h&=(t\triangleleft h)(1-g).
		\end{align*}
		Substituting \eqref{eq:x-right-general} and using the defining relation for $t\triangleleft h\in G$, we get
		\[
		\bigl(f(t\triangleleft h)+\gamma_h-\gamma_{t\triangleright h}\bigr)(t\triangleleft h)(1-g)
		=f(t)(t\triangleleft h)(1-g).
		\]
		Since $(t\triangleleft h)(1-g)\neq0$, the desired compatibility \eqref{eq:compat-f-gamma} follows.
	\end{proof}

	\begin{thm}\label{thm:right-action}
		There is a bijection between matched pairs $(A,H,\triangleright,\triangleleft)$ and data
		\[
		(\triangleright_\Gamma,\triangleleft_G,\gamma)
		\]
		with the following properties:
		\begin{enumerate}
			\item $(\Gamma,G,\triangleright_\Gamma,\triangleleft_G)$ is a matched pair of finite groups;
			\item $g\triangleright_\Gamma h=h$ and $g\triangleleft_G h=g$ for every $h\in\Gamma$;
			\item $\gamma\in\Hom(\Gamma,\K_g^+)$;
			\item
			\[
			f(t\triangleleft_G h)
			=f(t)+\gamma_{t\triangleright_\Gamma h}-\gamma_h
			\qquad(t\in G,\ h\in\Gamma).
			\]
		\end{enumerate}
		For the corresponding Hopf matched pair, the actions on the additional generator are
		\[
		x\triangleright h=0,
		\qquad
		x\triangleleft h=x+\gamma_h(1-g).
		\]
		If $L=\Gamma\bowtie G$ is the group bicrossed product and
		\begin{equation}\label{eq:F-on-L}
			F:L\to\K,
			\qquad
			F(h,t)=f(t)-\gamma_h,
		\end{equation}
		then
		\begin{equation}\label{eq:bicross-is-A3}
			A\bowtie H\cong\fA^3_L(g,F)
		\end{equation}
		as Hopf algebras.
	\end{thm}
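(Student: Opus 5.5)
The proof has two directions: necessity (a matched pair yields the data) and sufficiency (the data yield a matched pair), followed by identifying the bicrossed product.

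\emph{Necessity.} Given a matched pair $(A,H,\triangleright,\triangleleft)$, Lemma~\ref{lem:x-and-phi} restricts the actions to a matched pair of groups $(\Gamma,G)$. Theorem~\ref{thm:distinguished-rigidity} and Lemma~\ref{lem:g-fix} give condition (2). Lemma~\ref{lem:constraint} gives (3) and (4), together with the formulas $x\triangleright h=0$ and $x\triangleleft h=x+\gamma_h(1-g)$.

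\emph{Injectivity of the correspondence.} The actions of $H$ on $A$ and of $A$ on $H$ are determined by their values on algebra generators. The values on $G\cup\{x\}$ and on $\Gamma$ are fixed by \eqref{eq:MP2} and \eqref{eq:MP3}, since all of $A$ is spanned by $\Gamma$. Hence the data $(\triangleright_\Gamma,\triangleleft_G,\gamma)$ determine the matched pair.

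\emph{Surjectivity and identification.} I will not verify the axioms \eqref{eq:MP1}--\eqref{eq:MP4} by hand. Instead I use Majid's theorem in the reverse direction.

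Start from the data and form the group bicrossed product $L=\Gamma\bowtie G$, with multiplication $(h,t)(k,s)=(h(t\triangleright k),(t\triangleleft k)s)$. Identify $g$ with $(1,g)$ and define $F$ by \eqref{eq:F-on-L}. Then I check that $(L,g,F)$ is a third-type datum.

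First, $(1,g)$ is central. It commutes with $G$, and $(1,g)(h,1)=(g\triangleright h,g\triangleleft h)=(h,g)=(h,1)(1,g)$.

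Second, $F$ is a homomorphism. Compute
\[
F\bigl((h,t)(k,s)\bigr)=f(t\triangleleft k)+f(s)-\gamma_h-\gamma_{t\triangleright k}.
\]
By (4), this equals $f(t)-\gamma_k+f(s)-\gamma_h=F(h,t)+F(k,s)$. Also $F(g)=1$.

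Third, the $p$-condition \eqref{eq:A3-data} holds. It reduces to $f(t),\gamma_h\in\K_g$. The first is part of the datum $(G,g,f)$ because $g^p$ is the same element in $G$ and in $L$; the second is (3). Since $\K_g$ is closed under subtraction, $F(h,t)\in\K_g$.

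Now $E:=\fA^3_L(g,F)$ contains the Hopf subalgebras $\K[\Gamma]$ and $\fA^3_G(g,f)$, the latter generated by $G$ and $x$ since $F|_G=f$. By the PBW basis \eqref{eq:A3-PBW} and the unique factorization $L=\Gamma G$, multiplication gives a bijection $\K[\Gamma]\otimes\fA^3_G(g,f)\to E$. By Theorem~\ref{thm:majid}, $E$ is a bicrossed product for an induced matched pair. That pair is read off from $ub=(u_{(1)}\triangleright b_{(1)})(u_{(2)}\triangleleft b_{(2)})$.

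For $t\in G$ and $h\in\Gamma$, the relation $th=(t\triangleright h)(t\triangleleft h)$ in $L$ recovers the group actions. For $x$, the relation $xh=(x\triangleright h)+(g\triangleright h)(x\triangleleft h)$ must be solved in $E$. Using $hx-xh=F(h)h(1-g)=-\gamma_hh(1-g)$, we get
\[
xh=h\bigl(x+\gamma_h(1-g)\bigr).
\]
This gives $x\triangleright h=0$ and $x\triangleleft h=x+\gamma_h(1-g)$, exactly as claimed. So every admissible datum arises, and \eqref{eq:bicross-is-A3} holds.

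\emph{Main obstacle.} The delicate step is justifying that the factorization-induced matched pair has exactly these actions. Since $\Delta(x)=x\otimes1+g\otimes x$, the expansion of $xh$ involves both $x\triangleright h$ and $g\triangleright h$. One has to compare components in the basis $\Gamma\otimes H$ rather than merely guess, using the uniqueness of the decomposition $E\cong A\otimes H$.

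A secondary point is that the PBW basis of $\fA^3_L(g,F)$ requires $(L,g,F)$ to be a genuine datum. This is why the checks on centrality of $g$, on $F$ being a homomorphism, and on the $p$-condition must be carried out before invoking Definition~\ref{def:fA3}.
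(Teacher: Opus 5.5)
Your proposal is correct and follows the paper's proof essentially step by step. Necessity comes from Theorem~\ref{thm:distinguished-rigidity} and Lemmas~\ref{lem:g-fix}--\ref{lem:constraint}; sufficiency comes from building $\fA^3_L(g,F)$, checking it is a genuine datum, getting the exact factorization from the PBW basis, and reading off the actions through Majid's theorem from $th=(t\triangleright h)(t\triangleleft h)$ and $xh=h(x+\gamma_h(1-g))$. Your explicit injectivity check is a point the paper leaves implicit, and the ambiguity you flag (a term $\mu h\otimes 1$ that could move between $x\triangleright h$ and $x\triangleleft h$) is resolved either by applying $\id\otimes\varepsilon$ and $\varepsilon\otimes\id$ as in Majid's reconstruction, or by applying Theorem~\ref{thm:distinguished-rigidity} to the induced matched pair, which gives $x\triangleright h=0$ and $g\triangleright h=h$ so that $h$ can be cancelled.
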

	
	\begin{proof}
		Starting from a Hopf matched pair, Theorem~\ref{thm:distinguished-rigidity} and Lemmas~\ref{lem:g-fix}--\ref{lem:constraint} produce exactly the data listed in the statement. Conversely, assume that such data are given.
		
		Let $L=\Gamma\bowtie G$ be the group bicrossed product with multiplication
		\[
		(h,t)(k,s)=\bigl(h(t\triangleright_\Gamma k),(t\triangleleft_G k)s\bigr).
		\]
		Condition (2) and $g\in Z(G)$ imply that the element $g=(1,g)$ is central in $L$. Define $F:L\to\K$ by $F(h,t)=f(t)-\gamma_h$. Then $F$ is a group homomorphism: for $(h,t),(k,s)\in L$,
		\begin{align*}
			F\bigl((h,t)(k,s)\bigr)
			&=f((t\triangleleft_G k)s)-\gamma_{h(t\triangleright_\Gamma k)}\\
			&=f(t\triangleleft_G k)+f(s)-\gamma_h-\gamma_{t\triangleright_\Gamma k}\\
			&=f(t)+f(s)-\gamma_h-\gamma_k\\
			&=F(h,t)+F(k,s),
		\end{align*}
		where the third equality uses condition (4). Also $F(g)=f(g)-\gamma_1=1$. If $g^p=1$, the condition $(F(\ell)^p-F(\ell))(1-g^p)=0$ is automatic. If $g^p\ne1$, then by Definition~\ref{def:fA3} we have $f(G)\subseteq\mathbb F_p$, and by condition (3) we have $\gamma(\Gamma)\subseteq\mathbb F_p$; hence $F(L)\subseteq\mathbb F_p$, so the same condition holds. Thus $\fA^3_L(g,F)$ is well-defined.
		
		Inside $\fA^3_L(g,F)$, the subgroup $\Gamma$ generates a Hopf subalgebra isomorphic to $A$, while $G$ together with $x$ generates a Hopf subalgebra isomorphic to $H$. Indeed, the defining relations for $H$ are reproduced because $F(1,t)=f(t)$, and the relations involving $h\in\Gamma$ are
		\[
		hx-xh=-\gamma_h h(1-g),
		\]
		or equivalently
		\[
		xh=h\bigl(x+\gamma_h(1-g)\bigr).
		\]
		Using the PBW basis $\{\ell x^i\mid \ell\in L,\ 0\le i<p\}$ and the unique normal form $ht$ for elements of $L$, multiplication gives a vector-space isomorphism
		\[
		A\otimes H\longrightarrow \fA^3_L(g,F).
		\]
		Thus $\fA^3_L(g,F)$ is an exact factorization through $A$ and $H$. By Majid's theorem, this factorization corresponds to a unique matched pair $(A,H,\triangleright,\triangleleft)$. The cross-relations
		\[
		th=(t\triangleright_\Gamma h)(t\triangleleft_G h),\qquad
		xh=h\bigl(x+\gamma_h(1-g)\bigr)
		\]
		show that this matched pair has exactly the prescribed actions. This proves the converse and the isomorphism \eqref{eq:bicross-is-A3}.
	\end{proof}

	\begin{cor}\label{cor:parameter-space}
		Assume that the left action is trivial.  Put
		\[
		\Aut_{g,f}(G)
		=\{\sigma\in\Aut(G)\mid \sigma(g)=g,\ f\circ\sigma=f\}.
		\]
		Then matched pairs with trivial left action are in bijection with
		\begin{equation}\label{eq:TL-parameter}
			\Hom\bigl(\Gamma,\Aut_{g,f}(G)^{\mathrm{op}}\bigr)
			\times\Hom(\Gamma,\K_g^+).
		\end{equation}
		Explicitly, if $(\phi,\gamma)$ is such a pair, then
		\[
		t\triangleleft h=\phi_h(t),
		\qquad
		x\triangleleft h=x+\gamma_h(1-g).
		\]
		If $r=r_p(\Gamma)$, then
		\[
		\Hom(\Gamma,\K_g^+)\cong
		\begin{cases}
			\K^r,&g^p=1,\\
			\mathbb F_p^r,&g^p\ne1.
		\end{cases}
		\]
	\end{cor}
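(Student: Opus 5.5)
We derive the corollary from Theorem~\ref{thm:right-action} by specializing to the case where the left action is trivial, i.e.\ $t\triangleright_\Gamma h=h$ for all $t\in G$, $h\in\Gamma$. In that case the group matched-pair axioms simplify. The axiom $t\triangleright(hk)=(t\triangleright h)((t\triangleleft h)\triangleright k)$ holds automatically. The axiom $(ts)\triangleleft h=(t\triangleleft(s\triangleright h))(s\triangleleft h)$ becomes $(ts)\triangleleft h=(t\triangleleft h)(s\triangleleft h)$, so each $\phi_h:=(-)\triangleleft h$ is a group endomorphism of $G$. The right action property $t\triangleleft(hk)=(t\triangleleft h)\triangleleft k$ says $\phi_{hk}=\phi_k\circ\phi_h$, and $\phi_1=\id$. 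Hence each $\phi_h$ is invertible with inverse $\phi_{h^{-1}}$, and $h\mapsto\phi_h$ is a homomorphism $\Gamma\to\Aut(G)^{\mathrm{op}}$. Conversely, any such homomorphism together with the trivial left action gives a matched pair of groups; the remaining identities, such as $t\triangleright 1=1$ and $1\triangleleft h=1$, are immediate.

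Next we translate conditions (2)--(4) of Theorem~\ref{thm:right-action}. Condition (2) reduces to $\phi_h(g)=g$, since $g\triangleright h=h$ is automatic. Condition (3) is $\gamma\in\Hom(\Gamma,\K_g^+)$. Condition (4), with $t\triangleright h=h$, reads $f(\phi_h(t))=f(t)+\gamma_h-\gamma_h=f(t)$, i.e.\ $f\circ\phi_h=f$. Thus $\phi$ lands in $\Aut_{g,f}(G)^{\mathrm{op}}$, and the compatibility condition imposes no constraint linking $\phi$ and $\gamma$. Therefore the data decouple into the product \eqref{eq:TL-parameter}, and the bijection of Theorem~\ref{thm:right-action} restricts to the claimed one, with the explicit formulas for $t\triangleleft h$ and $x\triangleleft h$ read off from that theorem.

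Finally we compute $\Hom(\Gamma,\K_g^+)$. By \eqref{eq:Kg-def}, $\K_g^+$ is $\K^+$ when $g^p=1$ and $\mathbb F_p$ otherwise. In either case it is an elementary abelian $p$-group, i.e.\ an $\mathbb F_p$-vector space. So homomorphisms from $\Gamma$ factor through $\Gamma^{\mathrm{ab}}\otimes_{\mathbb Z}\mathbb F_p\cong\mathbb F_p^{r}$, and
\[
\Hom(\Gamma,V)\cong\Hom_{\mathbb F_p}(\mathbb F_p^r,V)\cong V^r .
\]
This gives $\K^r$ and $\mathbb F_p^r$ in the two cases. There is no real obstacle here. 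The only points needing care are the inversion from $\phi_{hk}=\phi_k\phi_h$ to the opposite group, and checking that an endomorphism $\phi_h$ is bijective, which follows because $\phi_{h^{-1}}$ is its inverse.
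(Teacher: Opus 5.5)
Your proposal is correct and follows essentially the paper's route: you specialize Theorem~\ref{thm:right-action} to the case $t\triangleright h=h$, observe that each $\phi_h$ is an automorphism fixing $g$ with $f\circ\phi_h=f$ (condition (4) collapses, so $\phi$ and $\gamma$ decouple), and compute $\Hom(\Gamma,\K_g^+)$ by factoring through $\Gamma^{\mathrm{ab}}\otimes_{\mathbb Z}\mathbb F_p\cong\mathbb F_p^r$. You also spell out the converse direction and the invertibility of $\phi_h$ via $\phi_{h^{-1}}$, which the paper leaves implicit.
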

	\begin{proof}
		If the left action is trivial, then Lemma~\ref{lem:x-and-phi} restricts to a matched pair of groups with $t\triangleright h=h$. For each $h\in\Gamma$, the map
		\[
		\phi_h:G\to G,\qquad \phi_h(t)=t\triangleleft h
		\]
		is therefore a group automorphism, and the assignment $h\mapsto\phi_h$ defines a homomorphism $\Gamma\to\Aut(G)^{\mathrm{op}}$. Lemma~\ref{lem:g-fix} gives $\phi_h(g)=g$, while the compatibility \eqref{eq:compat-f-gamma} reduces to $f\circ\phi_h=f$. Hence $\phi_h\in\Aut_{g,f}(G)$. Together with $\gamma\in\Hom(\Gamma,\K_g^+)$ from Theorem~\ref{thm:right-action}, this proves the asserted bijection.
		
		For the last assertion, recall that $\K_g^+$ is an abelian group of exponent $p$. Therefore every homomorphism from $\Gamma$ to $\K_g^+$ factors through
		\[
		\Gamma^{\mathrm{ab}}/p\Gamma^{\mathrm{ab}}\cong(\mathbb F_p)^r.
		\]
		If $g^p=1$, then $\K_g=\K$, and
		\[
		\Hom(\Gamma,\K^+)\cong\Hom_{\mathbb F_p}((\mathbb F_p)^r,\K)\cong\K^r.
		\]
		If $g^p\ne1$, then $\K_g=\mathbb F_p$, and
		\[
		\Hom(\Gamma,\mathbb F_p^+)\cong(\mathbb F_p)^r.
		\]
	\end{proof}

	\subsection{Isomorphism classification of bicrossed products}
	\begin{lem}\label{lem:aut-H}
		Every Hopf automorphism $\Phi$ of $H=\fA^3_G(g,f)$ has the form
		\begin{equation}\label{eq:aut-H-form}
			\Phi(t)=\sigma(t)\quad(t\in G),
			\qquad
			\Phi(x)=x+a(1-g),
		\end{equation}
		where $\sigma\in\Aut_{g,f}(G)$ and $a\in \K_g$.  Conversely, every pair $(\sigma,a)$ with these properties defines a Hopf automorphism.  Hence
		\[
		\Aut_{\mathrm{Hopf}}(H)\cong\Aut_{g,f}(G)\times \K_g^+.
		\]
	\end{lem}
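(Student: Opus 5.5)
Let $\Phi$ be a Hopf automorphism of $H=\fA^3_G(g,f)$. Since $\Phi$ is a coalgebra automorphism, it permutes the group-like elements. Hence $\sigma:=\Phi|_G$ is a group automorphism of $G$ and $\Phi(H_0)=H_0$.

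We first show $\sigma(g)=g$. Because $x\in\Pp_{1,g}(H)\setminus H_0$, the image $\Phi(x)$ lies in $\Pp_{1,\sigma(g)}(H)\setminus H_0$. Lemma~\ref{lem:primitive-H} then forces $\sigma(g)=g$ and
\[
\Phi(x)=\beta x+a(1-g)
\]
for some $\beta\in\K^\times$ and $a\in\K$.

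Next we apply $\Phi$ to the relation \eqref{eq:g-x-relation}. Since $g$ commutes with $1-g$, this gives $\beta\,g(1-g)=g(1-g)$. As $g\neq1$, we get $\beta=1$.

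Now apply $\Phi$ to $tx-xt=f(t)t(1-g)$. The terms involving $a(1-g)$ commute with $\sigma(t)$, because $g$ is central. Using the defining relation for $\sigma(t)$, we obtain
\[
f(\sigma(t))\,\sigma(t)(1-g)=f(t)\,\sigma(t)(1-g).
\]
Since $\sigma(t)(1-g)\neq0$, this yields $f\circ\sigma=f$. Thus $\sigma\in\Aut_{g,f}(G)$.

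Finally, apply $\Phi$ to $x^p=x$. Corollary~\ref{cor:p-shift} gives $(a^p-a)(1-g^p)=0$, so $a\in\K_g$. This is exactly the same computation as in Lemma~\ref{lem:constraint}(2).

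For the converse, take $\sigma\in\Aut_{g,f}(G)$ and $a\in\K_g$. We define $\Phi$ on the generators by \eqref{eq:aut-H-form}. The computations above, read backwards, show that the relations \eqref{eq:A3-relations} are preserved:
\begin{enumerate}
\item the commutation relations hold because $f\circ\sigma=f$ and $\sigma(g)=g$;
\item the relation $x^p=x$ holds by Corollary~\ref{cor:p-shift} together with $a\in\K_g$.
\end{enumerate}
So $\Phi$ extends to an algebra endomorphism of $H$.

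It is a coalgebra map, since $x+a(1-g)\in\Pp_{1,g}(H)$ and $\sigma$ maps group-likes to group-likes. It also commutes with the antipode, as any bialgebra map between Hopf algebras does. In the PBW basis \eqref{eq:A3-PBW}, $\Phi$ sends $tx^i$ to $\sigma(t)x^i$ plus terms of lower $x$-degree. Hence $\Phi$ is unitriangular up to the permutation $\sigma$, and therefore bijective.

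To identify the group structure, composing $(\sigma,a)$ with $(\sigma',a')$ gives $x\mapsto x+(a+a')(1-g)$, because $\sigma(1-g)=1-g$. The group-like part composes as $\sigma\sigma'$. So the correspondence $\Phi\mapsto(\sigma,a)$ is a group isomorphism onto the direct product $\Aut_{g,f}(G)\times\K_g^+$.

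The only delicate points are the following:
\begin{itemize}
\item We use Lemma~\ref{lem:primitive-H} to pin down $\Phi(x)$, including the fact that $\Phi(x)\notin H_0$; this holds because $\Phi$ is injective and preserves $H_0$.
\item Checking $x^p=x$ for the converse relies on Corollary~\ref{cor:p-shift}, which has already been established.
\end{itemize}
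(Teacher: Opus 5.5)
Your proposal is correct and follows the paper's proof step by step. You use Lemma~\ref{lem:primitive-H} to get $\sigma(g)=g$ and $\Phi(x)=\beta x+a(1-g)$, the relation $gx-xg=g(1-g)$ to get $\beta=1$, the $t$--$x$ relation to get $f\circ\sigma=f$, and Corollary~\ref{cor:p-shift} to get $a\in\K_g$, then reverse these computations for the converse. Your PBW-triangularity argument for bijectivity and your explicit composition law $(\sigma,a)(\sigma',a')=(\sigma\sigma',a+a')$ fill in details that the paper leaves implicit.
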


	\begin{proof}
		A Hopf automorphism preserves the coradical $\K[G]$, hence induces some $\sigma\in\Aut(G)$. By Lemma~\ref{lem:primitive-H}, $g$ is characterized among the nontrivial group-like elements by the existence of a noncoradical element in $\Pp_{1,g}(H)$. Since $x$ is such an element, we get $\sigma(g)=g$, and
		\[
		\Phi(x)=\beta x+a(1-g)
		\]
		for some $\beta\in\K^\times$ and $a\in\K$.
		
		Applying $\Phi$ to $gx-xg=g(1-g)$ and using $\sigma(g)=g$ gives
		\[
		\beta(gx-xg)=g(1-g).
		\]
		Since $gx-xg=g(1-g)\neq0$, we obtain $\beta=1$.
		
		For $t\in G$, applying $\Phi$ to $tx-xt=f(t)t(1-g)$ yields
		\[
		\sigma(t)x-x\sigma(t)=f(t)\sigma(t)(1-g),
		\]
		because $1-g$ commutes with every group-like element. Comparing with the defining relation for $\sigma(t)$ gives
		\[
		f(\sigma(t))=f(t),
		\]
		so $\sigma\in\Aut_{g,f}(G)$.
		
		Finally, applying $\Phi$ to $x^p=x$ and using \eqref{eq:p-shift} yields
		\[
		(a^p-a)(1-g^p)=0,
		\]
		hence $a\in\K_g$.
		
		Conversely, the same calculations show that \eqref{eq:aut-H-form} preserves all defining relations and the coalgebra structure. Composition of such automorphisms corresponds to composing the $\sigma$'s and adding the scalars, which gives the direct-product decomposition.
	\end{proof}

	\begin{thm}[Isomorphism classification]\label{thm:isomorphism}
		Let $D$ and $D'$ be two matched-pair data from Theorem~\ref{thm:right-action}, and let
		\[
		(L,F),\qquad(L',F')
		\]
		be the associated group/function pairs from \eqref{eq:F-on-L}.  Write $E_D=A\bowtie H$ and $E_{D'}=A\bowtie H$ for the corresponding bicrossed products.  Then
		\[
		E_D\cong E_{D'}
		\quad\text{as Hopf algebras}
		\]
		if and only if there exists a group isomorphism
		\[
		\Theta:L\longrightarrow L'
		\]
		such that
		\begin{equation}\label{eq:full-iso-condition}
			\Theta(g)=g,
			\qquad
			F'\circ\Theta=F.
		\end{equation}
		
		If, in addition, both matched pairs have trivial left action and one restricts to \emph{factor-preserving} Hopf isomorphisms, then for parameters $(\phi,\gamma)$ and $(\phi',\gamma')$ the criterion is equivalently the existence of
		$\sigma\in\Aut_{g,f}(G)$ and $\psi\in\Aut(\Gamma)$ such that
		\begin{equation}\label{eq:factor-preserving-iso}
			\phi'_h
			=\sigma\circ\phi_{\psi^{-1}(h)}\circ\sigma^{-1},
			\qquad
			\gamma'_h=\gamma_{\psi^{-1}(h)}
			\quad(h\in\Gamma).
		\end{equation}
	\end{thm}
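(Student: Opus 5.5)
By Theorem~\ref{thm:right-action} we have $E_D\cong\fA^3_L(g,F)$ and $E_{D'}\cong\fA^3_{L'}(g,F')$, so the first assertion is really a statement about the normalized family $\fA^3$. The plan is to run the argument of Lemma~\ref{lem:aut-H} for isomorphisms between two members of this family rather than for automorphisms of a single one.

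For sufficiency, suppose $\Theta:L\to L'$ is a group isomorphism with $\Theta(g)=g$ and $F'\circ\Theta=F$. Define $\Phi$ on generators by $\Phi(\ell)=\Theta(\ell)$ for $\ell\in L$ and $\Phi(x)=x$. We then check the defining relations \eqref{eq:A3-relations}. The relation $\ell x-x\ell=F(\ell)\ell(1-g)$ is sent to the corresponding relation in $\fA^3_{L'}(g,F')$, because $F'(\Theta(\ell))=F(\ell)$ and $\Theta(g)=g$. The relation $x^p=x$ is clearly preserved. $\Phi$ is compatible with $\Delta$, $\varepsilon$ and $S$ on generators, since $g$ is fixed. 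It maps the PBW basis \eqref{eq:A3-PBW} bijectively onto the PBW basis, so $\Phi$ is a Hopf isomorphism.

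For necessity, let $\Phi:\fA^3_L(g,F)\to\fA^3_{L'}(g,F')$ be a Hopf isomorphism. It maps the coradical onto the coradical, so it restricts to a group isomorphism $\Theta:L\to L'$. By Lemma~\ref{lem:primitive-H}, $g$ is the unique nontrivial group-like element $c$ for which $\Pp_{1,c}$ contains a noncoradical element, in both algebras. Since $\Phi(x)$ is such an element of $\Pp_{1,\Theta(g)}$, we get $\Theta(g)=g$ and $\Phi(x)=\beta x+a(1-g)$ with $\beta\neq0$. Applying $\Phi$ to $gx-xg=g(1-g)$ forces $\beta=1$. Applying $\Phi$ to $\ell x-x\ell=F(\ell)\ell(1-g)$, and using that $1-g$ is central, gives $F'(\Theta(\ell))\Theta(\ell)(1-g)=F(\ell)\Theta(\ell)(1-g)$. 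Hence $F'\circ\Theta=F$.

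The factor-preserving statement requires more care, and I expect it to be the main obstacle, mainly in interpreting \emph{factor-preserving}. I take it to mean isomorphisms $\Phi:E_D\to E_{D'}$ with $\Phi(A)=A$ and $\Phi(H)=H$. Then $\Phi|_A$ is induced by some $\psi\in\Aut(\Gamma)$, and by Lemma~\ref{lem:aut-H} $\Phi|_H$ is given by a pair $(\sigma,a)$ with $\sigma\in\Aut_{g,f}(G)$ and $a\in\K_g$. Since the left actions are trivial, the cross relations in the bicrossed product are
\[
th=h\,\phi_h(t),\qquad xh=h\bigl(x+\gamma_h(1-g)\bigr).
\]
Applying $\Phi$ to the first relation gives $\sigma(t)\psi(h)=\psi(h)\sigma(\phi_h(t))$. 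Comparing with $\sigma(t)\psi(h)=\psi(h)\phi'_{\psi(h)}(\sigma(t))$ yields $\phi'_{\psi(h)}=\sigma\phi_h\sigma^{-1}$. Applying $\Phi$ to the second relation, the shift $a(1-g)$ appears on both sides and cancels, because $\psi(h)$ commutes with $1-g$. This leaves $\gamma'_{\psi(h)}=\gamma_h$. Substituting $h\mapsto\psi^{-1}(h)$ gives \eqref{eq:factor-preserving-iso}.

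Conversely, given $\sigma$ and $\psi$ satisfying \eqref{eq:factor-preserving-iso}, the map $\psi\otimes\Phi_{(\sigma,0)}$ on $A\otimes H$ respects the bicrossed multiplication \eqref{eq:bicross-product-mult}, by the same computations read backwards. It is a coalgebra map because it is a tensor product of coalgebra maps, so it is a Hopf isomorphism preserving both factors. The parameter $a$ plays no role, which is why it does not appear in the criterion.
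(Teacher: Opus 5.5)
Your proof is correct and follows essentially the same route as the paper: reduce to $\fA^3_L(g,F)$ via Theorem~\ref{thm:right-action}, use Lemma~\ref{lem:primitive-H} and the relation $gx-xg=g(1-g)$ to get $\Theta(g)=g$ and $\Phi(x)=x+a(1-g)$, compare the relations $\ell x-x\ell=F(\ell)\ell(1-g)$, and for the converse send $x\mapsto x$. The differences are cosmetic. In the factor-preserving case you read off $\gamma'_{\psi(h)}=\gamma_h$ directly from the cross relation $xh=h\bigl(x+\gamma_h(1-g)\bigr)$, whereas the paper derives it from $F'\circ\Theta=F$ together with $f\circ\sigma=f$. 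You also check the converse on $A\otimes H$ directly, while the paper applies the first part to $\Theta(h,t)=(\psi(h),\sigma(t))$. One small wording point: $1-g$ is not central, since it does not commute with $x$; it commutes with all group-likes, and that is all your computation uses.
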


	\begin{proof}
		By Theorem~\ref{thm:right-action},
		$E_D\cong\fA^3_L(g,F)$ and
		$E_{D'}\cong\fA^3_{L'}(g,F')$. Let
		$\Phi:\fA^3_L(g,F)\to\fA^3_{L'}(g,F')$ be a Hopf isomorphism. Its restriction to group-like elements is a group isomorphism
		$\Theta:L\to L'$. Since the distinguished skew primitive is characterized by the relation with $g$, we get $\Theta(g)=g$ and
		\[
		\Phi(x)=x+a(1-g)
		\]
		for some $a\in\K_g$, as in Lemma~\ref{lem:aut-H}.
		
		For $\ell\in L$, applying $\Phi$ to
		\[
		\ell x-x\ell=F(\ell)\ell(1-g)
		\]
		and using that $a(1-g)$ commutes with every group-like element, we obtain
		\[
		\Theta(\ell)x-x\Theta(\ell)
		=F(\ell)\Theta(\ell)(1-g).
		\]
		Comparing with the defining relation for $\Theta(\ell)$ in the target gives
		$F'(\Theta(\ell))=F(\ell)$, proving \eqref{eq:full-iso-condition}. Conversely, a group isomorphism satisfying \eqref{eq:full-iso-condition}, together with $x\mapsto x$, preserves all defining relations and hence extends to a Hopf isomorphism.
		
		Now suppose both left actions are trivial and $\Phi$ preserves the two factors. Then $\Theta$ restricts to automorphisms
		$\psi\in\Aut(\Gamma)$ and $\sigma\in\Aut_{g,f}(G)$. Preservation of the group bicrossed-product multiplication gives
		\[
		\sigma\circ\phi_k
		=\phi'_{\psi(k)}\circ\sigma,
		\]
		which is the first identity in \eqref{eq:factor-preserving-iso}. The condition
		$F'\circ\Theta=F$ yields
		\[
		f(\sigma(t))-\gamma'_{\psi(h)}=f(t)-\gamma_h.
		\]
		Since $f\circ\sigma=f$, this reduces to $\gamma'_{\psi(h)}=\gamma_h$, which is equivalent to
		\[
		\gamma'_h=\gamma_{\psi^{-1}(h)}.
		\]
		The converse is immediate by defining $\Theta(h,t)=(\psi(h),\sigma(t))$.
		
		In general, the group isomorphism $\Theta$ need not preserve the two group-like factors; such mixing occurs explicitly in Section~\ref{sec:application}.
	\end{proof}
	
	\subsection{Special cases}\label{sec:special}
	The following corollaries specialize Theorem~\ref{thm:right-action} to several cases of interest.
	
	\begin{cor}\label{cor:p-coprime}
		Assume $p\nmid|\Gamma|$.  Then $\gamma=0$ in every matched pair.  Hence the general matched pairs are exactly the matched pairs of groups
		$(\Gamma,G,\triangleright,\triangleleft)$ satisfying
		\[
		g\triangleright h=h,
		\qquad g\triangleleft h=g,
		\qquad f(t\triangleleft h)=f(t)
		\]
		for all $t\in G$ and $h\in\Gamma$.
		If the left action is trivial, this reduces to
		\[
		\Hom\bigl(\Gamma,\Aut_{g,f}(G)^{\mathrm{op}}\bigr).
		\]
	\end{cor}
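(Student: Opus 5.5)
The plan is to read Corollary~\ref{cor:p-coprime} off Theorem~\ref{thm:right-action} by showing that the additive parameter vanishes whenever $p\nmid|\Gamma|$. By Lemma~\ref{lem:constraint}(1)--(2), $\gamma$ is a group homomorphism $\Gamma\to\K_g^+\subseteq(\K,+)$. Since $\K$ has characteristic $p$, every nonzero element of $(\K,+)$ has order exactly $p$. For $h\in\Gamma$ with $n=\ord(h)$, additivity gives $n\gamma_h=\gamma_{h^n}=\gamma_1=0$. Because $n$ divides $|\Gamma|$ and $p\nmid|\Gamma|$, $n$ is invertible in $\K$, so $\gamma_h=0$. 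Equivalently, in Corollary~\ref{cor:parameter-space} the rank $r_p(\Gamma)$ is $0$, since $\Gamma^{\mathrm{ab}}$ has order prime to $p$, and therefore $\Hom(\Gamma,\K_g^+)=0$.

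Next, substitute $\gamma=0$ into the data of Theorem~\ref{thm:right-action}. Conditions (1) and (2) are unchanged: we have a matched pair of groups with $g\triangleright h=h$ and $g\triangleleft h=g$. Condition (3) is automatic. Condition (4) becomes $f(t\triangleleft h)=f(t)$. The bijection of Theorem~\ref{thm:right-action} then shows that the matched pairs are exactly these matched pairs of groups. The corresponding actions on $x$ are $x\triangleright h=0$ and $x\triangleleft h=x$.

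For trivial left action, apply Corollary~\ref{cor:parameter-space}. The second factor $\Hom(\Gamma,\K_g^+)$ is trivial by the first step, so the parameter space reduces to $\Hom\bigl(\Gamma,\Aut_{g,f}(G)^{\mathrm{op}}\bigr)$.

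There is no real obstacle. The only point to state carefully is why $\gamma$ vanishes: $(\K,+)$ is an elementary abelian $p$-group, so no nonzero element has order prime to $p$.
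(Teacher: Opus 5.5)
Your proposal is correct and follows essentially the same route as the paper. The paper shows $\gamma=0$ by noting that its image is a $p$-group (since $\K_g^+$ has exponent $p$) whose order divides $|\Gamma|$, which is the subgroup-level form of your element-order argument $n\gamma_h=0$ with $n$ invertible in $\K$; it then reads off the remaining assertions from Theorem~\ref{thm:right-action} and Corollary~\ref{cor:parameter-space}, exactly as you do.
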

	
	\begin{proof}
		The additive group $\K_g^+$ has exponent $p$, so the image of
		$\gamma:\Gamma\to \K_g^+$ is a finite $p$-group whose order divides $|\Gamma|$. Since $p\nmid|\Gamma|$, the image is trivial. The remaining assertions follow from Theorem~\ref{thm:right-action} and Corollary~\ref{cor:parameter-space}.
	\end{proof}
	
	\begin{cor}\label{cor:p-group}
		Assume that $\Gamma$ is a finite $p$-group and that the left action is trivial.  Let
		$r=r_p(\Gamma)$.  Then the matched-pair parameter space is
		\[
		\Hom\bigl(\Gamma,\Aut_{g,f}(G)^{\mathrm{op}}\bigr)
		\times
		\begin{cases}
			\K^r,&g^p=1,\\
			\mathbb F_p^r,&g^p\ne1.
		\end{cases}
		\]
		More explicitly:
		\begin{enumerate}
			\item if $\Gamma=C_{p^m}$, the $\phi$-component is determined by one
			$\sigma\in\Aut_{g,f}(G)$ satisfying $\sigma^{p^m}=\id$, and $\gamma$ is determined by one element of $\K_g$;
			\item if $\Gamma=(C_p)^r$, the $\phi$-component is a commuting $r$-tuple
			$(\sigma_1,\ldots,\sigma_r)$ in $\Aut_{g,f}(G)$ with
			$\sigma_i^p=\id$, and the $\gamma$-component is an arbitrary element of $\K_g^r$.
		\end{enumerate}
	\end{cor}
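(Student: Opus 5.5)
This is a specialization of Corollary~\ref{cor:parameter-space}, so the plan is to start from the parameter space \eqref{eq:TL-parameter} and compute each factor for a $p$-group $\Gamma$.

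The second factor is already computed in Corollary~\ref{cor:parameter-space}. Since $\K_g^+$ is elementary abelian of exponent $p$, we have $\Hom(\Gamma,\K_g^+)\cong\Hom_{\mathbb F_p}\bigl(\Gamma^{\mathrm{ab}}\otimes\mathbb F_p,\K_g\bigr)$, which is $\K^r$ or $\mathbb F_p^r$ according as $g^p=1$ or $g^p\neq1$, with $r=r_p(\Gamma)$. The assumption that $\Gamma$ is a $p$-group is used only to make the two explicit cases below concrete; the general displayed formula holds verbatim.

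For the $\phi$-component I will use the presentations of the two groups. For $\Gamma=C_{p^m}=\langle h\rangle$, a homomorphism $\Gamma\to\Aut_{g,f}(G)^{\mathrm{op}}$ is determined by the image $\sigma=\phi_h$. The only relation to check is $\sigma^{p^m}=\id$, and the passage to the opposite group is harmless because $\Gamma$ is cyclic. Moreover $r_p(C_{p^m})=1$, so $\gamma$ is determined by the single value $\gamma_h\in\K_g$. For $\Gamma=(C_p)^r$ with standard generators $h_1,\dots,h_r$, a homomorphism into $\Aut_{g,f}(G)^{\mathrm{op}}$ is the same as a tuple $\sigma_i=\phi_{h_i}$ that pairwise commute and satisfy $\sigma_i^p=\id$. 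Commuting in the opposite group is the same as commuting in the group, so this is exactly the stated data. Here $\Gamma^{\mathrm{ab}}\otimes\mathbb F_p=\mathbb F_p^r$, so $\gamma$ is an arbitrary $r$-tuple $(\gamma_{h_1},\dots,\gamma_{h_r})\in\K_g^r$, where additivity in the exponents is automatic because every element of $\K_g$ has additive order dividing $p$.

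There is no real obstacle. The only point needing care is bookkeeping: $\phi$ is an anti-homomorphism into $\Aut(G)$, since $\phi_{hk}=\phi_k\circ\phi_h$, so the relations must be read in $\Aut_{g,f}(G)^{\mathrm{op}}$. For the relations used here (powers and commutation), this makes no difference.
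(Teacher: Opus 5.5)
Your proposal is correct and follows essentially the same route as the paper's proof. Both specialize Corollary~\ref{cor:parameter-space} and read off the $\gamma$-factor as $\K^r$ or $\mathbb F_p^r$. Both then describe $\Hom\bigl(\Gamma,\Aut_{g,f}(G)^{\mathrm{op}}\bigr)$ through the images of generators: one $\sigma$ with $\sigma^{p^m}=\id$ in the cyclic case, and a commuting tuple with $\sigma_i^p=\id$ in the elementary abelian case. Your remark that these relations are unaffected by passing to the opposite group is a small point of bookkeeping the paper leaves implicit.
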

	\begin{proof}
		By Corollary~\ref{cor:parameter-space}, the parameter space is
		\[
		\Hom\bigl(\Gamma,\Aut_{g,f}(G)^{\mathrm{op}}\bigr)
		\times\Hom(\Gamma,\K_g^+),
		\]
		and the second factor is isomorphic to $\K^r$ or $\mathbb F_p^r$ according to whether $g^p=1$ or not.
		
		For the first factor, a homomorphism $\Gamma\to\Aut_{g,f}(G)^{\mathrm{op}}$ is determined by the images of a chosen set of generators. If $\Gamma=C_{p^m}=\langle h\rangle$, this is a single element $\sigma=\phi_h\in\Aut_{g,f}(G)$ with $\sigma^{p^m}=\id$. If $\Gamma=(C_p)^r$, the images of the $r$ generators must pairwise commute in $\Aut_{g,f}(G)$ and each has order dividing $p$, giving a commuting $r$-tuple $(\sigma_1,\ldots,\sigma_r)$ with $\sigma_i^p=\id$. The description of $\gamma$ follows from additivity.
	\end{proof}

	\begin{cor}\label{cor:G-Cp}
		Let $G=C_p=\langle g\rangle$, $f(g)=1$, and $H=\fA^3_{C_p}(g,f)$. Then for every finite group $\Gamma$, in any matched pair $(\K[\Gamma],H,\triangleright,\triangleleft)$ the left action is trivial and the right action on $G$ is trivial. Matched pairs are therefore in bijection with
		\[
		\Hom(\Gamma,\K^+).
		\]
	\end{cor}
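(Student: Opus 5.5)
The plan is to specialize Theorem~\ref{thm:right-action} to the case $G=\langle g\rangle\cong C_p$. First, Theorem~\ref{thm:distinguished-rigidity} already tells us that when $G=\langle g\rangle$ the whole left action is trivial: $u\triangleright h=\varepsilon_H(u)h$. Second, for the right action on group-like elements, Lemma~\ref{lem:g-fix} gives $g\triangleleft h=g$ for all $h\in\Gamma$. Since the left action is trivial, Corollary~\ref{cor:parameter-space} applies, and $t\mapsto t\triangleleft h$ is a group automorphism $\phi_h$ of $G$. It fixes the generator $g$, so $\phi_h=\id_G$. Equivalently, one can argue directly with \eqref{eq:MP3} and trivial left action: $(g^i)\triangleleft h=(g\triangleleft h)^i=g^i$. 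Hence the right action on $G$ is trivial.

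It remains to identify the parameter space. By Corollary~\ref{cor:parameter-space}, matched pairs with trivial left action correspond to
\[
\Hom\bigl(\Gamma,\Aut_{g,f}(G)^{\mathrm{op}}\bigr)\times\Hom(\Gamma,\K_g^+).
\]
Here $\Aut_{g,f}(C_p)$ is trivial, because any automorphism fixing $g$ is the identity. So the first factor is a single point. For the second factor, $g^p=1$ in $C_p$, hence $\K_g=\K$ by \eqref{eq:Kg-def}, and the second factor is $\Hom(\Gamma,\K^+)$.

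We should also check that the compatibility condition (4) of Theorem~\ref{thm:right-action} imposes nothing further. With both actions on group-likes trivial, it reads
\[
f(t)=f(t)+\gamma_h-\gamma_h,
\]
which holds automatically. Therefore every $\gamma\in\Hom(\Gamma,\K^+)$ occurs, via $x\triangleleft h=x+\gamma_h(1-g)$, and distinct $\gamma$ give distinct matched pairs since $\gamma_h$ is uniquely determined.

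No step is a real obstacle. The only point needing care is that trivial left action must be established first, so that $\phi_h$ is an automorphism rather than merely a map. Theorem~\ref{thm:distinguished-rigidity} supplies exactly this.
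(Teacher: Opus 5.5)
Your proposal is correct and follows essentially the same route as the paper: trivial left action from Theorem~\ref{thm:distinguished-rigidity}, triviality of $\Aut_{g,f}(C_p)$ because any such automorphism fixes the generator $g$, and $\K_g=\K$ since $g^p=1$, all fed into Corollary~\ref{cor:parameter-space}. Your extra check that condition (4) of Theorem~\ref{thm:right-action} is vacuous here is redundant but harmless, because Corollary~\ref{cor:parameter-space} already builds that compatibility into $\Aut_{g,f}(G)$.
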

	
	\begin{proof}
		Since $G=\langle g\rangle$, Theorem~\ref{thm:distinguished-rigidity} gives trivial left action. Moreover, $\Aut_{g,f}(C_p)=\{\id\}$ because any such automorphism fixes the generator $g$. Finally, $g^p=1$, so $\K_g=\K$. By Corollary~\ref{cor:parameter-space}, the parameter space is
		\[
		\Hom\bigl(\Gamma,\Aut_{g,f}(G)^{\mathrm{op}}\bigr)\times\Hom(\Gamma,\K_g^+),
		\]
		which reduces to $\Hom(\Gamma,\K^+)$.
	\end{proof}

	\section{Exact factorizations of $\mathfrak{A}^3$}\label{sec:internal}
	
	In this section we classify exact factorizations of $H=\fA^3_G(g,f).$

	\subsection{The factorization structure theorem}
	
	\begin{thm}[Factorization structure]\label{thm:factorization}
		Up to interchanging the two factors, exact factorizations
		\[
		H=BT
		\]
		by proper Hopf subalgebras are in bijection with nontrivial exact group
		factorizations
		\begin{equation}\label{eq:group-exact-factorization}
			G=G_B\Gamma,\qquad
			G_B\cap\Gamma=\{1\},\qquad
			g\in G_B,\qquad
			\Gamma\ne\{1\}.
		\end{equation}
		Under this correspondence,
		\[
		B=\fA^3_{G_B}\bigl(g,f|_{G_B}\bigr),
		\qquad
		T=\K[\Gamma].
		\]
	\end{thm}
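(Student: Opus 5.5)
The plan is to combine Lemma~\ref{lem:subalgebra}, which lists all Hopf subalgebras of $H$, with a dimension count and the PBW basis \eqref{eq:A3-PBW}. Let $H=BT$ be an exact factorization by proper Hopf subalgebras. By Lemma~\ref{lem:subalgebra}, each of $B$ and $T$ is either a group algebra $\K[G_C]$ or a subalgebra $\fA^3_{G_C}(g,f|_{G_C})$ containing $x$.

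First I will show that exactly one factor is of the second kind. If both were group algebras, then $BT\subseteq\K[G]\subsetneq H$, contradicting surjectivity. If both contained $x$, then $x\otimes 1$ and $1\otimes x$ would be linearly independent in $B\otimes T$, yet both map to $x$, contradicting injectivity. After interchanging the factors if necessary, $B=\fA^3_{G_B}(g,f|_{G_B})$ with $g\in G_B$ and $T=\K[\Gamma]$.

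Next I compare dimensions: $p|G|=\dim H=\dim B\cdot\dim T=p|G_B||\Gamma|$, so $|G|=|G_B||\Gamma|$. Injectivity of multiplication on the group-like parts, that is on $\K[G_B]\otimes\K[\Gamma]\subseteq B\otimes T$, gives that $(b,h)\mapsto bh$ is injective, so $G_B\cap\Gamma=\{1\}$. Together with the order count this yields $G=G_B\Gamma$. Since $T$ is proper and nontrivial, we have $\Gamma\ne\{1\}$, which is automatic from properness of $B$: if $\Gamma=\{1\}$ then $B=H$.

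For the converse, I start from a group factorization \eqref{eq:group-exact-factorization} and set $B=\fA^3_{G_B}(g,f|_{G_B})$, which is a Hopf subalgebra of $H$ generated by $G_B$ and $x$ and has basis $\{bx^i\}$, and $T=\K[\Gamma]$. The multiplication map $B\otimes T\to H$ sends $bx^i\otimes h$ to $bx^ih$. The main point, and the only step I expect to need care, is showing that these images form a basis of $H$. Using $xh=h(x+c_h)$, where $c_h\in\K[G]$ comes from $hx-xh=f(h)h(1-g)$ (namely $c_h=f(h)(1-g)$), I can rewrite $x^ih=h(x+f(h)(1-g))^i$. This equals $hx^i$ plus terms lying in $\K[G]x^{<i}$. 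A triangularity argument with respect to the degree in $x$, combined with the bijection $G_B\times\Gamma\to G$, then shows that the images span $H$. Since the dimensions are equal, the map is bijective.

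Finally, both directions assign $(G_B,\Gamma)=(B\cap G,T\cap G)$, and Lemma~\ref{lem:subalgebra} recovers the subalgebras from these groups. The two constructions are therefore mutually inverse, which establishes the bijection.
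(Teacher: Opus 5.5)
Your proposal is correct and follows essentially the same route as the paper: Lemma~\ref{lem:subalgebra} forces exactly one rank-one factor, the dimension count together with $G_B\cap\Gamma=\{1\}$ gives $G=G_B\Gamma$, and the converse uses the same $x$-degree triangularity argument on the PBW basis. The only slip is a sign, and it is harmless: from $hx-xh=f(h)h(1-g)$ one gets $xh=h\bigl(x-f(h)(1-g)\bigr)$, i.e.\ $c_h=-f(h)(1-g)$, which does not affect the triangularity argument.
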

	\begin{proof}
		Suppose  that $H=BT$ is an exact factorization. Then $B\cap T=\K1$.
		By Lemma~\ref{lem:subalgebra}, every Hopf subalgebra of $H$ is either a group algebra contained in $\K[G]$, or is of the form $\fA^3_{G_C}\bigl(g,f|_{G_C}\bigr)$ for some subgroup $G_C\le G$ with $g\in G_C$. In the latter case the subalgebra contains both $g$ and the skew-primitive generator $x$. Therefore two noncoradical Hopf subalgebras would both contain $x$, contradicting $B\cap T=\K1$. Thus at most one of $B,T$ lies outside $\K[G]$. On the other hand, they cannot both be contained in $\K[G]$, because then $BT\subseteq\K[G]\ne H$. Hence, after interchanging the factors if necessary, we may write
		\[
		B=\fA^3_{G_B}\bigl(g,f|_{G_B}\bigr),\qquad T=\K[\Gamma]
		\]
		for subgroups $G_B,\Gamma\le G$ with $g\in G_B$.
		
		From  $B\cap T=\K1$, we get  $G_B\cap\Gamma=\{1\}$. Since
		\[
		\dim H
		=p|G|
		=\dim B\cdot\dim T
		=p|G_B|\cdot|\Gamma|,
		\]
		we have $|G|=|G_B||\Gamma|$. Since $G_B\cap\Gamma=\{1\}$, the product $G_B\Gamma$ has order $|G_B||\Gamma|$, so $G=G_B\Gamma$. Finally, $\Gamma\ne\{1\}$ because $T$ is proper.
		
		Conversely, assume that subgroups $G_B,\Gamma\le G$ satisfy \eqref{eq:group-exact-factorization}, and put
		\[
		B=\fA^3_{G_B}\bigl(g,f|_{G_B}\bigr),\qquad T=\K[\Gamma].
		\]
		Both are Hopf subalgebras of $H$. It remains to show that multiplication $B\otimes T\to H$ is bijective.
		
		The PBW basis of $B\otimes T$ is
		\[
		\{\,s x^i\otimes h\mid s\in G_B,\ h\in\Gamma,\ 0\le i<p\,\}.
		\]
		For $h\in\Gamma$, the defining relation in $H$ can be written as
		\[
		xh=hx-f(h)h(1-g).
		\]
		Since $g\in G_B$, the element $1-g$ lies in $B$. By induction on $i$, this relation gives
		\[
		sx^ih=(sh)x^i+\text{a linear combination of terms of $x$-degree $<i$}.
		\]
		The exact group factorization $G=G_B\Gamma$ implies that
		\[
		(s,h)\mapsto sh
		\]
		is a bijection $G_B\times\Gamma\to G$. Hence, with the PBW basis $\{t x^i\mid t\in G,\ 0\le i<p\}$ of $H$, the multiplication map is represented, after ordering by $x$-degree, by a block-triangular matrix whose diagonal blocks are permutation matrices. In particular, the matrix is invertible, so multiplication is bijective. Therefore $H=BT$ is an exact factorization.
	\end{proof}

	\begin{rmk}\label{rmk:rigidity}
		The proof of Theorem~\ref{thm:factorization} relies on the fact that every noncoradical Hopf subalgebra of $H$ contains both $g$ and $x$. Consequently, an exact factorization by proper Hopf subalgebras cannot have two noncoradical factors: exactly one factor is a group algebra, while the other is a third-type rank-one Hopf algebra with the same distinguished pair $(g,x)$.
	\end{rmk}
	
	\begin{cor}
		\label{cor:Radford-no-factorization}
		The Radford algebra $R$ admits no exact factorization by two proper Hopf subalgebras.
	\end{cor}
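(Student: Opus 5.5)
The plan is to apply Theorem~\ref{thm:factorization} to the Radford algebra $R=\fA^3_{C_p}(g,f)$ of Remark~\ref{rmk:Radford-fA3}. That theorem reduces the question to exact factorizations of the underlying group. Up to interchanging the two factors, it says that an exact factorization $R=BT$ by proper Hopf subalgebras corresponds to an exact group factorization
\[
C_p=G_B\Gamma,\qquad G_B\cap\Gamma=\{1\},\qquad g\in G_B,\qquad \Gamma\ne\{1\}.
\]
So it suffices to show that no such group factorization of $G=C_p=\langle g\rangle$ exists.

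Suppose subgroups $G_B,\Gamma\le C_p$ satisfy these conditions. Because $g\in G_B$ and $g$ generates $C_p$, we get $G_B=C_p$. The condition $G_B\cap\Gamma=\{1\}$ then forces $\Gamma=\{1\}$, which contradicts $\Gamma\ne\{1\}$. Equivalently, the order count $|G|=|G_B||\Gamma|$ with $|G_B|=p$ gives $|\Gamma|=1$.

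It follows that the set of nontrivial exact group factorizations of $C_p$ with $g$ in the first factor is empty. By the bijection of Theorem~\ref{thm:factorization}, the set of exact factorizations of $R$ by proper Hopf subalgebras is also empty.

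I do not expect a real obstacle here. The only point to check is that ``proper'' in the corollary means the same thing as in Theorem~\ref{thm:factorization}, namely that neither factor is $\K1$ or all of $R$; that theorem's proof already shows that exactly one factor is noncoradical. As a sanity check, one can also argue directly from Lemma~\ref{lem:subalgebra}. Any noncoradical Hopf subalgebra of $R$ contains both $g$ and $x$, and therefore equals $R$. The coradical Hopf subalgebras lie in $\K[C_p]$, so a product of two of them is not all of $R$. A factorization would therefore need one factor to be $R$ itself, which is not proper.
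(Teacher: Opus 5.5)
Your proposal is correct and matches the paper's proof: apply Theorem~\ref{thm:factorization} to $R=\fA^3_{C_p}(g,f)$, and observe that $g\in G_B$ together with $C_p=\langle g\rangle$ forces $G_B=C_p$, hence $\Gamma=\{1\}$, which contradicts nontriviality. Your supplementary direct check via Lemma~\ref{lem:subalgebra} is also valid, though it is not needed.
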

	
	\begin{proof}
		By Theorem~\ref{thm:factorization}, an exact factorization of $R$ would require a nontrivial exact group factorization
		\[
		C_p=G_B\Gamma
		\]
		with $g\in G_B$. Since $g$ generates $C_p$, necessarily $G_B=C_p$, forcing $\Gamma=\{1\}$, contrary to nontriviality.
	\end{proof}
	
	\subsection{The canonical matched pair attached to a factorization}
	
	Let $G=G_B\Gamma$ be an exact group factorization as in
	\eqref{eq:group-exact-factorization}. Since the two subgroups are finite,
	also $G=\Gamma G_B$, and every product $th$, with
	$t\in G_B$ and $h\in\Gamma$, has a unique decomposition
	\begin{equation}\label{eq:canonical-group-actions}
		th=(t\triangleright h)(t\triangleleft h),
		\qquad
		t\triangleright h\in\Gamma,\quad
		t\triangleleft h\in G_B.
	\end{equation}
	These maps form the standard matched pair of groups associated with the
	exact group factorization.

	\begin{thm}[Canonical matched pair]\label{thm:factorization-mp}
		Let $G=G_B\Gamma$ satisfy \eqref{eq:group-exact-factorization}, and set
		\[
		A=\K[\Gamma],\qquad B=\fA^3_{G_B}\bigl(g,f|_{G_B}\bigr).
		\]
		The matched pair of Hopf algebras associated by Majid's theorem with the reversed exact factorization
		\[
		H=\K[\Gamma]\,\fA^3_{G_B}\bigl(g,f|_{G_B}\bigr)
		\]
		is the one determined in Theorem~\ref{thm:right-action} by the group actions in \eqref{eq:canonical-group-actions} and
		\begin{equation}\label{eq:canonical-gamma}
			\gamma_h=-f(h)\qquad(h\in\Gamma).
		\end{equation}
		Equivalently,
		\[
		x\triangleright h=0,\qquad x\triangleleft h=x-f(h)(1-g).
		\]
		For this matched pair, the group bicrossed product $L=\Gamma\bowtie G_B$ is canonically isomorphic to $G$ via
		\[
		\Theta:L\longrightarrow G,\qquad \Theta(h,t)=ht,
		\]
		and the additive map $F$ of Theorem~\ref{thm:right-action} satisfies
		\[
		F(h,t)=f(ht)=f\bigl(\Theta(h,t)\bigr).
		\]
		Consequently,
		\[
		A\bowtie B\cong\fA^3_L(g,F)\cong\fA^3_G(g,f)=H.
		\]
	\end{thm}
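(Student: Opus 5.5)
Since $G_B$ and $\Gamma$ are finite subgroups with $G=G_B\Gamma$ and $G_B\cap\Gamma=\{1\}$, taking inverses gives $G=\Gamma G_B$. Then multiplication $\K[\Gamma]\otimes B\to H$ is bijective; this is proved exactly as in Theorem~\ref{thm:factorization}, using a triangular PBW argument in the reversed order. So $H=\K[\Gamma]B$ is an exact factorization. By Majid's theorem it corresponds to a unique matched pair $(A,B,\triangleright,\triangleleft)$, and that pair is read off from the rule
\[
u\,a=(u_{(1)}\triangleright a_{(1)})(u_{(2)}\triangleleft a_{(2)}),\qquad u\in B,\ a\in A,
\]
computed inside $H$. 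The plan is therefore to compute $t h$ and $x h$ in $H$ for $t\in G_B$ and $h\in\Gamma$, and to rewrite each product in the normal form $A\cdot B$.

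For group-likes, $th=(t\triangleright h)(t\triangleleft h)$ with $t\triangleright h\in\Gamma$ and $t\triangleleft h\in G_B$. This is exactly \eqref{eq:canonical-group-actions}, so the group parts of the actions are the canonical ones. For $x$, we use $\Delta(x)=x\otimes1+g\otimes x$ and $\Delta(h)=h\otimes h$:
\[
xh=(x\triangleright h)(1\triangleleft h)+(g\triangleright h)(x\triangleleft h)=(x\triangleright h)+(g\triangleright h)(x\triangleleft h).
\]
On the other hand, the defining relation of $H$ gives $xh=hx-f(h)h(1-g)=h\bigl(x-f(h)(1-g)\bigr)$, where the right-hand factor lies in $B$ because $g\in G_B$. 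Since the factorization is exact, comparing these two expressions in $A\otimes B$ forces $g\triangleright h=h$, $x\triangleright h=0$ and $x\triangleleft h=x-f(h)(1-g)$, which is \eqref{eq:canonical-gamma}. Theorem~\ref{thm:distinguished-rigidity} already guarantees the first two identities, so in the comparison one may simply substitute them. Since $x\triangleleft h=x+\gamma_h(1-g)$ determines $\gamma_h$ uniquely (Lemma~\ref{lem:x-and-phi}), we conclude that $\gamma_h=-f(h)$.

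For the group bicrossed product, the multiplication in $L=\Gamma\bowtie G_B$ is $(h,t)(k,s)=(h(t\triangleright k),(t\triangleleft k)s)$. The map $\Theta(h,t)=ht$ sends this product to $h(t\triangleright k)(t\triangleleft k)s=h\,tk\,s$, so $\Theta$ is a homomorphism. It is bijective because $G=\Gamma G_B$ with trivial intersection. Also $\Theta(1,g)=g$. By \eqref{eq:F-on-L}, $F(h,t)=f(t)-\gamma_h=f(t)+f(h)=f(ht)$, using that $f$ is a homomorphism on $G$. Hence $F=f\circ\Theta$, and Theorem~\ref{thm:right-action} combined with the isomorphism criterion of Theorem~\ref{thm:isomorphism} (the direction where $\Theta$ extends by $x\mapsto x$) gives $A\bowtie B\cong\fA^3_L(g,F)\cong\fA^3_G(g,f)$.

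The only step needing care is the claim that Majid's correspondence recovers the actions precisely from these normal-form rewrites. This amounts to uniqueness of the decomposition in $A\otimes B$, which is exactness, and I expect no genuine obstacle beyond checking the Sweedler bookkeeping for $xh$.
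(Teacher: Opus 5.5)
Your proposal is correct and follows essentially the same route as the paper: you compute $xh$ once via the bicrossed-product rule, using $g\triangleright h=h$ and $x\triangleright h=0$ from Theorem~\ref{thm:distinguished-rigidity}, and once via the defining relation $xh=h\bigl(x-f(h)(1-g)\bigr)$, read off $\gamma_h=-f(h)$, and then check $F=f\circ\Theta$. Your extra checks (exactness of the reversed factorization, $\Theta$ being a homomorphism with $\Theta(1,g)=g$, and the converse direction of Theorem~\ref{thm:isomorphism}) only make explicit what the paper leaves implicit; note that the comparison in $A\otimes B$ alone does not force $x\triangleright h=0$, since it allows $x\triangleright h=c\,h$ with $x\triangleleft h$ shifted by $-c$, so your appeal to the rigidity theorem (or to the counit) is genuinely needed.
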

	
	\begin{proof}
		The group actions are precisely those arising from the exact group factorization, so $\Theta(h,t)=ht$ is the usual group isomorphism $\Gamma\bowtie G_B\to G$.
		
		It remains to determine the action on $x$. In $H$, for $h\in\Gamma$,
		\[
		hx-xh=f(h)h(1-g),
		\]
		hence
		\[
		xh=h\bigl(x-f(h)(1-g)\bigr).
		\]
		On the other hand, in the bicrossed product \eqref{eq:bicross-product-mult}, the cross relation is computed as
		\[
		xh=(x_{(1)}\triangleright h)(x_{(2)}\triangleleft h)
		=(x\triangleright h)\cdot 1+(g\triangleright h)(x\triangleleft h)
		=h(x\triangleleft h),
		\]
		because $x\triangleright h=0$ and $g\triangleright h=h$. Comparing with the previous expression yields
		\[
		x\triangleleft h=x-f(h)(1-g),
		\]
		which is \eqref{eq:canonical-gamma}.
		
		Finally, Theorem~\ref{thm:right-action} gives
		\[
		F(h,t)=f(t)-\gamma_h=f(t)+f(h)=f(ht)=f\bigl(\Theta(h,t)\bigr),
		\]
		since $f$ is a group homomorphism and $ht$ is the product in $G$. The asserted Hopf-algebra isomorphisms follow.
	\end{proof}

	\begin{cor}[Exact factorizations up to Hopf automorphism]
		\label{cor:factorization-iso}
		After orienting each factorization so that the rank-one factor is written first, two exact factorizations
		\[
		H=\fA^3_{G_{B_1}}\bigl(g,f|_{G_{B_1}}\bigr)\K[\Gamma_1]
		\quad\text{and}\quad
		H=\fA^3_{G_{B_2}}\bigl(g,f|_{G_{B_2}}\bigr)\K[\Gamma_2]
		\]
		are equivalent under $\Aut_{\mathrm{Hopf}}(H)$ if and only if there exists
		\[
		\sigma\in\Aut_{g,f}(G)
		\]
		such that
		\[
		\sigma(G_{B_1})=G_{B_2},
		\qquad
		\sigma(\Gamma_1)=\Gamma_2.
		\]
		Thus equivalence classes of exact factorizations are precisely the $\Aut_{g,f}(G)$-orbits on the group exact factorizations \eqref{eq:group-exact-factorization}.
	\end{cor}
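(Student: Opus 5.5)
Both directions go through Lemma~\ref{lem:aut-H}, which says every Hopf automorphism of $H$ has the form $\Phi=\Phi_{\sigma,a}$ with $\Phi(t)=\sigma(t)$ for $t\in G$ and $\Phi(x)=x+a(1-g)$, where $\sigma\in\Aut_{g,f}(G)$ and $a\in\K_g$. We call two oriented exact factorizations equivalent when some $\Phi\in\Aut_{\mathrm{Hopf}}(H)$ satisfies $\Phi(B_1)=B_2$ and $\Phi(T_1)=T_2$.

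\emph{Sufficiency.} Suppose $\sigma\in\Aut_{g,f}(G)$ satisfies $\sigma(G_{B_1})=G_{B_2}$ and $\sigma(\Gamma_1)=\Gamma_2$. Take $\Phi=\Phi_{\sigma,0}$. Then $\Phi$ maps $\K[\Gamma_1]$ onto $\K[\Gamma_2]$. By Lemma~\ref{lem:subalgebra}, the Hopf subalgebra $\fA^3_{G_{B_1}}(g,f|_{G_{B_1}})$ is generated by $G_{B_1}$ and $x$. Since $\Phi(x)=x$ and $\Phi(G_{B_1})=G_{B_2}$, its image is the Hopf subalgebra generated by $G_{B_2}$ and $x$, namely $\fA^3_{G_{B_2}}(g,f|_{G_{B_2}})$. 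So the two factorizations are equivalent.

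\emph{Necessity.} Suppose $\Phi=\Phi_{\sigma,a}$ carries the first factorization to the second. We must check that $\Phi$ respects the chosen orientation, since otherwise the statement would have to allow swapping the factors.
\begin{itemize}
\item $\Phi$ maps group algebras to group algebras, because it preserves the coradical $\K[G]$.
\item The rank-one factor is not contained in $\K[G]$, so its image is not either.
\end{itemize}
Hence $\Phi(B_1)$ is the noncoradical factor of the second factorization, that is $\Phi(B_1)=B_2$, and likewise $\Phi(T_1)=T_2$. Intersecting with $G$ and using $\Phi|_G=\sigma$ gives
\[
\sigma(G_{B_1})=\Phi(B_1)\cap G=B_2\cap G=G_{B_2},
\]
and similarly $\sigma(\Gamma_1)=\Gamma_2$. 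The scalar $a$ plays no role: it only changes $x$ within $B_2$, and $1-g\in B_2$ because $g\in G_{B_2}$.

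The final sentence of the statement now follows from Theorem~\ref{thm:factorization}. That theorem gives a bijection between oriented exact factorizations of $H$ and group exact factorizations \eqref{eq:group-exact-factorization}. The equivalence just proved shows that $\Aut_{\mathrm{Hopf}}(H)$ acts on the factorization side exactly as $\Aut_{g,f}(G)$ acts on pairs $(G_B,\Gamma)$. This action preserves the conditions in \eqref{eq:group-exact-factorization}, since $\sigma(g)=g$.

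The only delicate step is the orientation check in the necessity direction, and the coradical argument above handles it.
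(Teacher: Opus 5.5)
Your proof is correct and follows the paper's route. Both directions rest on Lemma~\ref{lem:aut-H}: sufficiency takes $a=0$, and the translation $a(1-g)$ is harmless because $g\in G_{B_i}$. Your necessity step is a little more careful than the paper's, since you check via the coradical that $\Phi$ cannot swap the rank-one and group-algebra factors, and you recover $\sigma(G_{B_1})=G_{B_2}$ by intersecting $\Phi(B_1)$ with $G$; the paper leaves both points implicit.
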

	
	\begin{proof}
		By Lemma~\ref{lem:aut-H}, every Hopf automorphism of $H$ has the form
		\[
		t\longmapsto\sigma(t)\quad(t\in G),
		\qquad
		x\longmapsto x+a(1-g),
		\]
		with $\sigma\in\Aut_{g,f}(G)$ and $a\in\K_g$. Since $\sigma$ acts on group-like elements as a group automorphism of $G$, it sends the subgroup $G_{B_1}$ to $\sigma(G_{B_1})$ and $\Gamma_1$ to $\sigma(\Gamma_1)$. Moreover, because $g\in G_{B_i}$, the translation term $a(1-g)$ lies in each rank-one factor $\fA^3_{G_{B_i}}(g,f|_{G_{B_i}})$, so it does not change that factor as a subalgebra. This proves necessity.
		
		Conversely, if such $\sigma$ exists, Lemma~\ref{lem:aut-H} with $a=0$ provides a Hopf automorphism of $H$ carrying the first rank-one factor to the second and $\K[\Gamma_1]$ to $\K[\Gamma_2]$. Hence the factorizations are equivalent.
	\end{proof}

	\section{The Radford algebra and cyclic groups}\label{sec:application}
	
	Let
	\[
	R=\fA^3_{C_p}(g,f),
	\qquad C_p=\langle g\rangle,
	\qquad f(g)=1,
	\]
	and let $\Gamma=C_n=\langle h\rangle$. Thus
	\[
	g^p=1,\qquad x^p=x,
	\qquad gx-xg=g(1-g),
	\qquad
	\Delta(x)=x\otimes1+g\otimes x.
	\]
	
	\begin{lem}\label{lem:Aut-Radford}
		For the Radford datum,
		\[
		\Aut_{g,f}(C_p)=\{\id\}.
		\]
		Nevertheless, the full Hopf automorphism group of $R$ is nontrivial:
		\[
		\Aut_{\mathrm{Hopf}}(R)\cong(\K,+),
		\qquad x\longmapsto x+a(1-g).
		\]
	\end{lem}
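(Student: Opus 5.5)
The plan is to read both assertions off Lemma~\ref{lem:aut-H}, specialized to the Radford datum $G=C_p=\langle g\rangle$, $f(g^i)=i$. That lemma gives
\[
\Aut_{\mathrm{Hopf}}(R)\cong\Aut_{g,f}(C_p)\times\K_g^+ ,
\]
so it suffices to compute the two factors.

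\emph{The group factor.} Any $\sigma\in\Aut_{g,f}(C_p)$ satisfies $\sigma(g)=g$ by definition. Since $g$ generates $C_p$, we get $\sigma(g^i)=g^i$ for all $i$, hence $\sigma=\id$. The condition $f\circ\sigma=f$ then holds trivially. This gives $\Aut_{g,f}(C_p)=\{\id\}$, which is also the observation already used in Corollary~\ref{cor:G-Cp}.

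\emph{The scalar factor.} In the Radford algebra $g^p=1$, so by \eqref{eq:Kg-def} we have $\K_g=\K$. Lemma~\ref{lem:aut-H} then says that every Hopf automorphism has the form $\Phi_a\colon g\mapsto g$, $x\mapsto x+a(1-g)$ with $a\in\K$. It also says every such $a$ occurs: the relation $gx-xg=g(1-g)$ is preserved because $1-g$ commutes with $g$, and $x^p=x$ is preserved by Corollary~\ref{cor:p-shift}, since $(a^p-a)(1-g^p)=0$. Finally, $\Phi_a\circ\Phi_b=\Phi_{a+b}$, because $\Phi_b$ fixes $1-g$. Hence $a\mapsto\Phi_a$ is a group isomorphism $(\K,+)\to\Aut_{\mathrm{Hopf}}(R)$.

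It remains to justify ``nontrivial''. For $a\neq0$ we have $\Phi_a(x)-x=a(1-g)\neq0$, because $g\neq1$ (Remark~\ref{rmk:A3-data}); so $\Phi_a\neq\id$. Since $\K$ is infinite, the automorphism group is in fact infinite.

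No step is a real obstacle; everything is a specialization of Lemma~\ref{lem:aut-H}. The one point to state explicitly is that $g^p=1$ is what forces $\K_g=\K$ rather than $\mathbb F_p$.
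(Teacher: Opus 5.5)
Your proposal is correct and follows essentially the same route as the paper: both derive $\Aut_{g,f}(C_p)=\{\id\}$ from the fact that $g$ generates $C_p$, and then specialize Lemma~\ref{lem:aut-H} using $g^p=1$, so that $\K_g=\K$. Your explicit checks that $\Phi_a\circ\Phi_b=\Phi_{a+b}$ and that $\Phi_a\neq\id$ for $a\neq0$ are harmless additions that the paper leaves implicit.
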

	
	\begin{proof}
		Any $\sigma\in\Aut_{g,f}(C_p)$ must fix the distinguished generator $g$. Since $C_p$ is generated by $g$, this forces $\sigma=\id$.
		
		By Lemma~\ref{lem:aut-H}, every Hopf automorphism of $R$ has the form
		\[
		t\mapsto\sigma(t)\quad(t\in C_p),\qquad x\mapsto x+a(1-g),
		\]
		where $\sigma\in\Aut_{g,f}(C_p)$ and $a\in\K_g$. As shown above, $\sigma=\id$, and since $g^p=1$, we have $\K_g=\K$. Hence the Hopf automorphisms are precisely the translations
		\[
		x\mapsto x+a(1-g),\qquad a\in\K,
		\]
		which gives $\Aut_{\mathrm{Hopf}}(R)\cong(\K,+)$.
	\end{proof}
	\begin{thm}[Matched pairs for $R$ and $C_n$]\label{thm:Radford-cyclic}
		Every matched pair $(\K[C_n],R,\triangleright,\triangleleft)$ is determined by a scalar $\alpha\in\K$ through
		\[
		g\triangleleft h=g,
		\qquad
		x\triangleleft h=x+\alpha(1-g),
		\]
		with the following alternatives:
		\begin{enumerate}
			\item if $p\nmid n$, necessarily $\alpha=0$;
			\item if $p\mid n$, every $\alpha\in\K$ occurs.
		\end{enumerate}
		The corresponding bicrossed product $E_\alpha$ is generated by $g,h,x$ with relations
		\begin{align}
			g^p&=1,& h^n&=1,& gh&=hg,\label{eq:Radford-group-rel}\\
			x^p&=x,& gx-xg&=g(1-g),& xh-hx&=\alpha h(1-g),\label{eq:Radford-x-rel}
		\end{align}
		and coalgebra structure
		\[
		\Delta(g)=g\otimes g,
		\qquad
		\Delta(h)=h\otimes h,
		\qquad
		\Delta(x)=x\otimes1+g\otimes x.
		\]
		Equivalently,
		\[
		E_\alpha\cong\fA^3_{C_n\times C_p}(g,F_\alpha),
		\qquad
		F_\alpha(h^ig^j)=j-i\alpha.
		\]
	\end{thm}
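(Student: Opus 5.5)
This is a direct specialization of Theorem~\ref{thm:right-action}, more precisely of Corollary~\ref{cor:G-Cp}, to $\Gamma=C_n=\langle h\rangle$.

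\emph{Step 1: reduce to a homomorphism into $\K^+$.} By Corollary~\ref{cor:G-Cp}, every matched pair $(\K[C_n],R,\triangleright,\triangleleft)$ has trivial left action and trivial right action on $G=C_p$. In particular $g\triangleleft h=g$. Such matched pairs are in bijection with $\gamma\in\Hom(C_n,\K^+)$. Since $C_n$ is cyclic, $\gamma$ is determined by the scalar $\alpha:=\gamma_h$, and Lemma~\ref{lem:x-and-phi} then gives $x\triangleleft h=x+\alpha(1-g)$.

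\emph{Step 2: determine which $\alpha$ occur.} A scalar $\alpha$ defines a homomorphism $C_n\to\K^+$ exactly when $n\alpha=0$ in $\K$.
\begin{itemize}
\item If $p\nmid n$, then $n$ is invertible in $\K$, so $\alpha=0$; this is also Corollary~\ref{cor:p-coprime}.
\item If $p\mid n$, the condition $n\alpha=0$ holds for every $\alpha\in\K$. Since $g^p=1$, we have $\K_g=\K$, so no further restriction arises and every $\alpha$ occurs.
\end{itemize}
The compatibility condition (4) of Theorem~\ref{thm:right-action} is automatic here, because both actions on the group-like parts are trivial.

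\emph{Step 3: identify the bicrossed product.} Because the left action is trivial, $L=C_n\bowtie C_p$ is the direct product $C_n\times C_p$. This gives $gh=hg$, $g^p=1$ and $h^n=1$. By \eqref{eq:F-on-L}, $F_\alpha(h^ig^j)=f(g^j)-\gamma_{h^i}=j-i\alpha$, and \eqref{eq:bicross-is-A3} gives $E_\alpha\cong\fA^3_{C_n\times C_p}(g,F_\alpha)$.

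Reading off the defining relations \eqref{eq:A3-relations} of this algebra:
\begin{itemize}
\item For $t=h$ they give $hx-xh=-\alpha h(1-g)$, that is, $xh-hx=\alpha h(1-g)$.
\item For $t=g$ they give $gx-xg=g(1-g)$.
\item Together with $x^p=x$ and the coalgebra structure \eqref{eq:A3-Hopf-group}--\eqref{eq:A3-Hopf-x}, this gives exactly the presentation \eqref{eq:Radford-group-rel}--\eqref{eq:Radford-x-rel}.
\end{itemize}

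\emph{Main obstacle.} The only point needing care is checking that $\fA^3_{C_n\times C_p}(g,F_\alpha)$ is a legitimate third-type datum. We need $F_\alpha$ to be well defined on $C_n\times C_p$, which reduces to $n\alpha=0$ and is exactly the condition from Step 2. Condition \eqref{eq:A3-data} is automatic because $g^p=1$.
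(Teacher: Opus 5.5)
Your proof is correct and follows the paper's argument: Corollary~\ref{cor:G-Cp} reduces everything to $\gamma\in\Hom(C_n,\K^+)$, the condition $n\alpha=0$ settles the two alternatives, and Theorem~\ref{thm:right-action} gives $F_\alpha(h^ig^j)=j-i\alpha$. The only difference is cosmetic: the paper reads the cross relation directly off the bicrossed product, $xh=h(x\triangleleft h)=hx+\alpha h(1-g)$, whereas you read it off the defining relations of $\fA^3_{C_n\times C_p}(g,F_\alpha)$; also note that $L$ being a direct product needs the right action on $C_p$ to be trivial as well as the left action, which you did establish in Step~1.
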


	\begin{proof}
		By Corollary~\ref{cor:G-Cp}, matched pairs correspond to homomorphisms $\gamma:C_n\to\K^+$. Put $\alpha=\gamma_h$. Since $h^n=1$, we must have
		\[
		0=\gamma_{h^n}=n\alpha.
		\]
		If $p\nmid n$, then $n\cdot1_\K$ is nonzero in $\K$, hence invertible, so $\alpha=0$. If $p\mid n$, then $n\cdot1_\K=0$, so $n\alpha=0$ for every $\alpha\in\K$.
		
		Since the left action is trivial and the right action on $G$ is trivial, the group generators commute, and the only nontrivial cross relation is
		\[
		xh=h(x\triangleleft h)=hx+\alpha h(1-g).
		\]
		The presentation follows. Finally, by Theorem~\ref{thm:right-action},
		\[
		F_\alpha(h^ig^j)=f(g^j)-\gamma_{h^i}=j-i\alpha,
		\]
		because $f(g^j)=j$ and $\gamma_{h^i}=i\alpha$. This gives the stated isomorphism.
	\end{proof}

	\begin{cor}\label{cor:Radford-trivial}
		If $p\nmid n$, then
		\[
		E_0\cong\K[C_n]\otimes R.
		\]
	\end{cor}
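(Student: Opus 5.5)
By Theorem~\ref{thm:Radford-cyclic}, when $p\nmid n$ the only matched pair is the one with $\alpha=0$. In that case both actions are trivial: the left action is trivial by Corollary~\ref{cor:G-Cp}, $g\triangleleft h=g$, and $x\triangleleft h=x$. The plan is to show that the bicrossed product with trivial actions is the ordinary tensor product Hopf algebra, and then to confirm this against the presentation of $E_0$.

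First I will specialize the multiplication \eqref{eq:bicross-product-mult}. Since both actions are trivial, we have $u_{(1)}\triangleright b_{(1)}=\varepsilon(u_{(1)})b_{(1)}$ and $u_{(2)}\triangleleft b_{(2)}=\varepsilon(b_{(2)})u_{(2)}$. Substituting these gives
\[
(a\bowtie u)(b\bowtie v)=ab\bowtie uv .
\]
The comultiplication of $A\bowtie H$ is the tensor product coalgebra structure by definition. The unit, the counit, and the antipode $S(a\bowtie u)=S(a)\bowtie S(u)$ (or the antipode forced by uniqueness) therefore coincide with those of $\K[C_n]\otimes R$. Hence the identity map on the underlying space is a Hopf isomorphism $E_0\cong\K[C_n]\otimes R$.

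As a check, I will also read this off the presentation in Theorem~\ref{thm:Radford-cyclic}. With $\alpha=0$ the relation $xh-hx=\alpha h(1-g)$ becomes $xh=hx$, so $h$ commutes with both $g$ and $x$. The relations then split into those of $\K[C_n]$, namely $h^n=1$, and those of $R$. The coalgebra formulas also split, and the dimensions agree: both sides have dimension $np$. Therefore the evident surjection $\K[C_n]\otimes R\to E_0$ is an isomorphism.

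I do not expect a real obstacle. The only point that needs care is that the trivial matched pair produces exactly the tensor product Hopf structure, including the antipode. This follows because a bialgebra map between Hopf algebras automatically preserves the antipode.
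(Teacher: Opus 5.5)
Your proposal is correct and follows essentially the same route as the paper: since $p\nmid n$ forces $\alpha=0$, both actions are trivial, so the bicrossed-product multiplication becomes $(a\bowtie u)(b\bowtie v)=ab\bowtie uv$ and $E_0$ is the tensor product Hopf algebra. You write out the multiplication formula, the antipode point and a presentation-based cross-check more explicitly than the paper's one-line argument, but the underlying idea is identical.
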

	\begin{proof}
		In this case Theorem~\ref{thm:Radford-cyclic} gives $\alpha=0$. Hence both the left action and the right action are trivial, so all generators of the two tensor factors commute. Therefore the bicrossed product reduces to the ordinary tensor product.
	\end{proof}
	
	\begin{thm}[Unrestricted Hopf isomorphisms]\label{thm:Radford-iso}
		Assume $p\mid n$.  For $\alpha,\alpha'\in\K$,
		\[
		E_\alpha\cong E_{\alpha'}
		\quad\Longleftrightarrow\quad
		\alpha'=u\alpha+v
		\quad\text{for some }u\in\mathbb F_p^\times,
		\ v\in\mathbb F_p.
		\]
		Equivalently, the set of Hopf-isomorphism classes is the orbit space
		\[
		\K/\operatorname{AGL}_1(\mathbb F_p),
		\]
		where $\operatorname{AGL}_1(\mathbb F_p)$ acts by affine transformations
		$a\mapsto ua+v$.
	\end{thm}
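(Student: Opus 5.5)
Since $E_\alpha\cong\fA^3_{L}(g,F_\alpha)$ with $L=C_n\times C_p=\langle h\rangle\times\langle g\rangle$ and $F_\alpha(h^ig^j)=j-i\alpha$ by Theorem~\ref{thm:Radford-cyclic}, the plan is to apply the isomorphism criterion of Theorem~\ref{thm:isomorphism}. Thus $E_\alpha\cong E_{\alpha'}$ holds if and only if there is a group automorphism $\Theta$ of $L$ with $\Theta(g)=g$ and $F_{\alpha'}\circ\Theta=F_\alpha$. The problem reduces to describing the automorphisms of the abelian group $C_n\times C_p$ fixing $g$, and then reading off the constraint on $(\alpha,\alpha')$.

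First I describe such $\Theta$. It is determined by $\Theta(h)=h^ag^b$ with $a\in\mathbb Z/n$, $b\in\mathbb F_p$. The element $h^ag^b$ must have order dividing $n$, which is automatic since $p\mid n$. Bijectivity together with $\Theta(g)=g$ forces the induced map on $L/\langle g\rangle\cong C_n$ to be an automorphism, so $a\in(\mathbb Z/n)^\times$. Conversely, any such pair $(a,b)$ gives an automorphism: it is surjective because its image contains $g$ and $h^a$, hence $h$.

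Next I impose the condition on $F$. Since $F_{\alpha'}\circ\Theta$ and $F_\alpha$ are homomorphisms and agree on $g$ (both give $1$), it suffices to compare them on $h$:
\[
F_{\alpha'}(h^ag^b)=b-a\alpha'=-\alpha=F_\alpha(h).
\]
Here $a$ enters $\K$ only through its residue $\bar a\in\mathbb F_p$. Because $p\mid n$ and $a$ is a unit mod $n$, $\bar a\in\mathbb F_p^\times$. The condition therefore reads $\alpha'=\bar a^{-1}\alpha+\bar a^{-1}b$, which has the form $u\alpha+v$ with $u\in\mathbb F_p^\times$ and $v\in\mathbb F_p$.

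For the converse, given $u,v$, choose $a\in(\mathbb Z/n)^\times$ reducing to $u^{-1}$ mod $p$; such a lift exists because $(\mathbb Z/n)^\times\to(\mathbb Z/p)^\times$ is surjective when $p\mid n$. Then set $b=v\bar a$. The orbit-space reformulation follows because the relation is exactly the $\operatorname{AGL}_1(\mathbb F_p)$-orbit relation. I expect the main obstacle to be this surjectivity of $(\mathbb Z/n)^\times\to(\mathbb F_p)^\times$. It holds by the Chinese remainder theorem: write $n=p^km$ with $p\nmid m$ and take $a\equiv u^{-1}\pmod{p^k}$ and $a\equiv1\pmod m$.
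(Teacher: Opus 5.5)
Your proposal is correct and follows essentially the same route as the paper. You reduce via Theorem~\ref{thm:isomorphism} to automorphisms of $L=C_n\times C_p$ fixing $g$, of the form $h\mapsto h^a g^b$ with $a\in(\mathbb Z/n\mathbb Z)^\times$; comparing $F_{\alpha'}\circ\Theta$ with $F_\alpha$ on $h$ gives $\alpha'=\bar a^{-1}(\alpha+b)$, and the converse uses the Chinese remainder theorem for the surjectivity of $(\mathbb Z/n\mathbb Z)^\times\to\mathbb F_p^\times$, exactly as in the paper (your explicit lift $a\equiv u^{-1}$ modulo $p^k$, $a\equiv 1$ modulo $m$ just spells this out).
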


	\begin{proof}
		By Theorem~\ref{thm:Radford-cyclic},
		$E_\alpha\cong\fA^3_L(g,F_\alpha)$ with $L=C_n\times C_p$. Theorem~\ref{thm:isomorphism} states that an isomorphism $E_\alpha\to E_{\alpha'}$ is equivalent to a group automorphism
		$\Theta:L\to L$ fixing $g$ and satisfying
		\[
		F_{\alpha'}\circ\Theta=F_\alpha.
		\]
		
		Every automorphism of $L$ fixing $g$ has the form
		\begin{equation}\label{eq:theta-h-radford}
			\Theta(h)=g^c h^\zeta,
		\end{equation}
		for some $c\in\mathbb F_p$ and $\zeta\in(\mathbb Z/n\mathbb Z)^\times$. Indeed, the induced automorphism of
		$L/\langle g\rangle\cong C_n$ sends the class of $h$ to its $\zeta$-th power, and the remaining factor lies in $\langle g\rangle$. Since $p\mid n$, every $c$ is compatible with the relation $h^n=1$.
		
		Let $\bar\zeta\in\mathbb F_p^\times$ be the reduction of $\zeta$ modulo $p$. Evaluating
		$F_{\alpha'}\circ\Theta=F_\alpha$ on $h$ gives
		\[
		c-\bar\zeta\alpha'=-\alpha,
		\]
		hence
		\begin{equation}\label{eq:alpha-iso-zeta}
			\alpha'=\bar\zeta^{-1}(\alpha+c).
		\end{equation}
		Conversely, whenever \eqref{eq:alpha-iso-zeta} holds, the map
		$g\mapsto g$, $h\mapsto g^ch^\zeta$, $x\mapsto x$ extends to a Hopf isomorphism.
		
		The reduction map
		$(\mathbb Z/n\mathbb Z)^\times\to\mathbb F_p^\times$ is surjective when $p\mid n$: writing $n=p^mq$ with $(p,q)=1$, one may prescribe any nonzero residue modulo $p$ and simultaneously choose a residue prime to $q$ by the Chinese remainder theorem. Hence \eqref{eq:alpha-iso-zeta} is equivalent to
		\[
		\alpha'=u\alpha+v
		\]
		with $u\in\mathbb F_p^\times$ and $v\in\mathbb F_p$.
	\end{proof}
		\begin{rmk}\label{rmk:factor-preserving-Radford}
		If one restricts to factor-preserving isomorphisms, then $c=0$ in \eqref{eq:theta-h-radford}. The isomorphism criterion thus reduces to scaling by the image of $(\mathbb Z/n\mathbb Z)^\times$ in $\mathbb F_p^\times$.
	\end{rmk}

	\begin{cor}\label{cor:Radford-classes}
		Assume $p\mid n$.  All parameters in $\mathbb F_p$ belong to the same Hopf-isomorphism class as $0$.  More generally, two parameters are isomorphic precisely when they lie in the same affine $\mathbb F_p$-orbit.
	\end{cor}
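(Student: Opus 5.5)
This is a direct consequence of Theorem~\ref{thm:Radford-iso}, which holds under the same hypothesis $p\mid n$. The plan is to read off the orbit of $0$ under the affine group and then restate the general criterion.

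For the first assertion, take any $\alpha'\in\mathbb F_p$. Choose $u=1\in\mathbb F_p^\times$ and $v=\alpha'\in\mathbb F_p$. Then $\alpha'=u\cdot 0+v$, so Theorem~\ref{thm:Radford-iso} gives $E_0\cong E_{\alpha'}$. Concretely, the isomorphism from the proof of that theorem is $g\mapsto g$, $h\mapsto g^{c}h$, $x\mapsto x$. Here $\zeta=1$, and $c$ is chosen so that \eqref{eq:alpha-iso-zeta} yields $\alpha'=0+c$, that is, $c=\alpha'$. It is worth noting that this isomorphism is not factor-preserving when $c\neq 0$, in line with Remark~\ref{rmk:factor-preserving-Radford}. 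Conversely, the orbit of $0$ under $a\mapsto ua+v$ is exactly $\{v\mid v\in\mathbb F_p\}=\mathbb F_p$. So the isomorphism class of $E_0$, among the $E_\alpha$, is precisely $\{E_\alpha\mid \alpha\in\mathbb F_p\}$.

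For the second assertion, note that $\operatorname{AGL}_1(\mathbb F_p)$ is a group acting on $\K$. Since $\mathbb F_p\subseteq\K$ is the prime field, the maps $a\mapsto ua+v$ are bijections of $\K$ closed under composition and inversion. Hence the relation ``$\alpha'=u\alpha+v$ for some $u\in\mathbb F_p^\times$, $v\in\mathbb F_p$'' is an equivalence relation whose classes are the orbits. This is exactly the criterion of Theorem~\ref{thm:Radford-iso}, so the statement follows verbatim.

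There is no real obstacle. The only point needing care is already handled in Theorem~\ref{thm:Radford-iso}: the surjectivity of $(\mathbb Z/n\mathbb Z)^\times\to\mathbb F_p^\times$ and the compatibility of $h\mapsto g^ch^\zeta$ with $h^n=1$, which uses $p\mid n$. For the first assertion neither is needed, since $\zeta=1$ suffices.
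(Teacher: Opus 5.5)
Your proposal is correct and follows the paper's proof. You take $u=1$ in Theorem~\ref{thm:Radford-iso} to see that the orbit of $0$ is $\mathbb F_p$, and you observe that the second assertion is that theorem's criterion rephrased as orbits of $\operatorname{AGL}_1(\mathbb F_p)$.

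One small slip is in your closing remark: even with $\zeta=1$, the map $h\mapsto g^ch$ still requires $(g^ch)^n=g^{cn}=1$, which uses $p\mid n$ when $c\neq0$. This does not affect your argument, since you invoke the theorem rather than reprove it.
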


	\begin{proof}
		By Theorem~\ref{thm:Radford-iso}, translations by any $v\in\mathbb F_p$ are allowed. Taking $u=1$ sends $0$ to every $v\in\mathbb F_p$, proving the first assertion. The second assertion is exactly the isomorphism criterion in Theorem~\ref{thm:Radford-iso}.
	\end{proof}

\begin{cor}[Automorphism groups of the bicrossed products]\label{cor:aut-Ealpha}
	Assume $p\mid n$, and let $E_\alpha$ be the bicrossed product from
	Theorem~\ref{thm:Radford-cyclic}.  Then
	\[
	\Aut_{\mathrm{Hopf}}(E_\alpha)
	\cong
	\K^+
	\times
	\begin{cases}
		(\mathbb Z/n\mathbb Z)^\times, & \alpha\in\mathbb F_p,\\[3mm]
		\Ker\bigl((\mathbb Z/n\mathbb Z)^\times
		\longrightarrow
		(\mathbb Z/p\mathbb Z)^\times\bigr),
		& \alpha\notin\mathbb F_p.
	\end{cases}
	\]
	The factor $\K^+$ corresponds to the translations
	$x\mapsto x+a(1-g)$, $a\in\K$.
\end{cor}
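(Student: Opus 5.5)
The plan is to reduce everything to Lemma~\ref{lem:aut-H}, applied not to $R$ but to the third-type datum $(L,g,F_\alpha)$, where $L=C_n\times C_p$. By Theorem~\ref{thm:Radford-cyclic} we have $E_\alpha\cong\fA^3_L(g,F_\alpha)$ with $F_\alpha(h^ig^j)=j-i\alpha$. Lemma~\ref{lem:aut-H} then gives
\[
\Aut_{\mathrm{Hopf}}(E_\alpha)\cong\Aut_{g,F_\alpha}(L)\times\K_g^+ .
\]
Since $g^p=1$ in $L$, we have $\K_g=\K$, so the second factor is $\K^+$. It is realized by the translations $x\mapsto x+a(1-g)$ with $\Theta=\id$, which accounts for the factor $\K^+$ in the statement. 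Everything therefore comes down to computing $\Aut_{g,F_\alpha}(L)$.

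First I use the description from the proof of Theorem~\ref{thm:Radford-iso}: every automorphism of $L$ fixing $g$ has the form $\Theta_{\zeta,c}(h)=g^ch^\zeta$, with $c\in\mathbb F_p$ and $\zeta\in(\mathbb Z/n\mathbb Z)^\times$. Conversely, because $p\mid n$, each such pair defines an automorphism. The element $g^ch^\zeta$ has order $n$, and $L/\langle g\rangle$ is generated by the image of $h^\zeta$, so the map is surjective and hence bijective.

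Next I impose $F_\alpha\circ\Theta=F_\alpha$. It suffices to check this on the generators $g$ and $h$, and it holds automatically on $g$. On $h$ it reads $c-\bar\zeta\alpha=-\alpha$, that is,
\[
c=(\bar\zeta-1)\alpha .
\]
Here $\bar\zeta\in\mathbb F_p^\times$ denotes the reduction of $\zeta$ modulo $p$. Since $c$ must lie in $\mathbb F_p$, there are two cases.
\begin{itemize}
\item If $\alpha\in\mathbb F_p$, then every $\zeta$ is admissible, and $c$ is uniquely determined by $\zeta$.
\item If $\alpha\notin\mathbb F_p$, then $(\bar\zeta-1)\alpha\in\mathbb F_p$ forces $\bar\zeta=1$. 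Hence $c=0$, and $\zeta$ lies in the kernel of the reduction map $(\mathbb Z/n\mathbb Z)^\times\to(\mathbb Z/p\mathbb Z)^\times$.
\end{itemize}

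Finally I check that the assignment $\Theta\mapsto\zeta$ is a group isomorphism onto the stated group. It is injective because $c$ is a function of $\zeta$. It is a homomorphism because
\[
\Theta_{\zeta',c'}\circ\Theta_{\zeta,c}(h)=g^{c+\zeta c'}h^{\zeta\zeta'},
\]
so the $\zeta$-components multiply, and $(\mathbb Z/n\mathbb Z)^\times$ is abelian, so the order of composition is immaterial. Its image is the whole of $(\mathbb Z/n\mathbb Z)^\times$ in the first case and the kernel in the second, by the case analysis above.

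The main point needing care is the first step, namely that the automorphisms of $L$ fixing $g$ are exactly the $\Theta_{\zeta,c}$. This requires $p\mid n$, so that $g^ch^\zeta$ still has order $n$. I would confirm it directly rather than rely only on the sketch in the proof of Theorem~\ref{thm:Radford-iso}. The rest is routine bookkeeping built on Lemma~\ref{lem:aut-H}.
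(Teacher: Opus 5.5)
Your proposal is correct and follows essentially the same route as the paper. You reduce via Lemma~\ref{lem:aut-H} to computing $\Aut_{g,F_\alpha}(L)$, parametrize the automorphisms fixing $g$ by $(\zeta,c)$, and solve $c=(\bar\zeta-1)\alpha$ separately for $\alpha\in\mathbb F_p$ and $\alpha\notin\mathbb F_p$, while your composition formula additionally checks that $\Theta\mapsto\zeta$ is multiplicative, which the paper only asserts.
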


\begin{proof}
	By Theorem~\ref{thm:right-action},
	\[
	E_\alpha\cong \mathfrak A^3_{L}(g,F_\alpha),
	\qquad
	L=C_n\times C_p,
	\]
	where
	\[
	F_\alpha(h^i g^j)=j-i\alpha.
	\]
	By Lemma~\ref{lem:aut-H},
	\[
	\Aut_{\mathrm{Hopf}}\bigl(\mathfrak A^3_L(g,F_\alpha)\bigr)
	\cong
	\Aut_{g,F_\alpha}(L)\times \K^+,
	\]
	where
	\[
	\Aut_{g,F_\alpha}(L)
	=
	\{\sigma\in\Aut(L)\mid \sigma(g)=g,\ F_\alpha\circ\sigma=F_\alpha\}.
	\]
	Since $L=C_n\times C_p$ and $g$ generates the $C_p$-factor, every $\sigma\in\Aut(L)$ satisfying $\sigma(g)=g$ has the form
	\[
	\sigma(g)=g,\qquad
	\sigma(h)=h^\zeta g^c,
	\]
	for some $\zeta\in(\mathbb Z/n\mathbb Z)^\times$ and $ c\in\mathbb F_p$. Conversely, since $p\mid n$, we have $(g^c)^n=1$, and since
	$\zeta\in(\mathbb Z/n\mathbb Z)^\times$, every such pair $(\zeta,c)$ defines an automorphism of $L$ fixing $g$.

	Let $\sigma\in\Aut_{g,F_\alpha}(L)$.  Then $F_\alpha\circ\sigma=F_\alpha$. In particular, taking $i=1$ and $j=0$ gives $F_\alpha(\sigma(h))=F_\alpha(h)$. Now $\sigma(h)=h^\zeta g^c$, so
 
	\[
	F_\alpha(\sigma(h))=c-\zeta\alpha,
	\]
	while $F_\alpha(h)=-\alpha$.  Hence
	\begin{equation}\label{eq:aut_E}
	c=(\zeta-1)\alpha.
    \end{equation}
	Here $\zeta$ is reduced modulo $p$, so that $\zeta-1$ is viewed
	as an element of $\mathbb F_p$.
	
	Suppose first that $\alpha\in\mathbb F_p$.  Then
	$(\zeta-1)\alpha\in\mathbb F_p$ for every
	$\zeta\in(\mathbb Z/n\mathbb Z)^\times$.  Thus, for every such
	$\zeta$, equation~\eqref{eq:aut_E} determines a unique
	\[
	c=(\zeta-1)\alpha\in\mathbb F_p.
	\]
	Conversely, every element of $\Aut_{g,F_\alpha}(L)$ arises in this
	way.  Hence the projection
	\[
	\Aut_{g,F_\alpha}(L)\longrightarrow
	(\mathbb Z/n\mathbb Z)^\times,
	\qquad
	\sigma\longmapsto\zeta,
	\]
	is an isomorphism, and therefore
	\[
	\Aut_{g,F_\alpha}(L)
	\cong
	(\mathbb Z/n\mathbb Z)^\times.
	\]
	
	Now suppose that $\alpha\notin\mathbb F_p$.  Since
	$c\in\mathbb F_p$, equation~\eqref{eq:aut_E} implies
	\[
	(\zeta-1)\alpha\in\mathbb F_p.
	\]
	If $\zeta-1\neq0$ in $\mathbb F_p$, then $\zeta-1$ is invertible
	in $\mathbb F_p$, and consequently
	\[
	\alpha=\frac{c}{\zeta-1}\in\mathbb F_p,
	\]
	a contradiction.  Thus
	\[
	\zeta\equiv1\pmod p.
	\]
	Equation~\eqref{eq:aut_E} then gives $c=0$.
	
	Conversely, let
	\[
	\zeta\in(\mathbb Z/n\mathbb Z)^\times
	\quad\text{with}\quad
	\zeta\equiv1\pmod p,
	\]
	and define
	\[
	\sigma(g)=g,\qquad \sigma(h)=h^\zeta.
	\]
	Then for every $i,j$,
	\[
	F_\alpha(\sigma(h^i g^j))
	=
	F_\alpha(h^{\zeta i}g^j)
	=
	j-\zeta i\alpha.
	\]
	Since $\zeta\equiv1\pmod p$, we have
	$\zeta i\equiv i\pmod p$.  Hence in the characteristic-$p$ field $\K$,
	\[
	j-\zeta i\alpha
	=
	j-i\alpha.
	\]
	Thus $F_\alpha\circ\sigma=F_\alpha$, so
	\[
	\sigma\in\Aut_{g,F_\alpha}(L).
	\]
	Therefore
	\[
	\Aut_{g,F_\alpha}(L)
	\cong
	\{\zeta\in(\mathbb Z/n\mathbb Z)^\times
	\mid \zeta\equiv1\pmod p\}.
	\]
	This subgroup is precisely
	\[
	\Ker\bigl((\mathbb Z/n\mathbb Z)^\times
	\longrightarrow
	(\mathbb Z/p\mathbb Z)^\times\bigr).
	\]
	
	Combining the two cases with Lemma~\ref{lem:aut-H} gives
	\[
	\Aut_{\mathrm{Hopf}}(E_\alpha)
	\cong
	\K^+
	\times
	\begin{cases}
		(\mathbb Z/n\mathbb Z)^\times, & \alpha\in\mathbb F_p,\\[3mm]
		\Ker\bigl((\mathbb Z/n\mathbb Z)^\times
		\longrightarrow
		(\mathbb Z/p\mathbb Z)^\times\bigr),
		& \alpha\notin\mathbb F_p.
	\end{cases}
	\]
	The $\K^+$-factor is precisely the group of translations
	$x\mapsto x+a(1-g)$, $a\in\K$.
\end{proof}

	\begin{pro}[General exact factorizations of $E_\alpha$]\label{pro:general-factorizations}
		Let $E_\alpha$ be as in Theorem~\ref{thm:Radford-cyclic}, and write $n=p^m q$ with $m\ge0$ and $p\nmid q$. 
		The nontrivial exact factorizations of $E_\alpha$ are in bijection with choices of exact group factorizations
		\[
		C_{p^m}\times C_p=P_B\times P_\Gamma,\qquad
		C_q=Q_B\times Q_\Gamma,
		\]
		such that $g\in P_B$ and $P_\Gamma\times Q_\Gamma\neq\{1\}$. 
		For each such choice, the corresponding exact factorization is $E_\alpha=BT$, where
		\[
		B=\fA^3_{P_B\times Q_B}\bigl(g,F_\alpha|_{P_B\times Q_B}\bigr),
		\qquad
		T=\K[P_\Gamma\times Q_\Gamma].
		\]
	\end{pro}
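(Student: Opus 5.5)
The plan is to reduce everything to Theorem~\ref{thm:factorization}, applied to the third-type datum $(L,g,F_\alpha)$ with $L=C_n\times C_p$. By Theorem~\ref{thm:Radford-cyclic} we have $E_\alpha\cong\fA^3_L(g,F_\alpha)$. Theorem~\ref{thm:factorization} then says that nontrivial exact factorizations of $E_\alpha$, oriented so that the rank-one factor comes first, are in bijection with exact group factorizations $L=G_B\Gamma$ satisfying $G_B\cap\Gamma=\{1\}$, $g\in G_B$ and $\Gamma\ne\{1\}$. The corresponding subalgebras are $B=\fA^3_{G_B}(g,F_\alpha|_{G_B})$ and $T=\K[\Gamma]$. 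So it suffices to show that such group factorizations of the abelian group $L$ are exactly the data listed in the statement.

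Write $L=P\times Q$ with $P=C_{p^m}\times C_p$, its Sylow $p$-subgroup, and $Q=C_q$, its $p'$-Hall part. Since $L$ is abelian, every subgroup $K\le L$ splits uniquely as $K=(K\cap P)\times(K\cap Q)$, where $K\cap P$ is its Sylow $p$-subgroup and $K\cap Q$ its $p'$-part.

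Apply this to $G_B$ and $\Gamma$, writing $G_B=P_B\times Q_B$ and $\Gamma=P_\Gamma\times Q_\Gamma$. Then $G_B\cap\Gamma=\{1\}$ holds if and only if both $P_B\cap P_\Gamma=\{1\}$ and $Q_B\cap Q_\Gamma=\{1\}$, because the intersection splits the same way. Similarly $G_B\Gamma=L$ holds if and only if $P_BP_\Gamma=P$ and $Q_BQ_\Gamma=Q$. For this, use the order count $|G_B||\Gamma|=|L|$ together with the fact that the $p$-parts and $p'$-parts multiply separately. Since $g$ has order $p$, we have $g\in P$, so $g\in G_B$ is equivalent to $g\in P_B$. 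Finally, $\Gamma\neq\{1\}$ is exactly the condition $P_\Gamma\times Q_\Gamma\ne\{1\}$.

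This gives the bijection. The subgroups $P_B,P_\Gamma,Q_B,Q_\Gamma$ are recovered from $(G_B,\Gamma)$ by intersecting with $P$ and $Q$. Conversely, $(G_B,\Gamma)$ is recovered from them by taking direct products. Substituting these into the formulas for $B$ and $T$ from Theorem~\ref{thm:factorization} yields the stated forms. Note that $C_q=Q_B\times Q_\Gamma$ is automatically a direct decomposition because the group is abelian.

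I expect the main subtlety to be the splitting of the factorization condition into its $p$- and $p'$-components. The cleanest route is by orders: $|G_B\Gamma|=|G_B||\Gamma|/|G_B\cap\Gamma|$, and each factor decomposes into a $p$-part and a $p'$-part. A secondary point is the meaning of "nontrivial": it should be read as "by proper Hopf subalgebras, with the rank-one factor written first", as in Theorem~\ref{thm:factorization}.
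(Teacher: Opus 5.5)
Your proposal is correct and follows essentially the same route as the paper: identify $E_\alpha$ with $\fA^3_{C_n\times C_p}(g,F_\alpha)$, apply Theorem~\ref{thm:factorization}, and split the resulting group factorization of the abelian group into its $p$-primary and $p'$-primary components, using that $g$ lies in the $p$-part. Your explicit check that the intersection and product conditions split componentwise is slightly more detailed than the paper's version, which simply asserts that the decomposition holds.
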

	\begin{proof}
		By Theorems~\ref{thm:Radford-cyclic} and~\ref{thm:factorization}, exact factorizations of $E_\alpha$ by proper Hopf subalgebras correspond to nontrivial exact factorizations
		\[
		C_n\times C_p=G_B\,\Gamma,\qquad G_B\cap\Gamma=\{1\},\qquad g\in G_B.
		\]
		Since $C_n\times C_p$ is abelian, such a factorization is a direct product decomposition: $G_B$ and $\Gamma$ are complementary direct factors.
		
		Write
		\[
		C_n\times C_p\cong C_{p^m}\times C_p\times C_q,
		\]
		where $C_{p^m}\times C_p$ is the $p$-primary component and $C_q$ is the $p'$-primary component. Decomposing $G_B$ and $\Gamma$ according to primary components, we obtain
		\[
		G_B=P_B\times Q_B,\qquad \Gamma=P_\Gamma\times Q_\Gamma
		\]
		with
		\[
		P_B=G_B\cap(C_{p^m}\times C_p),\quad Q_B=G_B\cap C_q,
		\]
		and similarly for $P_\Gamma,Q_\Gamma$. Thus the group factorization is the direct product of
		\[
		C_{p^m}\times C_p=P_B\times P_\Gamma,\qquad C_q=Q_B\times Q_\Gamma.
		\]
		Since $g$ lies in the $p$-primary component, the condition $g\in G_B$ is equivalent to $g\in P_B$. Nontriviality is equivalent to $P_\Gamma\times Q_\Gamma\neq\{1\}$.
		
		For each such decomposition, Theorem~\ref{thm:factorization} yields the corresponding Hopf subalgebras
		\[
		B=\fA^3_{P_B\times Q_B}\bigl(g,F_\alpha|_{P_B\times Q_B}\bigr),
		\qquad
		T=\K[P_\Gamma\times Q_\Gamma].
		\]
		This establishes the stated bijection.
	\end{proof}
	
	\begin{cor}\label{cor:p-coprime-factorizations}
		If $p\nmid n$, then $\alpha=0$ by Theorem~\ref{thm:Radford-cyclic}. In this case the data reduce to $P_B=\langle g\rangle\cong C_p$, $P_\Gamma=1$, and exact factorizations
		\[
		C_n=Q_B\times Q_\Gamma,\qquad Q_\Gamma\neq1.
		\]
		These are parameterized by divisors $d>1$ of $n$ with $\gcd(d,n/d)=1$; here $Q_\Gamma$ and $Q_B$ are the unique subgroups of orders $d$ and $n/d$, respectively. The corresponding factorizations are
		\[
		E_0=\fA^3_{\langle g\rangle\times Q_B}\bigl(g,F_0|_{\langle g\rangle\times Q_B}\bigr)\cdot \K[Q_\Gamma].
		\]
	\end{cor}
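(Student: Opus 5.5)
The plan is to specialize Proposition~\ref{pro:general-factorizations} to the case $m=0$, that is, $p\nmid n$. First, Theorem~\ref{thm:Radford-cyclic}(1) forces $\alpha=0$. Then $E_0\cong\fA^3_{C_n\times C_p}(g,F_0)$ with $F_0(h^ig^j)=j$. With $m=0$ the $p$-primary part of $C_n\times C_p$ is just $C_p=\langle g\rangle$, and the $p'$-primary part is $C_q=C_n$.

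\textbf{The $p$-primary factorization.} The proposition requires an exact factorization $C_p=P_B\times P_\Gamma$ with $g\in P_B$. Since $g$ generates $C_p$, this forces $P_B=\langle g\rangle$ and $P_\Gamma=1$, exactly as in Corollary~\ref{cor:Radford-no-factorization}. Nontriviality $P_\Gamma\times Q_\Gamma\ne 1$ then reduces to $Q_\Gamma\ne1$. So the factorizations of $E_0$ correspond to direct decompositions $C_n=Q_B\times Q_\Gamma$ with $Q_\Gamma\neq1$.

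\textbf{Parametrizing the decompositions.} Next we parametrize these decompositions of the cyclic group $C_n$. Subgroups of $C_n$ are exactly the unique subgroups $C_d$, one for each divisor $d\mid n$. Two subgroups $C_d$ and $C_e$ satisfy $C_d\cap C_e=C_{\gcd(d,e)}$ and $C_dC_e=C_{\operatorname{lcm}(d,e)}$. Hence they form an exact factorization of $C_n$ if and only if $\gcd(d,e)=1$ and $de=n$. Setting $d=|Q_\Gamma|$, this is the condition $e=n/d$ with $\gcd(d,n/d)=1$, and $d>1$ encodes $Q_\Gamma\ne1$. The assignment $Q_\Gamma\mapsto d$ is injective because each subgroup of $C_n$ is determined by its order. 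It is surjective since every such $d$ gives the pair $(C_{n/d},C_d)$.

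\textbf{Identifying the Hopf subalgebras.} Finally, we read off the factors from Proposition~\ref{pro:general-factorizations}:
\[
B=\fA^3_{\langle g\rangle\times Q_B}\bigl(g,F_0|_{\langle g\rangle\times Q_B}\bigr),
\qquad
T=\K[Q_\Gamma].
\]
This is the displayed factorization.

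No step is a real obstacle. The only point needing care is the elementary group-theoretic fact that exact factorizations of a cyclic group correspond to coprime complementary divisors. It follows from the gcd/lcm description of subgroup intersections and products above.
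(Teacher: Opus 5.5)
Your proof is correct and follows essentially the same route as the paper. The paper likewise specializes Proposition~\ref{pro:general-factorizations} to $m=0$, notes that $g\in P_B$ forces $P_B=\langle g\rangle$ and $P_\Gamma=1$, and cites the standard description of direct factors of a cyclic group. You simply spell out that last step explicitly, using the gcd/lcm description of intersections and products of subgroups of $C_n$.
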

	
	\begin{proof}
		When $p\nmid n$, the $p$-primary part is just $C_p$, so $g\in P_B$ forces $P_B=C_p$ and $P_\Gamma=1$. The rest follows from Proposition~\ref{pro:general-factorizations} and the standard description of direct factors of a cyclic group.
	\end{proof}
	
	\begin{cor}\label{cor:Radford-prime-factorizations}
		Let $n$ be a prime, and write $C_n=\langle h\rangle$ as before.
		\begin{enumerate}
			\item If $n\neq p$, then $\alpha=0$ and the unique nontrivial factorization of $E_0$ is
			\[
			E_0=R\cdot \K[C_n].
			\]
			\item If $n=p$, then for every $\alpha\in\K$, the nontrivial factorizations of $E_\alpha$ are
			\[
			E_\alpha=R\cdot \K[\langle hg^c\rangle],\qquad c\in\mathbb F_p,
			\]
			where in both cases $R=\fA^3_{C_p}(g,f)$ with $f(g^j)=j$.
		\end{enumerate}
	\end{cor}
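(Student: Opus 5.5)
The plan is to specialize Proposition~\ref{pro:general-factorizations} (equivalently Theorem~\ref{thm:factorization}) to the group $L=C_n\times C_p$ with $n$ prime. Recall that $E_\alpha\cong\fA^3_L(g,F_\alpha)$. Nontrivial exact factorizations of $E_\alpha$ correspond to pairs of subgroups $(G_B,\Gamma)$ of $L$ with $G_B\cap\Gamma=\{1\}$, $G_B\Gamma=L$, $g\in G_B$ and $\Gamma\neq\{1\}$. The rank-one factor is $\fA^3_{G_B}(g,F_\alpha|_{G_B})$, and the other factor is $\K[\Gamma]$. So the whole task reduces to enumerating such subgroup pairs. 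After that, one identifies the rank-one factor with $R$ whenever $G_B=\langle g\rangle$.

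\textbf{Case $n\neq p$.} Theorem~\ref{thm:Radford-cyclic} forces $\alpha=0$. We then apply Corollary~\ref{cor:p-coprime-factorizations}. The divisors $d>1$ of the prime $n$ with $\gcd(d,n/d)=1$ consist of $d=n$ alone. This gives $Q_\Gamma=C_n$, $Q_B=1$, and hence $G_B=\langle g\rangle$, $\Gamma=\langle h\rangle$. The restriction $F_0|_{\langle g\rangle}$ sends $g^j\mapsto j$, so the rank-one factor is exactly $R=\fA^3_{C_p}(g,f)$. The factorization is therefore $E_0=R\cdot\K[C_n]$, and it is unique.

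\textbf{Case $n=p$.} Here $L=C_p\times C_p$ has order $p^2$. We need $|G_B||\Gamma|=p^2$ with $g\in G_B$ and $\Gamma\ne1$, which forces $|G_B|=|\Gamma|=p$, so $G_B=\langle g\rangle$. The complement $\Gamma$ must then be a subgroup of order $p$ different from $\langle g\rangle$. The $p+1$ subgroups of order $p$ are $\langle g\rangle$ and $\langle hg^c\rangle$ for $c\in\mathbb F_p$. These $p$ subgroups are pairwise distinct and each meets $\langle g\rangle$ trivially, since $(hg^c)^i\in\langle g\rangle$ forces $p\mid i$. This yields exactly $p$ factorizations $E_\alpha=R\cdot\K[\langle hg^c\rangle]$, again with $F_\alpha|_{\langle g\rangle}=f$ making the rank-one factor $R$. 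The constraint holds for every $\alpha\in\K$, because $p\mid n$ in Theorem~\ref{thm:Radford-cyclic}.

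The only point needing care is the count and identification of order-$p$ subgroups in $C_p\times C_p$. One must check that each complement to $\langle g\rangle$ is generated by a unique element of the form $hg^c$: normalize a generator $h^ig^j$ with $i\ne0$ by raising it to $i^{-1}\bmod p$. One must also confirm that the identification of the rank-one factor with $R$ is independent of $\alpha$, which holds since $F_\alpha(g^j)=j$. I expect no real obstacle beyond this bookkeeping.
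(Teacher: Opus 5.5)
Your proposal is correct and follows essentially the same route as the paper. For $n\neq p$ you apply Corollary~\ref{cor:p-coprime-factorizations} with the single divisor $d=n$. For $n=p$ you reduce via the factorization theorem to complements of $\langle g\rangle$ in $C_p\times C_p$, namely the $p$ lines $\langle hg^c\rangle$, and use $F_\alpha|_{\langle g\rangle}=f$ to identify the rank-one factor with $R$. Your extra bookkeeping, namely the order count forcing $|G_B|=|\Gamma|=p$ and the normalization of generators $h^ig^j$, makes explicit what the paper states more briefly.
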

	
	\begin{proof}
		If $n\neq p$, then $n$ is coprime to $p$, so Corollary~\ref{cor:p-coprime-factorizations} applies. Since $n$ is prime, its only divisor $d>1$ is $d=n$, giving $Q_\Gamma=C_n$ and $Q_B=1$. The unique nontrivial factorization is therefore
		\[
		E_0=R\cdot\K[C_n],
		\]
		where $R=\fA^3_{C_p}(g,f)$ with $f(g^j)=j$.
		
		Now suppose $n=p$. Then $m=1$ and $q=1$, so Proposition~\ref{pro:general-factorizations} reduces to exact factorizations
		\[
		C_p\times C_p=P_B\times P_\Gamma,\qquad g\in P_B,\qquad P_\Gamma\neq1.
		\]
		Both factors must have order $p$, so they are one-dimensional subspaces of $C_p\times C_p$. Since $g\in P_B$ and $g\neq1$, we get $P_B=\langle g\rangle$. The possible complements are exactly the lines different from $\langle g\rangle$, namely
		\[
		P_\Gamma=\langle hg^c\rangle,\qquad c\in\mathbb F_p,
		\]
		which gives $p$ choices.
		
		Finally, $F_\alpha(g^j)=j$ for all $j$, independent of $\alpha$, so $F_\alpha|_{\langle g\rangle}=f$. Hence $B=\fA^3_{\langle g\rangle}(g,f)=R$, and the corresponding factorizations are
		\[
		E_\alpha=R\cdot\K[\langle hg^c\rangle],\qquad c\in\mathbb F_p.
		\]
	\end{proof}

	\section*{Acknowledgments}
	
     This work was partially supported by the National Natural Science Foundation of China (Grant No.~12401041). The author thanks the authors of the works cited below for their inspiring contributions.

\end{document}